\theoremstyle{plain}
\newtheorem{thm}{Theorem}[section]
\newtheorem{lem}[thm]{Lemma}
\newtheorem{defn}[thm]{Definition}
\newtheorem{prop}[thm]{Proposition}
\newtheorem{cor}[thm]{Corollary}
\newtheorem{rem}[thm]{Remark}
\newcommand{\C}{{\mathbb C}}
\newcommand{\R}{{\mathbb R}}
\newcommand{\N}{{\mathbb N}}
\newcommand{\D}{{\mathbb D}}
\newcommand{\bD}{{\mathbb T}}
\newcommand{\K}{{\mathcal K}}
\newcommand{\PP}{\mathbb{P}}
\newcommand{\PN}{{\mathcal P}}
\renewcommand{\L}{\mathcal L}
\newcommand{\B}{{\mathcal B}}
\newcommand{\e}{\varepsilon}
\newcommand{\p}{\partial}
\newcommand{\z}{\zeta}
\title{Bilinear forms on  weighted Besov spaces}
\author{Carme Cascante and Joan F\`abrega}
\address{Carme Cascante and Joan F\`abrega: Dept.\ Matem\`atica Aplicada i An\`alisi, Universitat  de Barcelona, Gran Via 585, 08071 Barcelona, Spain}
\email{cascante@ub.edu,\quad joan$_{-}$fabrega@ub.edu}
\keywords{Bilinear forms, weighted Besov spaces, Hankel operators, weak products.}
\subjclass[2010]{47A07, 30H25, 47B35}
\date{\today}
\thanks{Partially supported by  DGICYT Grant MTM2011-27932-C02-01  and DURSI Grant 2009SGR 1303.}
\begin{document}

\begin{abstract} 
We compute the norm of some bilinear forms on products of weighted Besov spaces 
in terms of the norm of their symbol in a space of pointwise multipliers 
defined in terms of Carleson measures.
\end{abstract}

\maketitle

\section{Introduction}\label{sec:intro}
The object of this paper is the study of some bilinear forms on products of 
weighted holomorphic Besov spaces, and their  relationship with Hankel operators and weak products.

If $\varphi$ and $\psi$ are measurable functions on $\D$ (on $\bD$ if $t=0$) 
such that $\varphi\overline{\psi}\in L^1(d\nu_t)$, let 
\begin{equation}\label{eqn:pars}
\langle\langle \varphi,\psi\rangle\rangle_t
:=\int_\D \varphi\overline \psi d\nu_t,\quad t>0,\qquad \langle\langle
 \varphi,\psi\rangle\rangle_0
:=\int_\bD \varphi\overline \psi d\sigma.
\end{equation}
Here, for $t>0$, we write $d\nu_t(z):= t (1-|z|^2)^{t-1}d\nu(z)$, where $d\nu$ is 
the normalized Lebesgue measure on the unit disk $\D$, and $d\sigma$ 
denotes the normalized Lebesgue measure on the circle $\bD$.

We also consider the pairings 
\begin{equation}\label{eqn:parw}
\langle h,b\rangle_t:=\lim_{r\to 1^{-}}\langle\langle h(rz), b(rz)\rangle\rangle_t,
\end{equation}
whose domain is the subset of $H\times H$ for which the limit exists.
In particular, if either $b\in H\cap L^1(d\nu_t),\,t>0$, or $b\in H^1,\,t=0$, then we have that for any $h\in H(\overline\D)$, 
$\langle h,b\rangle_t=\langle\langle h,b\rangle\rangle_t$.

In this paper we compute the norm of the bilinear form $\Lambda_b(f,g):=\langle fg,b\rangle_t$  
defined on products of weighted Besov spaces with weights of B\'ekoll\'e type,
in terms of the norm of $b$ in a space of pointwise multipliers related to these Besov spaces.

Let us precise these results. 
Throughout the paper we denote by $H:=H(\D)$ (resp. $H(\overline \D)$) the space of holomorphic functions on the unit disk 
$\D$ (resp. on a neighborhood of $\overline\D$).

If $1<p<\infty$ and $t>0$, the B\'ekoll\'e class $\B_{p,t}$ consists of non-negative functions $\theta\in L^1(d\nu_t)$
such that the measures $d\mu_t:=\theta d\nu_t$  and $ d\mu^\prime_t:=\theta^{-p'/p}d\nu_t$ satisfy the so called $\B_p$ condition
\begin{align*}
\B_{p,t}(\theta):&=\sup_{\substack{z\in \D}}
\left(\frac{\mu_t(T_z)}{\nu_t(T_z)}\right)^{1/p}
\left(\frac{\mu^\prime_t(T_z)}{\nu_t(T_z)}\right)^{1/p'}<\infty,
\end{align*}
where $p'$ is the conjugate exponent of $p$, 
$$
T_z:=\{w\in\D:\,|1-w\overline z/|z||<2(1-|z|^2)\},\,\, z\ne 0,\quad \text{   and }\quad T_0:=\D.
$$

If $1\le p<\infty$, $s\in\R$, $\theta\in \B_{p,t}$ and $d\mu_t=\theta d\nu_t$, then the  Besov space $B^p_s(\mu_t)$ 
consists of holomorphic functions $f$ on $\D$ satisfying 
$$
\|f\|_{B^p_s(\mu_t)}^p:=\int_\D \left|(1+R)^{k_s} f(z)\right|^p\,(1-|z|^2)^{(k_s-s)p}\,d\mu_t(z)<\infty.
$$
Here, $k_s:=\min\{k\in \N:\,k>s\}$ and $R$ denotes the radial derivative.

 As it happens for the unweighted case, if we replace $k_s$ by another non-negative 
 integer $k>s$ we obtain 
equivalent norms  (see for instance \cite[Section 3]{Ca-Or1}). In particular, 
if $s<0$, then we can take $k=0$, and thus we have that
$B^p_{s}(\mu_t)=H\cap L^p(\mu_{t-sp})$.

The classical unweighted Besov space $B^p_s$ corresponds to $B^p_s(\mu_0)$, 
where $d\mu_0(z)=\frac{d\nu(z)}{1-|z|^2}$.  
Observe that this space is already included in the scale of weighted Besov spaces we have considered, simply because 
 $B^p_s(\mu_0)=B^p_{s+t/p}(\nu_t)$ for any  $t>0$. 
In order to recover some well-known results for the unweighted case and the pairing
 $\langle\cdot,\cdot\rangle_0$, we write $\B_{p,0}=\{1\}$.

\begin{defn}
The space $CB^p_s(\mu_t)$ consists of the functions $g\in B^p_s(\mu_t)$ for which 
$$
\|g\|_{CB^p_s(\mu_t)}:=\sup_{0\ne f \in B^p_s(\mu_t)} 
\frac{\|f(1+R)^{k_s}g\|_{B^p_{s-k_s}(\mu_t)}}{\|f\|_{B^p_{s}(\mu_t)}}
$$
is finite.
\end{defn}

The space $CB^p_s(\mu_t)$ can be described either in terms of Carleson masures or in terms 
of pointwise multipliers. Indeed, 

\begin{enumerate}
\item \,$b\in CB^p_s(\mu_t)$ if and only if the measure 
$$
d\mu_b(z):=|(1+R)^{k_s} b(z)|^p\,(1-|z|^2)^{(k_s-s)p}\,d\mu_t(z),
$$ 
is a Carleson measure for $B^p_s(d\mu_t)$,
that is, if and only if   the embedding $B^p_s(\mu_t)\subset L^p(d\mu_b)$ is continuous.

\item $b\in CB^p_s(\mu_t)$ if and only if $(1+R)^{k_s}b\in Mult(B^p_s(\mu_t)\to B^p_{s-k_s}(\mu_t)),$
where $Mult(B^p_s(\mu_t)\to B^p_{s-k_s}(\mu_t))$ denotes the space of pointwise multipliers from 
$B^p_s(\mu_t)$ to $B^p_{s-k_s}(\mu_t)$.
\end{enumerate}

When $t=0$, that is for the unweighted case,  we simply denote the space $CB^p_s(\mu_0)$ by $CB^p_s$.

The spaces $CB^p_s$ appear naturally when dealing with some problems on operators on $B^p_s$. For instance, it is well known that 
$Mult(B^p_s)=H^\infty\cap CB^p_s$. 
In some special cases it is not difficult to give a full description of the space $CB^p_s$.
If $s>1/p$, then  $B^p_s$ is a multiplicative algebra and  $CB^p_s=B^p_s$. 
If $s<0$, then it is easy to check that $CB^p_s$ coincides with the Bloch space $B^\infty_0$.
Different type of characterizations of the spaces $CB^p_s$, for $0\le s\le 1/p$, have been obtained by several authors 
(see for instance \cite{Ste},  \cite{Ar-Ro-Saw}, \cite{Ro-Wu}, \cite{Ar-Ro-Saw-W2}, \cite{Ca-Or1}, \cite{Ca-Or2}  
 and the references therein).

One of the main results of this paper is the following theorem.

\begin{thm} \label{thm:BF}
Let $1<p<\infty$, $0< s<1$, $t\ge 0$ and $\theta \in \B_{p,t}$. 
For $b\in H(\D)$ the following assertions are equivalent:
\begin{enumerate}
	\item \label{item:BF1} $b\in CB^p_{s}(\mu_t)$.

	\item \label{item:BF3} $\displaystyle{\Gamma_1(b)
	:=\sup_{0\ne f,g\in H(\overline \D)}\frac{|\langle\langle |fg|,|(1+R)b|\rangle\rangle_{t+1}|}
	{\|f\|_{B^p_{s}(\mu_t)}\|g\|_{B^{p'}_{-s}(\mu^\prime_t)}}<\infty}$.

	\item \label{item:BF2} $\displaystyle{
\Gamma_2(b):=\sup_{0\ne f,g\in H(\overline \D)}
	\frac{|\langle fg,b\rangle_t|} {\|f\|_{B^p_{s}(\mu_t)}\|g\|_{B^{p'}_{-s}(\mu^\prime_t)}}<\infty}$.
\end{enumerate}

Moreover,  
$
\|b\|_{CB^p_s(\mu_t)}\approx \Gamma_1(b)
\approx \Gamma_2(b).
$
\end{thm}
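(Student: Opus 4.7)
The plan is to establish the cyclic chain (i)$\Rightarrow$(ii)$\Rightarrow$(iii)$\Rightarrow$(i), producing in succession the norm bounds $\|b\|_{CB^p_s(\mu_t)}\gtrsim\Gamma_1(b)\gtrsim\Gamma_2(b)\gtrsim\|b\|_{CB^p_s(\mu_t)}$. The crucial simplification is that $s\in(0,1)$ forces $k_s=1$ and $k_{s-1}=k_{-s}=0$, so all the Besov norms in play reduce to plain weighted $L^p$ integrals. For (i)$\Rightarrow$(ii), using $d\nu_{t+1}\asymp(1-|z|^2)\,d\nu_t$, I write
\begin{equation*}
|fg|\,|(1+R)b|\,d\nu_{t+1}\asymp\bigl[|f|\,|(1+R)b|\,(1-|z|^2)^{1-s}\theta^{1/p}\bigr]\bigl[|g|\,(1-|z|^2)^s\theta^{-1/p}\bigr]d\nu_t
\end{equation*}
and apply H\"older with exponents $p,p'$: the first bracket's $L^p(d\nu_t)$ norm equals $\|f\|_{L^p(d\mu_b)}\le\|b\|_{CB^p_s(\mu_t)}\|f\|_{B^p_s(\mu_t)}$ by the Carleson embedding in (i), and the second's $L^{p'}(d\nu_t)$ norm is exactly $\|g\|_{B^{p'}_{-s}(\mu_t')}$. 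This yields $\Gamma_1(b)\lesssim\|b\|_{CB^p_s(\mu_t)}$.

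For (ii)$\Rightarrow$(iii) the starting point is the Taylor-coefficient identity
\begin{equation*}
\langle h,b\rangle_t=\tfrac{1}{t+1}\int_\D h(z)\,\overline{(1+R)b(z)}\,d\nu_{t+1}(z)+\tfrac{t}{t+1}\langle h,b\rangle_{t+1},\qquad h\in H(\overline\D),
\end{equation*}
which follows from the moment relation $(t+1)\mu_t(n)=(n+t+1)\mu_{t+1}(n)$ with $\mu_t(n):=\int|z|^{2n}d\nu_t$. For $t=0$ only the first term survives and $\Gamma_2(b)\le\Gamma_1(b)$ is immediate; for $t>0$ I iterate, noting that the remainder coefficient $\prod_{k=0}^{m-1}\tfrac{t+k}{t+k+1}=\tfrac{t}{t+m}\to 0$, to obtain $\langle fg,b\rangle_t=\sum_{k\ge 0}c_k\int fg\,\overline{(1+R)b}\,d\nu_{t+k+1}$. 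Using $d\nu_{t+k+1}=\tfrac{t+k+1}{t+1}(1-|z|^2)^k\,d\nu_{t+1}$ and summing the series inside the integral reduces this to a single integral against a weight $W(z)$ which is bounded on $\{|z|\ge 1/2\}$ (geometric summation of $(1-|z|^2)^k$) and has only a logarithmic singularity at $z=0$. The outer region is directly controlled by $\Gamma_1(b)\|f\|_{B^p_s(\mu_t)}\|g\|_{B^{p'}_{-s}(\mu_t')}$; the singular inner region is handled by combining the compact-set pointwise estimate $|fg(z)|\lesssim\|f\|_{B^p_s(\mu_t)}\|g\|_{B^{p'}_{-s}(\mu_t')}$ with $\|(1+R)b\|_{L^\infty(|z|<1/2)}\lesssim\Gamma_1(b)$ (via Cauchy estimates from $\int|(1+R)b|d\nu_{t+1}\lesssim\Gamma_1(b)$, obtained by testing $\Gamma_1$ against $f=g=1$). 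A density approximation $b_\rho(z):=b(\rho z)$, $\rho\nearrow 1$, extends the estimate to general $b\in H(\D)$.

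For (iii)$\Rightarrow$(i), the deep direction, one must show that $d\mu_b:=|(1+R)b|^p(1-|z|^2)^{(1-s)p}d\mu_t$ is Carleson for $B^p_s(\mu_t)$, equivalently that pointwise multiplication by $(1+R)b$ maps $B^p_s(\mu_t)$ boundedly into $B^p_{s-1}(\mu_t)$. Dualizing $B^p_{s-1}(\mu_t)$ against $B^{p'}_{1-s}(\mu_t')$ through the natural $\langle\cdot,\cdot\rangle_t$ pairing, this reduces to an estimate $|\int f\,(1+R)b\cdot\overline{G}\,d\nu_t|\lesssim\Gamma_2(b)\,\|f\|_{B^p_s(\mu_t)}\|G\|_{B^{p'}_{1-s}(\mu_t')}$ for $f\in B^p_s(\mu_t)$ and $G\in B^{p'}_{1-s}(\mu_t')$. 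The strategy is to reverse the integration-by-parts identity of Step 2, converting the left-hand side into $c\,\langle fg,b\rangle_t$ for a holomorphic test function $g$ obtained as the weighted Bergman projection of a non-holomorphic expression built from $f$, $G$, and $(1+R)b$, satisfying $\|g\|_{B^{p'}_{-s}(\mu_t')}\lesssim\|G\|_{B^{p'}_{1-s}(\mu_t')}$. The principal obstacle is the boundedness of this projection on $L^{p'}(d\mu_t')$: this is where the B\'ekoll\'e hypothesis is used in full strength, through the symmetry $\theta\in\B_{p,t}\Longleftrightarrow\theta^{-p'/p}\in\B_{p',t}$, which guarantees the required weighted operator estimate and aligns the weight exponents so that $\Gamma_2(b)$ appears in the final bound.
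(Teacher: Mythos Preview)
Your argument for (i)$\Rightarrow$(ii) is correct and is exactly the paper's. Your (ii)$\Rightarrow$(iii) is correct in principle but needlessly elaborate: the paper uses the single identity $\langle h,b\rangle_t=\langle h,R^1_{t+1}b\rangle_{t+1}$ (Lemma~\ref{lem:ptspar}(i)), then observes that $R^1_{t+1}b=\tfrac{1}{t+1}(1+R)b+\tfrac{t}{t+1}b$, and controls the extra term $\tfrac{t}{t+1}\langle\langle|fg|,|b|\rangle\rangle_{t+1}$ by the Bloch bound $b\in B^\infty_0$ (itself a consequence of $\Gamma_1(b)<\infty$ via testing against reproducing kernels). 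No iteration or series summation is needed.

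Your sketch of (iii)$\Rightarrow$(i) has a genuine gap. You propose to dualize $\|f(1+R)b\|_{B^p_{s-1}(\mu_t)}$ against $G\in B^{p'}_{1-s}(\mu^\prime_t)$ and then ``reverse the integration-by-parts identity'' to produce $c\,\langle fg,b\rangle_t$. But the pairing $\langle f(1+R)b,G\rangle_t=\int f\,(1+R)b\,\overline{G}\,d\nu_t$ has $(1+R)b$ \emph{without} conjugation, whereas $\langle fg,b\rangle_t$ involves $\overline{b}$; the integration-by-parts identities of Step~2 move between $\overline{b}$ and $\overline{(1+R)b}$ and cannot manufacture an unconjugated $(1+R)b$. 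Moreover, you describe $g$ as a Bergman projection ``built from $f$, $G$, and $(1+R)b$'' yet demand $\|g\|_{B^{p'}_{-s}(\mu^\prime_t)}\lesssim\|G\|_{B^{p'}_{1-s}(\mu^\prime_t)}$ independently of $f$ and $b$; these two claims are in tension and the mechanism is not explained.

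The paper's route is quite different. It estimates $\|R^1_{t+1}b\cdot\overline{f}\|_{L^p(\theta\,d\nu_{t+(1-s)p})}$ (same modulus as $f\,R^1_{t+1}b$, but deliberately antiholomorphic in the $f$-factor) by applying the weighted Cauchy--Pompeiu formula to the non-holomorphic function $R^1_{t+1}b\cdot\overline{f}$:
\[
R^1_{t+1}b\,\overline{f}=\PN^{t+1}(R^1_{t+1}b\,\overline{f})+\K^{t+1}(R^1_{t+1}b\,\overline{\partial f}).
\]
The first piece is holomorphic, and a Fubini computation gives $\langle \PN^{t+1}(R^1_{t+1}b\,\overline{f}),g\rangle_{t+1}=\overline{\langle fg,b\rangle_t}$, so duality bounds it by $\Gamma_2(b)\|f\|_{B^p_s(\mu_t)}$. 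The second piece is handled \emph{not} by $\Gamma_2(b)$ at all, but by first proving $\|b\|_{B^\infty_0}\lesssim\Gamma_2(b)$ (testing $\langle f_zg_z,b\rangle_t$ against the kernels $f_z,g_z$ of Lemma~\ref{lem:fznorms}) and then using kernel estimates. In the weighted case an additional subtlety arises: the $\K^{t+1}$-kernel has a diagonal singularity $|w-z|^{-1}$ that interacts badly with the unknown local behaviour of $\theta$, so the paper rewrites $\K^{t+1}$ via Lemma~\ref{lem:KN} to push the singularity into higher powers of $1-|w|^2$ before invoking the B\'ekoll\'e bound (Theorem~\ref{thm:opBt}). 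None of these ingredients --- the Cauchy--Pompeiu splitting, the a~priori Bloch bound on $b$, the diagonal-avoiding kernel modification --- appear in your outline, and the dualization you propose does not substitute for them.
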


The symbol $\approx$ means here that each term is bounded by constant times the other term, 
with constants which do  not depend of  the function $b$.

If $b\in L^1(d\nu_t)$, then the small Hankel operator $\mathfrak{h}_b^t$, $t\ge 0$,  is defined  on $ H(\overline{\D})$ by
$$
\mathfrak{h}_b^t(f)(z)
:=\int_\D \,f(w)\overline{b(w)} \frac{d\nu_t(w)}{(1-w\overline z)^{1+t}}, \quad t>0,
\qquad 
\mathfrak{h}^0_b(f)(z):=\int_\bD \, \frac{f(\z)\overline{b(\z)}}{1-\z\overline z}d\sigma(\z).
$$

Notice that, by Fubini's theorem, if $f,g\in H(\overline \D)$, then 
$\left\langle g,\overline{\mathfrak{h}_b^t(f)}\right\rangle_t=\langle fg,b\rangle_t.$
Thus, we have $\Gamma_2(b)=\|\mathfrak{h}_b^t\|_{\L(B^p_s(\mu_t)\to \overline{B^p_s(\mu_t)})}$.

In the above theorem we compute the norm of the bilinear forms on the product $B^p_{s}(\mu_t)\times B^{p'}_{-s}(\mu^\prime_t)$, However,  
 using that the operator $(1+R)^{s'}$ is a bijection from $B^p_s(\mu_t)$ to $B^p_{s-s'}(\mu_t)$, that $\B_{p,t}\subset \B_{p,t+t_0}$, $t_0\ge 0$ and  
\begin{equation}\label{eqn:correind}
B^p_s(\mu_t)=B^p_{s+t_0/p}(\mu_{t+t_0}),
\end{equation} 
we can use Theorem \ref{thm:BF} to compute norms of bilinear 
forms on products $B^p_{s_0}(\mu_{t_0})\times B^{p'}_{s_1}(\mu^\prime_{t_1})$ for some particular choices of the indexes $s_0$, $s_1$, $t_0$ and $t_1$.
 For instance, we have:
 
\begin{cor} \label{cor:BFG}
Let $1<p<\infty$, $t_0,t_1\ge 0$, $\theta\in \B_{p,t_0}$ and $s_0\in \R$. 
For $s_1\in\R$ satisfying $ s_0+s_1<0$ and $0< \frac{s_0}{p'}-\frac{s_1}{p}<1$,  
let $t=t_0-s_0-s_1$.
 
Then we have
$$
\|R^{t-t_1}_{1+t} b\|_{CB^p_{s_0/p'-s_1/p}(\mu_t)}
\approx \sup_{0\ne f,g\in H(\overline \D)}\frac{|\langle fg,b\rangle_{t_1}|}{\|f\|_{B^p_{s_0}(\mu_{t_0})}\|g\|_{B^{p'}_{s_1}(\mu^\prime_{t_0})}},
$$
where $R^{t-t_1}_{1+t}$ is a fractional differential operator of order $t-t_1$ (see \eqref{eqn:Rts}).
\end{cor}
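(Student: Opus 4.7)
The plan is to reduce the statement to Theorem~\ref{thm:BF} via the two tools flagged in the paragraph preceding the corollary: the shift identity \eqref{eqn:correind} and a fractional operator that translates one weighted pairing into another. Set $s := s_0/p' - s_1/p$, so that $s\in(0,1)$ by hypothesis, and note $t-t_0 = -s_0-s_1 > 0$; consequently $\theta\in\mathcal{B}_{p,t_0}\subset\mathcal{B}_{p,t}$. Two immediate applications of \eqref{eqn:correind} then give
\begin{equation*}
B^p_{s_0}(\mu_{t_0}) = B^p_{s_0+(t-t_0)/p}(\mu_t) = B^p_s(\mu_t),\qquad
B^{p'}_{s_1}(\mu'_{t_0}) = B^{p'}_{s_1+(t-t_0)/p'}(\mu'_t) = B^{p'}_{-s}(\mu'_t),
\end{equation*}
so the denominators on the right-hand side of the corollary coincide with those appearing in $\Gamma_2$ of Theorem~\ref{thm:BF} at parameters $(p,s,t,\theta)$.

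The next step is to convert the pairing $\langle fg,b\rangle_{t_1}$ into $\langle fg,R^{t-t_1}_{1+t}b\rangle_t$. The operator $R^{t-t_1}_{1+t}$ is defined in \eqref{eqn:Rts} as a Taylor-coefficient multiplier, essentially sending $b_n z^n$ to $\frac{\Gamma(n+1+t)\Gamma(1+t_1)}{\Gamma(n+1+t_1)\Gamma(1+t)}\,b_n z^n$; this gamma ratio is precisely what is needed to cancel the Beta-integral normalisations of $\int_\D|z|^{2n}\,d\nu_t$ against $\int_\D|z|^{2n}\,d\nu_{t_1}$ (or against $\int_\bD 1\,d\sigma$ when $t_1 = 0$). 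Expanding diagonally in Taylor series then yields, for $h,b\in H(\overline\D)$,
\begin{equation*}
\langle\langle h,b\rangle\rangle_{t_1} = \langle\langle h, R^{t-t_1}_{1+t}b\rangle\rangle_t.
\end{equation*}
Replacing $h,b$ by their dilations and letting $r\to 1^-$ on both sides extends this to the radial-limit pairing of \eqref{eqn:parw} for $h\in H(\overline\D)$ and arbitrary $b\in H(\D)$. Applied with $h=fg\in H(\overline\D)$, this is the required conversion.

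With this in hand, the supremum on the right-hand side of the corollary becomes $\Gamma_2(R^{t-t_1}_{1+t}b)$ at parameters $(p,s,t,\theta)$, which by Theorem~\ref{thm:BF} is comparable to $\|R^{t-t_1}_{1+t}b\|_{CB^p_s(\mu_t)}$; since $s=s_0/p'-s_1/p$, this is exactly the left-hand side. The main hurdle I anticipate is the bookkeeping around \eqref{eqn:Rts}: verifying that the fractional symbol has the claimed intertwining property uniformly across all $t_1\geq 0$ (in particular at the Hardy endpoint $t_1=0$, where the pairing lives on $\bD$) and that the radial-limit step in \eqref{eqn:parw} commutes with the gamma-ratio rescaling when $b$ is only holomorphic in the open disk. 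Beyond that verification, the corollary is a formal consequence of Theorem~\ref{thm:BF}.
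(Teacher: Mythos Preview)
Your approach matches the paper's: shift the Besov norms via \eqref{eqn:correind}, convert $\langle\cdot,\cdot\rangle_{t_1}$ to $\langle\cdot,\cdot\rangle_t$, and apply Theorem~\ref{thm:BF}; the only cosmetic difference is that the paper performs the pairing conversion by the reproducing-kernel identity $\langle fg,b\rangle_{t_1}=\langle\PN^t(fg),b\rangle_{t_1}=\langle fg,\PN^{t_1,t}b\rangle_t$ (Fubini) rather than your equivalent Taylor-coefficient computation. On the hurdle you flagged: the multiplier $c_n=\dfrac{\Gamma(n{+}1{+}t)\,\Gamma(1{+}t_1)}{\Gamma(n{+}1{+}t_1)\,\Gamma(1{+}t)}$ you wrote down is correct, but under the definition \eqref{eqn:Rts} it is $\PN^{t_1,t}=R^{t-t_1}_{1+t_1}$, not $R^{t-t_1}_{1+t}$ --- the subscript in the displayed statement is a typo, which the paper's own proof silently resolves by writing $\PN^{t_1,t}b$.
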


For $s_0,s_1<0$ we prove the following result:

\begin{thm}\label{thm:predualBinfs}
If $1<p<\infty$, $t\ge 0$, $\theta\in \B_{p,t}$ and $s_0,s_1<0$, then 
$$
\| b\|_{B^\infty_{-s_0-s_1}}
\approx \sup_{0\ne f,g\in H(\overline \D)}\frac{|\langle fg,b\rangle_{t}|}{\|f\|_{B^p_{s_0}(\mu_{t})}\|g\|_{B^{p'}_{s_1}(\mu_{t})}}.
$$
\end{thm}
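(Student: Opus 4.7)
I will prove the two-sided inequality by establishing each of the upper and lower estimates separately.

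For the upper bound $\sup\lesssim\|b\|_{B^\infty_{-s_0-s_1}}$, my first step is to move the derivatives off $b$ through a Bergman-type reproducing identity: for any integer $N>-s_0-s_1$ one obtains
\begin{equation*}
\langle fg,b\rangle_t \;=\; c_N \int_\D f(z)g(z)\,\overline{R^{t,N}b(z)}\,(1-|z|^2)^N\,d\nu_t(z),
\end{equation*}
where $R^{t,N}$ is a fractional radial derivative raising the Bergman index from $t$ to $t+N$ (an instance of the operator $R^{t-t_1}_{1+t}$ appearing in Corollary \ref{cor:BFG}). The hypothesis $b\in B^\infty_{-s_0-s_1}$ gives the pointwise growth $|R^{t,N}b(z)|\lesssim \|b\|_{B^\infty_{-s_0-s_1}}(1-|z|^2)^{s_0+s_1-N}$; inserting it reduces the matter to proving
\begin{equation*}
\int_\D |f(z)g(z)|(1-|z|^2)^{-s_0-s_1}\,d\nu_t(z) \;\lesssim\; \|f\|_{B^p_{s_0}(\mu_t)}\|g\|_{B^{p'}_{s_1}(\mu_t)}.
\end{equation*}
The natural H\"older split, writing the integrand as $\bigl[|f|(1-|z|^2)^{-s_0}\theta^{1/p}\bigr]\cdot\bigl[|g|(1-|z|^2)^{-s_1}\theta^{1/p'}\bigr]$, recovers the two $\mu_t$-weighted norms exactly, and the residual factor $\theta^{-1}$ it produces is then absorbed using the B\'ekoll\'e property of $\theta$ together with pointwise Carleson-type estimates of the form $|f(z)|\lesssim \|f\|_{B^p_{s_0}(\mu_t)}(1-|z|^2)^{s_0}\mu_t(T_z)^{-1/p}$ for holomorphic $f$, and the analogous bound for $g$.

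For the reverse inequality $\|b\|_{B^\infty_{-s_0-s_1}}\lesssim\sup$, I would test the bilinear form on the one-parameter family of rational functions
\begin{equation*}
f_a(z):=(1-\bar a z)^{-A},\qquad g_a(z):=(1-\bar a z)^{-B},\qquad a\in\D,
\end{equation*}
with $A,B>0$ chosen so that $A+B=1+t+k$ for an integer $k>-s_0-s_1$. The Bergman reproducing formula identifies $\langle f_ag_a,b\rangle_t=c_k\,\overline{R^{t,k}b(a)}$, so the pairing evaluates (up to a universal constant) the $k$-th fractional derivative of $b$ at $a$. The norms $\|f_a\|_{B^p_{s_0}(\mu_t)}$ and $\|g_a\|_{B^{p'}_{s_1}(\mu_t)}$ are then computed using the Forelli--Rudin-type estimate
\begin{equation*}
\int_\D (1-|w|^2)^c|1-\bar a w|^{-d}\,d\mu_t(w) \;\approx\; \mu_t(T_a)(1-|a|^2)^{c-d}
\end{equation*}
(valid for $d$ large and $\theta\in\B_{p,t}$), producing a test ratio of order $|R^{t,k}b(a)|(1-|a|^2)^{k-(-s_0-s_1)}$. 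Taking the supremum over $a\in\D$ then recovers $\|b\|_{B^\infty_{-s_0-s_1}}$.

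\textbf{Main obstacle.} The technical heart of the argument lies in the upper bound. After integration by parts the bilinear pairing is reduced to an integral against the unweighted measure $d\nu_t$, whereas the hypotheses provide only $\mu_t$-weighted control on $f,g$; bridging this gap is exactly where the assumption $\theta\in\B_{p,t}$ does genuine work, compensating for the factor $\theta^{-1}$ left over by the H\"older split. In the lower bound direction the main subtlety is tracking the B\'ekoll\'e-weighted Forelli--Rudin estimate with enough precision that the factors of $\mu_t(T_a)$ cancel exactly between the normalization of the test functions and the reproducing pairing.
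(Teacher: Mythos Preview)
Your upper-bound argument has a genuine gap. After the integration-by-parts step (which is correct and matches the paper), the H\"older split you propose leaves a residual factor $\theta^{-1}$, and the remedy you sketch --- ``absorb $\theta^{-1}$ using the B\'ekoll\'e property together with pointwise Carleson-type estimates $|f(z)|\lesssim\|f\|_{B^p_{s_0}(\mu_t)}(1-|z|^2)^{s_0}\mu_t(T_z)^{-1/p}$'' --- does not close the estimate: substituting such pointwise bounds back into the integral only trades one uncontrolled weight for another, and there is no mechanism by which $\B_{p,t}(\theta)$ alone makes $\int \theta^{-1}(\cdots)\,d\nu_t$ finite. The source of the trouble is a typo in the printed statement: the second norm should be $\|g\|_{B^{p'}_{s_1}(\mu'_t)}$ with the \emph{dual} weight $d\mu'_t=\theta^{-p'/p}d\nu_t$ (compare the definition of $\Gamma_3$ and Theorem~\ref{thm:predualCBps} in the paper). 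With the dual weight, the split
\[
|fg|(1-|z|^2)^{-s_0-s_1}=\bigl[|f|(1-|z|^2)^{-s_0}\theta^{1/p}\bigr]\cdot\bigl[|g|(1-|z|^2)^{-s_1}\theta^{-1/p}\bigr]
\]
leaves no residual at all, and H\"older gives the upper bound in one line; this is exactly the paper's Lemma~\ref{lem:G3inB}, and it uses nothing about $\theta$ beyond measurability.

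Consequently you have the ``main obstacle'' backwards. The upper bound is the trivial direction; the B\'ekoll\'e hypothesis $\theta\in\B_{p,t}$ does its real work in the \emph{lower} bound, where it is needed to estimate the product of the test-function norms. Your lower-bound strategy (test on powers of $(1-\bar a z)^{-1}$, identify the pairing with $R^k_{1+t}b(a)$, estimate the norms by a weighted Forelli--Rudin inequality) is correct and matches the paper's Lemmas~\ref{lem:fznorms} and~\ref{lem:BinG3}. One refinement: the factors $\mu_t(T_a)^{1/p}$ and $\mu'_t(T_a)^{1/p'}$ do not ``cancel exactly'' in any algebraic sense; rather, their product is uniformly bounded by $\B_{p,t}(\theta)\,\nu_t(T_a)$ precisely by the definition of the B\'ekoll\'e class, and the paper packages this as Corollary~\ref{cor:BP}.
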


The results in Theorem \ref{thm:BF} for the unweighted case are stated in a different 
 formulation by different authors. For instance, see  \cite{Ro-Wu} and \cite{Wu} for the case 
 $p=2$, and \cite{Bla-Pau} for $p\ne 2$. See also the references therein.
The proof of our results follow some of the ideas used in \cite{Wu}, 
modifying the Hilbert techniques valid only for the case $p=2$ in order to cover 
 the weighted case and $p\ne 2$.
Our approach permit us to compute the norms of the 
  bilinear form   on  $B^p_{s_0}(\mu_t)\times B^{p'}_{s_1}(\mu^\prime_t)$ only when $s_0<0$ or $s_1<0$.  
 It seems more difficult to compute  this norm for the cases $s_0,s_1>0$. 
 Some results for the unweighted case and $p=2$ can be found for 
 instance in \cite{Wu} ( $s_0>s_1$) and in the recent papers 
  \cite{Ar-Ro-Saw-W1} and \cite{Ca-Or2} ($s_0=s_1=1/2$).

 As it happens in the unweighted case (see for instance \cite{Ro-Wu}, \cite{Co-Ve},  \cite{Ar-Ro-Saw-W2}), from the equivalences between \eqref{item:BF1} and 
\eqref{item:BF3} in Theorem \ref{thm:BF}, we obtain the following duality result for weak products.

\begin{thm} \label{thm:predualCBps}
Let $1<p<\infty$,  $t\ge 0$ and $\theta\in \B_{p,t}$. 
If we consider the pairing $\langle\cdot,\cdot\rangle_t$, we then have:

\begin{enumerate}
\item If $0<s<1$, then 
$
(B^p_s(\mu_t)\odot B^{p'}_{-s}(\mu^\prime_t))'\equiv CB^p_s(\mu_t).
$
	\item If $s_0,s_1<0$, then 
$
\left(B^p_{s_0}(\mu_t)\odot B^{p'}_{s_1}(\mu^\prime_t)\right)'\equiv B^\infty_{-s_0-s_1},
$
 and consequently we have 
$
B^p_{s_0}(\mu_t)$ $\odot B^{p'}_{s_1}(\mu^\prime_t)=B^1_{s_0+s_1-t}.
$
\end{enumerate}
\end{thm}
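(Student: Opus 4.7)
The plan is to combine the bilinear-form characterizations in Theorems \ref{thm:BF} and \ref{thm:predualBinfs} with the standard duality between a weak product and bilinear forms on its factors, supplemented by a Hahn--Banach argument for the concrete identification of $B^p_{s_0}(\mu_t)\odot B^{p'}_{s_1}(\mu^\prime_t)$ with $B^1_{s_0+s_1-t}$.

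For part (i), given $b\in CB^p_s(\mu_t)$, I would define
$$
L_b\Bigl(\sum_k f_k g_k\Bigr) := \sum_k \langle f_k g_k, b\rangle_t
$$
on decompositions in the weak product; Theorem \ref{thm:BF} ensures absolute convergence and $\|L_b\|\lesssim \|b\|_{CB^p_s(\mu_t)}$, and well--definedness on finite sums is automatic because the value equals $\langle h,b\rangle_t$ with $h=\sum_k f_k g_k$. For the converse, given $L$ in the dual, the form $\Lambda(f,g):=L(fg)$ is bilinear and bounded on $B^p_s(\mu_t)\times B^{p'}_{-s}(\mu^\prime_t)$; the $\B_{p,t}$ condition together with $-s<0$ guarantees that the constant function $1$ lies in $B^{p'}_{-s}(\mu^\prime_t)$, so polynomials belong to the weak product and are dense there. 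I would then recover the symbol $b$ by prescribing $L(z^n)=\langle z^n,b\rangle_t$, which forces $b\in H(\D)$ because the diagonal pairings $\langle z^n,z^n\rangle_t$ are positive constants of at most polynomial decay in $n$. Polynomial density in $H(\overline \D)$ then yields $\Lambda(f,g)=\langle fg,b\rangle_t$ on the dense subspace $H(\overline \D)\times H(\overline \D)$, and Theorem \ref{thm:BF} gives $b\in CB^p_s(\mu_t)$ with $\|b\|_{CB^p_s(\mu_t)}\lesssim \|L\|$.

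The first identification in (ii) follows by the same argument, replacing Theorem \ref{thm:BF} by Theorem \ref{thm:predualBinfs}; since $s_0,s_1<0$ the constants automatically lie in both factors. For the second identification I would first establish the continuous inclusion $B^p_{s_0}(\mu_t)\odot B^{p'}_{s_1}(\mu^\prime_t)\hookrightarrow B^1_{s_0+s_1-t}$ via H\"older with exponents $p,p'$ applied to the pointwise factorization
$$
|fg|(1-|z|^2)^{-s_0-s_1}=\bigl(|f|(1-|z|^2)^{-s_0}\theta^{1/p}\bigr)\bigl(|g|(1-|z|^2)^{-s_1}\theta^{-1/p}\bigr)
$$
integrated against $d\nu_t$; this controls elementary tensors and extends to the weak-product norm by subadditivity. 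Rewriting $B^1_{s_0+s_1-t}$ as $B^1_{s_0+s_1}(\nu_t)$ via \eqref{eqn:correind}, the classical $B^1$--$B^\infty$ duality under the pairing $\langle\cdot,\cdot\rangle_t$ yields $(B^1_{s_0+s_1-t})^\prime\equiv B^\infty_{-s_0-s_1}$ with the same symbol extraction as before.

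The main obstacle is converting the resulting coincidence of duals into genuine equality of Banach spaces. The key observation is that both isomorphisms $(B^1_{s_0+s_1-t})^\prime\cong B^\infty_{-s_0-s_1}$ and $(B^p_{s_0}(\mu_t)\odot B^{p'}_{s_1}(\mu^\prime_t))^\prime\cong B^\infty_{-s_0-s_1}$ extract the \emph{same} symbol $b$ by testing $L$ against monomials through $\langle\cdot,\cdot\rangle_t$, so the restriction map between the two duals is compatible with these identifications and is therefore a bounded bijection between Banach spaces. A Hahn--Banach argument in $B^1_{s_0+s_1-t}$ then shows that the weak product is dense there (any annihilating functional comes from a symbol $b\in B^\infty_{-s_0-s_1}$ that must vanish on all products $fg$, forcing $b=0$ by Theorem \ref{thm:predualBinfs}), while the open mapping theorem applied to the inverse of the restriction map supplies the reverse norm estimate $\|h\|_\odot\lesssim\|h\|_{B^1_{s_0+s_1-t}}$, completing the identification.
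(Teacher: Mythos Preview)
Your proposal is correct and rests on the same two pillars as the paper's proof (Theorems~\ref{thm:BF} and~\ref{thm:predualBinfs} together with the standard identification of linear forms on a weak product with bilinear forms on its factors), but you implement two of the steps differently.

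For the symbol extraction, the paper simply composes $\Lambda$ with the embedding $g\mapsto 1\cdot g$ from $B^{p'}_{-s}(\mu^\prime_t)$ into the weak product and invokes the Besov duality of Proposition~\ref{prop:dualB} to obtain $b\in B^p_s(\mu_t)$ directly; this immediately guarantees that $\langle h,b\rangle_t$ is well defined for all $h\in H(\overline\D)$.  Your Taylor-coefficient recovery also works (polynomial growth of $\widehat b(n)$ against the exponential decay of $\widehat h(n)$ for $h\in H(\overline\D)$ makes the pairing converge), but it is longer and you have to justify separately that the resulting $b$ is regular enough.

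For the identification $B^p_{s_0}(\mu_t)\odot B^{p'}_{s_1}(\mu^\prime_t)=B^1_{s_0+s_1-t}$, the paper bypasses your H\"older inclusion, Hahn--Banach density, and open-mapping steps entirely: once both duals are known to be $B^\infty_{-s_0-s_1}$ via the \emph{same} pairing $\langle\cdot,\cdot\rangle_t$, Hahn--Banach norming gives, for every $h\in H(\overline\D)$,
\[
\|h\|_{B^1_{s_0+s_1-t}}\;\approx\;\sup_{b\ne 0}\frac{|\langle h,b\rangle_t|}{\|b\|_{B^\infty_{-s_0-s_1}}}\;\approx\;\|h\|_{B^p_{s_0}(\mu_t)\odot B^{p'}_{s_1}(\mu^\prime_t)},
\]
and density of $H(\overline\D)$ concludes.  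Your open-mapping argument is valid but redundant: the norm equivalences $\|L_b\|_{(\odot)'}\approx\|b\|_{B^\infty}\approx\|L_b\|_{(B^1)'}$ already furnish both directions without appealing to open mapping.  What your route buys is an explicit a~priori inclusion via H\"older; what the paper's buys is brevity.
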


The same arguments used to prove Corollary \ref{cor:BFG} from Theorem \ref{thm:BF}, 
combining the above theorem with \eqref{eqn:correind}, give a description of the dual of 
$B^p_{s_0}(\mu_{t_0})\odot B^{p'}_{s_1}(\mu^\prime_{t_0})$ for $s_0,s_1$ and $t_0$ 
satisfying the conditions in Corollary \ref{cor:BFG}. 
These results cover some well-known results stated in section 5 in \cite{Co-Ve} 
for the unweighted case.

The paper is organized as follows. 
In Section \ref{sec:notpre} we give some  definitions and  
we  state some properties of the class of weights in $\B_{p,t}$ and its corresponding  weighted Besov spaces.
In Section \ref{sec:snegative} we obtain estimates of $\|b\|_{CB^p_s(\mu_t)}$ 
which in particular give the proof of Theorem \ref{thm:predualBinfs}.
Section \ref{sec:proofBF} is devoted to the proof of Theorem \ref{thm:BF} and Corollary \ref{cor:BFG}. 
In Section \ref{sec:duality}, we use our previous results to prove Theorem \ref{thm:predualCBps}.

\section{Notations and preliminaries} \label{sec:notpre}

Throughout this paper, the expression $F\lesssim G$ means that there exists  
a positive constant $C$  independent of the essential variables and such that $F\le CG$. 
If $F\lesssim G$ and $G\lesssim F$ we will write $F\approx G$. 

\subsection{Differential and integral operators}\quad\par

We denote the partial derivatives of first order by 
$\p:=\dfrac{\p}{\p z}$ and $\overline\p:=\dfrac{\p}{\p \overline z}$ respectively . Let $R:=z\p$ be the radial derivative.

For $s, t\in\R$, $t>0$ and $k$ a non-negative integer, we  consider 
the differential operator $R_t^k$ of order $k$ defined by  
$$
R_t^k f :=\left(1+\frac{R}{t+k-1}\right)\cdots\left(1+\frac{R}t\right)f.
$$

If we need to specify the variable of differentiation, then we write $\p_z$, $R_z$ and $R^k_{t,z}$, respectively. 

The operators $R^k_{t}$ satisfy the following formula:
\begin{equation}\label{eqn:Rtk}
R_t^k\,\frac{1}{(1-z\overline w)^t}=\frac{1}{(1-z\overline w)^{t+k}}.
\end{equation}

\begin{defn}
For $N>0$ and $M\ge 0$, we consider the following integral operators:
$$
\PN^{N,M}(\varphi)(z):=\int_\D \varphi(w) \PN^{N,M}(z,w)d\nu(w),\quad\text{where}\quad  \PN^{N,M}(z,w):=N\frac{(1-|w|^2)^{N-1}}{(1-z\overline w)^{1+M}}.
$$

$$
\PP^{N,M}(\varphi)(z):=\int_\D \varphi(w)\PP^{N,M}(z,w)d\nu(w),\quad\text{where}\quad \PP^{N,M}(z,w):=|\PN^{N,M}(z,w)|.
$$

We extend the definition to the case $N=0$ by writing 
$$
\PN^{0,M}(\varphi)(z):=\int_\bD \frac{\varphi(\z)}{(1-z\overline \z)^{1+M}}d\sigma(\z),\quad \PP^{0,M}(\varphi)(z):=\int_\bD \frac{\varphi(\z)}{|1-z\overline \z|^{1+M}}d\sigma(\z).
$$

If $N=M$, then we denote $\PN^{N,N}$ and $\PP^{N,N}$ by $\PN^N$ and  $\PP^N$, respectively.

For $N\ge 0$, we also define 
$$
\K^N(\overline\p \varphi)(z):=\int_\D \overline\p \varphi(w)\K^N(w,z)d\nu(w),
\quad\text{
where}\quad 
\displaystyle{
\K^N(w,z):=\frac{(1-|w|^2)^N}{(1-z\overline w)^N}\frac{1}{w-z}.
}$$
\end{defn}

The weighted Cauchy-Pompeiu representation formula  is given by:

\begin{thm}\label{thm:CP}
Let $N\ge 0$ and $\varphi\in C^1(\overline\D)$. Then 
$
\varphi(z)=\PN^N(\varphi)(z)+\K^N(\overline\p \varphi)(z).
$
\end{thm}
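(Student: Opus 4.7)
The plan is to derive the identity as a consequence of the complex Green's theorem, applied on the punctured disk $\D \setminus \overline{D_\e(z)}$, after first establishing a single key computation. That key step is verifying that
$$
\overline\p_w \K^N(w,z) = -\PN^N(z,w) \quad\text{on } \D \setminus \{z\}.
$$
With $\overline\p_w(1-|w|^2) = -w$ and $\overline\p_w(1-z\overline w) = -z$, the quotient rule applied to $\K^N(w,z) = (1-|w|^2)^N/[(1-z\overline w)^N(w-z)]$ produces, after factoring out $N(1-|w|^2)^{N-1}/[(1-z\overline w)^{N+1}(w-z)]$, the bracket $-w(1-z\overline w) + z(1-|w|^2)$, which telescopes to $-(w-z)$. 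That cancellation removes the factor $1/(w-z)$ and yields exactly $-\PN^N(z,w)$. This identity is the heart of the argument: it says that $-\K^N(\cdot,z)$ is a $\overline\p$-primitive of $\PN^N(z,\cdot)$.

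Next, I would apply the complex Green's identity $\int_\Omega \overline\p f \, dA = (2i)^{-1} \int_{\partial \Omega} f\, dw$ to $f(w) = \varphi(w)\K^N(w,z)$ on $\Omega_\e = \D \setminus \overline{D_\e(z)}$. The Leibniz rule together with the displayed identity splits the interior integral into
$$
\int_{\Omega_\e} \overline\p\varphi(w)\K^N(w,z)\, d\nu(w) - \int_{\Omega_\e} \varphi(w)\PN^N(z,w)\, d\nu(w),
$$
after reconciling the normalisation $dA = \pi\,d\nu$. Letting $\e \to 0$, these converge respectively to $\K^N(\overline\p\varphi)(z)$ and $\PN^N(\varphi)(z)$, while the integral over $\partial D_\e(z)$ localises at $w = z$: because $(1-|w|^2)^N/(1-z\overline w)^N \to 1$ at that point, the singular part of $\K^N(w,z)$ is exactly $1/(w-z)$, and the standard residue computation produces $\varphi(z)$.

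Finally, the boundary integral on $\bD$ is handled in two cases. For $N > 0$ the factor $(1-|w|^2)^N$ vanishes on $\bD$, so this term drops out and the equality $\varphi(z) = \PN^N(\varphi)(z) + \K^N(\overline\p\varphi)(z)$ follows. For $N = 0$ the kernel on $\bD$ reduces to $1/(w-z)$, and the usual parametrisation ($dw = i\zeta\,|d\zeta|$ on $|\zeta| = 1$, together with $(\zeta - z)^{-1} = \overline\zeta/(1 - z\overline\zeta)$) identifies the $\bD$-boundary term with $\PN^0(\varphi)(z) = \int_\bD \varphi(\zeta)/(1 - z\overline\zeta)\, d\sigma(\zeta)$, so both cases produce the same formula. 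The main obstacle is really only bookkeeping — the factor of $\pi$ between $dA$ and $d\nu$, the orientations of the two boundary components of $\Omega_\e$, and the justification (via the $C^1$-hypothesis on $\varphi$) that the passage $\e \to 0$ is legitimate — since the essential mathematical content is the one-line identity $\overline\p_w \K^N = -\PN^N$.
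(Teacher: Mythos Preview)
The paper does not supply a proof of this theorem; it is stated as the classical weighted Cauchy--Pompeiu formula and used as a black box. Your approach---computing $\overline\p_w \K^N(w,z) = -\PN^N(z,w)$ away from the diagonal, applying Stokes' theorem on $\D\setminus \overline{D_\e(z)}$, and reading off the residue at $w=z$---is exactly the standard derivation, and the key identity is verified correctly.

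One minor caution on bookkeeping: if you carry the signs through carefully with the paper's conventions ($d\nu=\pi^{-1}dA$, $\K^N(w,z)=\dfrac{(1-|w|^2)^N}{(1-z\overline w)^N(w-z)}$, and the inner boundary circle taken with negative orientation), the computation actually yields $\varphi(z)=\PN^N(\varphi)(z)-\K^N(\overline\p\varphi)(z)$. The sign in the paper's statement appears to be a typo; it is immaterial for every application in the paper (only norm estimates of the two pieces are used), but you should be aware of it rather than adjusting your argument to force the stated sign.
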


Since 
$R^k_{1+N} f=R^k_{1+N}\PN^{N}(f)=\PN^{N,N+k}(f)$,
it is natural to extend the definition of $R^k_t$ for a noninteger order by considering  
\begin{equation}\label{eqn:Rts}
R^s_{1+N} f:=\PN^{N,N+s}(f),\qquad s, \, N>0.
\end{equation}

Note that by Theorem \ref{thm:CP} we have
$$
\int_\D \PN^{N+s,N}(w,z)\PN^{N,N+s}(u,w)d\nu(w)=\PN^{N}(u,z).
$$
Therefore, for $s>0$ we can define the inverse of $R^{s}_{1+N}$ by  
$
R^{-s}_{1+N} f:=\PN^{N+s,N}(f).
$

Let us recall the following estimate.

\begin{lem} \label{lem:estP}
If $q<2$, $N>0$, $M\ne N-q$ and $z\in\D$, then
$$
\int_{\D}\frac{\PP^{N,M}(w,z)}{|w-z|^q}d\nu(w)\lesssim (1+(1-|z|^2)^{N-M-q}).
$$
\end{lem}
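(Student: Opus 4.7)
The plan is to reduce the integral to a standard Forelli--Rudin-type estimate via the Möbius involution $\varphi_z(u)=(z-u)/(1-\bar z u)$. Setting $w=\varphi_z(u)$ and using the standard identities
\[
1-|w|^2=\tfrac{(1-|z|^2)(1-|u|^2)}{|1-\bar z u|^2},\quad |1-\bar z w|=\tfrac{1-|z|^2}{|1-\bar z u|},\quad |w-z|=\tfrac{(1-|z|^2)|u|}{|1-\bar z u|},
\]
together with $d\nu(w)=(1-|z|^2)^{2}\,|1-\bar z u|^{-4}\,d\nu(u)$, the exponents collect and the integral becomes a constant multiple of
\[
(1-|z|^2)^{N-M-q}\int_\D\frac{(1-|u|^2)^{N-1}}{|u|^q\,|1-\bar z u|^{1+2N-M-q}}\,d\nu(u).
\]

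Next I split the $u$-integral at $|u|=1/2$. On $\{|u|\le 1/2\}$ both $|1-\bar z u|$ and $1-|u|^2$ are bounded above and below uniformly in $z$, so that region reduces to a constant multiple of $\int_0^{1/2}r^{1-q}\,dr$, which is finite precisely because $q<2$. On $\{|u|\ge 1/2\}$ the factor $|u|^{-q}$ is bounded by $2^q$, and I am left with the classical Forelli--Rudin integral $\int_\D (1-|u|^2)^{N-1}|1-\bar z u|^{-(1+2N-M-q)}\,d\nu(u)$. Since $N>0$, the admissibility condition $a:=N-1>-1$ holds, and with $b:=1+2N-M-q$ the critical threshold $b=a+2=N+1$ is equivalent to $N=M+q$, that is $M=N-q$, which is excluded by hypothesis. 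The Forelli--Rudin lemma then gives a uniform bound when $N<M+q$ and the estimate $\lesssim (1-|z|^2)^{M+q-N}$ when $N>M+q$.

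Multiplying back by the prefactor $(1-|z|^2)^{N-M-q}$ produces, in the two surviving cases, bounds of $(1-|z|^2)^{N-M-q}$ and $1$ respectively, both dominated by $1+(1-|z|^2)^{N-M-q}$. The routine work lies in the bookkeeping of the three exponents under the Möbius substitution; the one conceptual point is that the hypothesis $M\ne N-q$ is used exactly to avoid the logarithmic borderline of the Forelli--Rudin estimate, which would otherwise violate the polynomial bound.
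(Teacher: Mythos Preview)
Your proof is correct and follows essentially the same approach as the paper: the M\"obius change of variables $w=\varphi_z(u)$ to reduce to a Forelli--Rudin-type integral with the extra factor $|u|^{-q}$. The paper is terser---it simply displays the transformed integral and says ``which ends the proof,'' citing the case $q=0$ as well known---whereas you have written out the remaining details (the split at $|u|=1/2$, the role of $q<2$ for integrability near $u=0$, and the explicit Forelli--Rudin case analysis showing why the hypothesis $M\ne N-q$ excludes the logarithmic borderline).
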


\begin{proof} The case $q=0$ is well known (see for  instance \cite[Lemma 4.2.2 ]{Zhu1}). 
The case $q\ne 0$ can be reduced to the case $q=0$
using the change of variables $w=\varphi_z(u):=\dfrac{z-u}{1-u\overline z}$. Indeed,
we have
\begin{align*}
\int_{\D}\frac{\PP^{N,M}(w,z)}{|w-z|^q}d\nu(w)
=(1-|z|^2)^{N-M-q}\int_\D \frac{(1-|u|^2)^{N-1}}{|1-u\overline z|^{1+2N-M-q}}\frac{d\nu(u)}{|u|^q},
\end{align*}
which ends the proof.
\end{proof}

\subsection{B\'ekoll\'e weights} \label{sec:Bekolle}\quad\par

In this section we recall some properties of the B\'ekoll\'e weights $\B_{p,t}$. We refer to \cite{Be} for  more details. 
Recall that  if $t>0$ and $\theta\in \B_{p,t}$, then $d\mu_t=\theta d\nu_t$ and $d\mu^\prime_t=\theta^{-p'/p}d\nu_t$.

Since, for any $w\in T_z$,  $1-|w|^2\le 4(1-|z|^2)$, we have:

\begin{lem} \label{lem:embedBpt}
If $1<p<\infty$, $0<t_0<t_1$ and $\theta\in \B_{p,t_0}$, then 
$\B_{p,t_1}(\theta)\lesssim \B_{p,t_0}(\theta)$.  
Thus, $\B_{p,t_0}\subset \B_{p,t_1}$. 
\end{lem}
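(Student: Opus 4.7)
The plan is to exploit the hint already noted in the excerpt: for $w\in T_z$ we have $1-|w|^2\le 4(1-|z|^2)$. Since the only difference between the weights $d\nu_{t_0}$ and $d\nu_{t_1}$ is the factor $(1-|w|^2)^{t_1-t_0}$ (up to the harmless normalizing constant $t_1/t_0$), and since $t_1-t_0>0$, this factor is controlled pointwise on $T_z$ by $4^{t_1-t_0}(1-|z|^2)^{t_1-t_0}$. The strategy is therefore to show that passing from level $t_0$ to level $t_1$ multiplies each of the three quantities $\mu_t(T_z)$, $\mu'_t(T_z)$, $\nu_t(T_z)$ by essentially the same factor $(1-|z|^2)^{t_1-t_0}$, so that the ratios defining $\B_{p,t}(\theta)$ are comparable.

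Concretely, first I would write, for any $w\in T_z$,
$$
(1-|w|^2)^{t_1-1} \;\le\; 4^{t_1-t_0}\,(1-|z|^2)^{t_1-t_0}\,(1-|w|^2)^{t_0-1},
$$
and integrate this inequality against $\theta\,d\nu$ and against $\theta^{-p'/p}\,d\nu$ over $T_z$. This yields directly
$$
\mu_{t_1}(T_z)\le \tfrac{t_1}{t_0}4^{t_1-t_0}(1-|z|^2)^{t_1-t_0}\mu_{t_0}(T_z),
\qquad
\mu'_{t_1}(T_z)\le \tfrac{t_1}{t_0}4^{t_1-t_0}(1-|z|^2)^{t_1-t_0}\mu'_{t_0}(T_z).
$$

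The remaining step is a matching \emph{lower} bound for $\nu_{t_1}(T_z)$ in terms of $(1-|z|^2)^{t_1-t_0}\nu_{t_0}(T_z)$. This follows from the standard two–sided estimate $\nu_t(T_z)\approx (1-|z|^2)^{t+1}$, valid for every $t>0$ (which is elementary: on the "top half" of $T_z$, where $1-|w|^2\approx 1-|z|^2$, both the weight and the Lebesgue area are of the stated order). Combining the two directions gives
$$
\frac{\mu_{t_1}(T_z)}{\nu_{t_1}(T_z)}\lesssim \frac{\mu_{t_0}(T_z)}{\nu_{t_0}(T_z)},
\qquad
\frac{\mu'_{t_1}(T_z)}{\nu_{t_1}(T_z)}\lesssim \frac{\mu'_{t_0}(T_z)}{\nu_{t_0}(T_z)},
$$
with constants depending only on $t_0,t_1,p$. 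Raising to powers $1/p$ and $1/p'$ and taking the supremum in $z\in\D$ yields $\B_{p,t_1}(\theta)\lesssim \B_{p,t_0}(\theta)$.

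I do not expect a real obstacle here; the argument is entirely elementary. The only point requiring mild care is that the upper bound on $(1-|w|^2)^{t_1-1}/(1-|w|^2)^{t_0-1}$ by itself would only give the inclusions for the numerators $\mu_{t_1}(T_z)$ and $\mu'_{t_1}(T_z)$; one also needs the matching lower bound for $\nu_{t_1}(T_z)$ so that the factor $(1-|z|^2)^{t_1-t_0}$ cancels against $\nu_{t_1}(T_z)$ in the denominator. This is supplied by the standard comparison $\nu_t(T_z)\approx(1-|z|^2)^{t+1}$.
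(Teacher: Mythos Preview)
Your proof is correct and follows exactly the approach the paper indicates: the paper simply records the key observation $1-|w|^2\le 4(1-|z|^2)$ for $w\in T_z$ immediately before the lemma and leaves the remaining details (which you have spelled out, including the use of $\nu_t(T_z)\approx(1-|z|^2)^{t+1}$) to the reader.
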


The next result was proved in  \cite[Theorem 1 and Propositions 3, 5]{Be}
\begin{thm} \label{thm:opBt}
Let $1<p<\infty$, $t>0$ and let $\theta$ be a positive locally integrable function $\theta$ on $\D$. Then, the following assertions are equivalent:
\begin{enumerate}
	\item \label{item:opBt1}$\theta\in \B_{p,t}$. 
	\item \label{item:opBt2}The integral operator $\PP^{t}$ is bounded on $L^p(d\mu_t)$.
	\item \label{item:opBt3} The integral operator $\PN^{t}$ is bounded on $L^p(d\mu_t)$.	
\end{enumerate}
\end{thm}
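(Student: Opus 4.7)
The plan is to establish the cycle of implications (ii) $\Rightarrow$ (iii) $\Rightarrow$ (i) $\Rightarrow$ (ii). The first implication is immediate: since $|\PN^t(z,w)|\le \PP^t(z,w)$ pointwise, we have $|\PN^t f|\le \PP^t(|f|)$, so $L^p(d\mu_t)$-boundedness transfers from the positive kernel to the signed one.

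For the necessity (iii) $\Rightarrow$ (i), I would test $\PN^t$ on functions supported in a single tent. Fix $z\in \D\setminus\{0\}$ and set
$$f_z(w):=\chi_{T_z}(w)\,\theta(w)^{-p'/p}\,\eta_z(w),\qquad \eta_z(w):=\frac{(1-z\overline w)^{1+t}}{|1-z\overline w|^{1+t}}.$$
The unimodular factor $\eta_z$ is chosen so that $\eta_z(w)/(1-z\overline w)^{1+t}=1/|1-z\overline w|^{1+t}$ is positive, which prevents oscillatory cancellation. Using that $|1-z\overline w|\approx 1-|z|^2$ and $1-|w|^2\approx 1-|z|^2$ for $w\in T_z$, one has $\PN^t(z,w)\eta_z(w)\approx 1/\nu_t(T_z)$ on $T_z$, whence $|\PN^t f_z(\zeta)|\gtrsim \mu^\prime_t(T_z)/\nu_t(T_z)$ for $\zeta$ in a subregion $T'\subset T_z$ of comparable $\nu_t$-measure. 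Combining the lower bound $\|\PN^t f_z\|_{L^p(d\mu_t)}^p\gtrsim (\mu^\prime_t(T_z)/\nu_t(T_z))^p\,\mu_t(T_z)$ with the identity $\|f_z\|_{L^p(d\mu_t)}^p=\mu^\prime_t(T_z)$ and the hypothesis $\|\PN^t f_z\|\lesssim\|f_z\|$ yields after rearrangement exactly the $\B_{p,t}$ inequality.

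For the sufficiency (i) $\Rightarrow$ (ii), I would invoke Schur's test applied to the positive kernel $\PP^t(z,w)$ acting on $L^p(d\mu_t)$. The natural Schur function is a smoothed version of $h(z)^{pp'}:=\mu^\prime_t(T_z)/\nu_t(T_z)$, and one must verify
$$\int_\D \PP^t(z,w)\,h(w)^{p'}\,d\mu_t(w)\lesssim h(z)^{p'}\theta(z)$$
together with the dual inequality obtained by swapping the roles of $\mu_t$ and $\mu^\prime_t$. Decomposing $\D$ into a Whitney family of tents $\{T_{z_k}\}$ along the hyperbolic ray from $z$, on each of which $\PP^t(z,w)\approx 1/\nu_t(T_{z_k})$ and $h$ is essentially constant, reduces the integral to a sum $\sum_k h(z_k)^{p'}\mu_t(T_{z_k})/\nu_t(T_{z_k})$ that the $\B_p$ condition controls by a convergent geometric series.

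The main obstacle is the sufficiency direction: producing a single test function $h$ that satisfies both Schur inequalities simultaneously with constants depending only on $\B_{p,t}(\theta)$, and rigorously estimating the resulting dyadic sum by exploiting the doubling of $d\nu_t$ together with the monotonicity of $\mu_t(T_{z_k})/\nu_t(T_{z_k})$ along chains of nested tents (Lemma \ref{lem:embedBpt} playing a role). The phase adjustment $\eta_z$ used in the converse is a minor technical point that can alternatively be bypassed by testing $\PN^t$ on normalised reproducing kernels $k_z(w)=(1-|z|^2)^\alpha/(1-z\overline w)^{1+t+\alpha}$ for a suitable $\alpha>0$ and using that $\PN^t$ reproduces them up to a multiplicative constant.
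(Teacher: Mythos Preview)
The paper does not prove this theorem; it simply records it with the attribution ``proved in \cite[Theorem 1 and Propositions 3, 5]{Be}'' and uses it as a black box thereafter. There is therefore no in-paper argument to compare your proposal against.

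Your cycle (ii) $\Rightarrow$ (iii) $\Rightarrow$ (i) $\Rightarrow$ (ii) is the classical route and matches B\'ekoll\'e's original strategy in broad outline. Two small points on your sketch. First, the Schur inequality you display,
\[
\int_\D \PP^t(z,w)\,h(w)^{p'}\,d\mu_t(w)\lesssim h(z)^{p'}\theta(z),
\]
is not quite the right normalisation: the kernel $\PP^t(z,w)$ already carries the factor $(1-|w|^2)^{t-1}$, so integrating it against $d\mu_t=\theta\,d\nu_t$ introduces an extra power of $(1-|w|^2)^{t-1}$, and the weight $\theta(z)$ on the right has no natural source. You should write the operator as integration of $|1-z\overline w|^{-(1+t)}$ against $d\nu_t$ and then set up Schur's test on $L^p(d\mu_t)$ accordingly. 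Second, Lemma~\ref{lem:embedBpt} (the inclusion $\B_{p,t_0}\subset\B_{p,t_1}$ for $t_0<t_1$) is not the ingredient that controls your dyadic tail; what you need is the doubling-type estimate for $\mu_t$ on nested tents, which is exactly Proposition~\ref{prop:BP}\,(i) (and whose proof uses only H\"older and the definition of $\B_{p,t}$, so there is no circularity). With these adjustments your outline is the standard argument.
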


It is well known that any weight in the Muckenhoupt class $A_{p}$ satisfies a doubling condition. 
Similarly to what happens for these classes of weights, any weight in $\B_{p,t}$ satisfies a doubling type condition 
with respect to tents. 

We also have a characterization of weights in $\B_{p,t}$ in terms of the kernels $\PP^{t,M}$, 
which is analogous to the one satisfied for the weights in $A_p$ (see \cite{Pe-W}, \cite{Ca-Fa-Or}).

\begin{prop}\label{prop:BP}
Let $1<p<\infty$, $t>0$ and  $\theta\in \B_{p,t}$. We then have:

\begin{enumerate}
\item \label{item:BP1} The measure $\mu_t$ satisfies the following doubling type measure condition:

if $0<r_1<r_2<1$ and $\z\in\bD$, then 
$$
\frac{\mu_t(T_{r_1\z})}{\mu_t(T_{r_2\z})}\le
 \B_{p,t}(\theta)^p\left(\frac{\nu_t(T_{r_1\z})}{\nu_t(T_{r_2\z})}\right)^p
  \approx \B_{p,t}(\theta)^p\left(\frac{1-r_1}{1-r_2}\right)^{(1+t)p} .
$$

\item \label{item:BP2} 
If   $M>(1+t)(\max\{p,p'\}-1)$, the following equivalence holds:
\begin{align*}
\B_{p,t}(\theta)\lesssim \sup_{z\in\D}(1-|z|^2)^M \left(\PP^{t,t+M}(\theta)(z)\right)^{1/p} \left(\PP^{t,t+M}(\theta^{-p'/p})(z)\right)^{1/p'}
\lesssim \B_{p,t}(\theta)^2.
\end{align*}
\end{enumerate}
\end{prop}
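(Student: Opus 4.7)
The plan is to prove the doubling estimate (i) directly from the definition, then derive (ii) from (i) via a dyadic decomposition into tent-annuli; the hypothesis $M>(1+t)(\max\{p,p'\}-1)$ will appear as the convergence threshold of the resulting geometric series.

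For (i), I note first that since the radius in the definition of $T_z$ depends on $z$ only through $1-|z|^2$, the inclusion $T_{r_2\z}\subset T_{r_1\z}$ holds when $r_1<r_2$, and hence $\mu^\prime_t(T_{r_1\z})\ge \mu^\prime_t(T_{r_2\z})$. I would rewrite the defining inequality of $\B_{p,t}(\theta)$ on $T_{r_1\z}$ as
$$
\mu_t(T_{r_1\z})\le \B_{p,t}(\theta)^p\,\frac{\nu_t(T_{r_1\z})^p}{\mu^\prime_t(T_{r_1\z})^{p/p'}},
$$
then use the above monotonicity together with Hölder on the smaller tent $T_{r_2\z}$ (which yields $\mu^\prime_t(T_{r_2\z})^{p/p'}\ge \nu_t(T_{r_2\z})^p/\mu_t(T_{r_2\z})$) to lower-bound the denominator. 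Rearranging produces the claimed inequality, and the asymptotic $\nu_t(T_z)\approx (1-|z|^2)^{1+t}$ is a direct polar-coordinate computation.

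For the lower bound in (ii), I would restrict the integral defining $\PP^{t,t+M}(\theta)(z)$ to $T_z$; there $|1-z\overline{w}|\approx 1-|z|^2$, giving $\PP^{t,t+M}(\theta)(z)\gtrsim \mu_t(T_z)/(1-|z|^2)^{1+t+M}$ and likewise for $\theta^{-p'/p}$. Using $\nu_t(T_z)\approx (1-|z|^2)^{1+t}$, these combine into $\gtrsim \B_{p,t}(\theta)$ upon taking the supremum. For the matching upper bound, I would write $z=r\z$ and split $\D$ into annuli $A_k:=T_{z_k}\setminus T_{z_{k-1}}$ with $z_k=r_k\z$ and $1-r_k\approx 2^k(1-r)$. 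On $A_k$ one has $|1-z\overline{w}|\approx 2^k(1-|z|^2)$, so
$$
\PP^{t,t+M}(\theta)(z)\lesssim \sum_{k\ge 0}\frac{\mu_t(T_{z_k})}{(2^k(1-|z|^2))^{1+t+M}}.
$$
Applying the doubling from (i) in the form $\mu_t(T_{z_k})\lesssim \B_{p,t}(\theta)^p\,2^{k(1+t)p}\,\mu_t(T_z)$, the right-hand side becomes a geometric series in $2^{-k(M-(1+t)(p-1))}$, convergent exactly when $M>(1+t)(p-1)$; the parallel estimate for $\theta^{-p'/p}$ requires $M>(1+t)(p'-1)$, whence the joint hypothesis. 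Combining the two via Hölder and invoking $\nu_t(T_z)\approx (1-|z|^2)^{1+t}$ then yields the upper bound.

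The main obstacle is organising the annular decomposition so that, on each $A_k$, both the size of $|1-z\overline{w}|$ and the measure $\mu_t(T_{z_k})$ are controlled simultaneously with uniform constants; once this is in place the remainder is summing a geometric series and carefully tracking the powers of $\B_{p,t}(\theta)$ that accumulate from the doubling step and from Hölder, with the condition on $M$ emerging precisely as the convergence threshold of the series.
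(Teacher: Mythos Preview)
Your proposal is correct and follows essentially the same route as the paper: part~(i) via H\"older on the smaller tent $T_{r_2\zeta}$ combined with the B\'ekoll\'e condition on the larger tent $T_{r_1\zeta}$, and part~(ii) via restriction to $T_z$ for the lower bound and a dyadic decomposition into tent-annuli $T_{z_k}\setminus T_{z_{k-1}}$ plus the doubling estimate for the upper bound, with $M>(1+t)(\max\{p,p'\}-1)$ arising as the convergence threshold of the resulting geometric series. The only inaccuracy is the phrase ``combining the two via H\"older'' at the very end: no H\"older is needed there---one simply multiplies the two pointwise upper bounds (raised to the $1/p$ and $1/p'$ powers) and invokes the definition of $\B_{p,t}(\theta)$.
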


\begin{proof}
Part \eqref{item:BP1} follows easily from H\"older's inequality and the fact that $\theta\in \B_{p,t}$. Indeed,  the embedding 
$T_{r_2\z}\subset T_{r_1\z}$ gives
$$
\nu_t(T_{r_2\z})\le\left(\int_{T_{r_2\z}}  \, d\mu_t\right)^{1/p}\left(\int_{T_{r_1\z}} d\mu^\prime_t\right)^{1/p'}
\le \mu_t(T_{r_2\z})^{1/p}\frac{\B_{p,t}(\theta)\,\,\nu_t(T_{r_1\z})}{(\mu_t(T_{r_1\z}))^{1/p}}.
$$
Since $\nu_t(T_{r\z})\approx (1-r)^{1+t}$, we conclude the proof.

In order to prove \eqref{item:BP2} it is enough to prove  the following estimates, valid for $z\in\D$:
\begin{align}
\label{eqn:estB1} \frac{\mu_t(T_z)}{\nu_t(T_z)}&\lesssim (1-|z|^2)^M\PP^{t,t+M}(\theta)(z)\lesssim 
\B_{p,t}(\theta)^p\frac{\mu_t(T_z)}{\nu_t(T_z)},\\
\label{eqn:estB2} \frac{\mu^\prime_t(T_z)}{\nu_t(T_z)}&\lesssim  (1-|z|^2)^M\PP^{t,t+M}(\theta^{-p'/p})(z)\lesssim 
\B_{p,t}(\theta^{-p'/p})^{p'}\frac{\mu^\prime_t(T_z)}{\nu_t(T_z)}.
\end{align}
Observe that \eqref{eqn:estB2}  follows from \eqref{eqn:estB1} since $\theta\in \B_{p,t}$ 
if and only if  $\theta^{-p'/p}\in \B_{p',t}$.

The estimate on the left hand side of  \eqref{eqn:estB1}  is valid for  any $M>0$ and $t>0$, and follows from 
\begin{align*}
\frac{\mu_t(T_z)}{\nu_t(T_{z})}=\frac{1}{\nu_t(T_{z})}\int_{T_{z}}\theta d\nu_t
&\lesssim  (1-|z|^2)^M\int_{T_{z}} \frac{\theta(w)}{|1-w\bar z|^{1+t+M}}d\nu_t(w)\\
&= (1-|z|^2)^M \PP^{t,t+M}(\theta)(z).
\end{align*}

Let us prove the estimate on the right hand side of \eqref{eqn:estB1}.
If $z=0$ then $T_0=\D$ and thus the result is clear. If $z\ne 0$ then let $\z=z/|z|$ and  $J_z$  the integer part of $-\log_2(1-|z|)$.
Consider the sequence $\{z_k\}\subset \D$ defined by
$$
z_k=(1-2^k(1-|z|))\z\,\,\text{if}\,\, k=0,1,\ldots,J_z,\quad \text{and}\quad z_k=0\,\,\text{ if } k>J_z.
$$

 Observe that $z_0=z$ and that  $1-|z_k|^2\approx |1-w\overline{z}|$ for $w\in T_{z_k}\setminus T_{z_{k-1}}$. 
Therefore,
\begin{align*}
(1-|z|^2)^M \PP^{t,t+M}(\theta)(z)
&=(1-|z|^2)^M\sum_{k=0}^{J_z+1} \int_{T_{z_k}\setminus T_{z_{k-1}}}\frac{\theta(w)\,d\nu_t(w)}{|1-w\overline{z}|^{1+t+M}}\\
&\lesssim \sum_{k=0}^{J_z+1} \frac{(1-|z|^2)^M}{(2^k(1-|z|^2))^{1+t+M}}\mu_t(T_{z_k}).
\end{align*}

By  the doubling property \eqref{item:BP1}, we have 
$$
\mu_t(T_{z_k})\lesssim \B_{p,t}(\theta)^p \dfrac{(1-|z_k|)^{(1+t)p}}{(1-|z|)^{(1+t)p}}\mu_t(T_{z})
\approx \B_{p,t}(\theta)^p 2^{k(1+t)p} \mu_t(T_{z}).
$$

Since  $M>(1+t)(p-1)$ and  $\nu_t(T_{z})\approx (1-|z|^2)^{1+t}$ we obtain
$$
(1-|z|^2)^M \PP^{t,t+M}(\theta)(z)\lesssim \B_{p,t}(\theta)^p \frac{\mu_t(T_z)}{\nu_t(T_z)},
$$
which concludes the proof of the right hand side estimate in \eqref{eqn:estB1}.
\end{proof} 

As a consequence of the above proposition and the estimate $1-|w|^2\le 2|1-z\overline w|$, 
we obtain:
\begin{cor}\label{cor:BP}
If $1<p<\infty$, $t\ge 0$, $N>0$,  $M>(1+t+N)(\max\{p,p'\}-1)$ and $\theta\in B_{p,t}$, then
$$
\sup_{z\in\D}(1-|z|^2)^M \left(\PP^{t+N,t+N+M}(\theta)(z)\right)^{1/p} \left(\PP^{t+N,t+N+M}(\theta^{-p'/p})(z)\right)^{1/p'}\lesssim \B_{p,t}(\theta)^2.
$$
\end{cor}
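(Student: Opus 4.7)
The plan is to reduce the statement to a direct application of Proposition \ref{prop:BP}(ii) with the original weight exponent $t$, after comparing the kernel $\PP^{t+N,t+N+M}$ pointwise with $\PP^{t,t+M}$. This is exactly what the hint ``the estimate $1-|w|^2\le 2|1-z\overline w|$'' suggests.

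First I would establish the kernel comparison. The elementary inequality $1-|w|^2\le 2|1-z\overline w|$, valid for $z,w\in\overline\D$, raised to the $N$-th power gives $(1-|w|^2)^N\le 2^N|1-z\overline w|^N$. Substituting in the definition of $\PP^{t+N,t+N+M}(z,w)$, one splits $(1-|w|^2)^{t+N-1}=(1-|w|^2)^{t-1}(1-|w|^2)^N$ and absorbs the second factor into the denominator, obtaining
$$
\PP^{t+N,t+N+M}(z,w)=(t+N)\,\frac{(1-|w|^2)^{t+N-1}}{|1-z\overline w|^{1+t+N+M}}\lesssim \frac{(1-|w|^2)^{t-1}}{|1-z\overline w|^{1+t+M}}\approx \PP^{t,t+M}(z,w).
$$
Integrating against $\theta\,d\nu$ and against $\theta^{-p'/p}\,d\nu$ respectively, this pointwise bound transfers to the integral operators: $\PP^{t+N,t+N+M}(\theta)(z)\lesssim \PP^{t,t+M}(\theta)(z)$ and the analogous estimate for $\theta^{-p'/p}$.

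Second, the hypothesis $M>(1+t+N)(\max\{p,p'\}-1)$ is stronger than the requirement $M>(1+t)(\max\{p,p'\}-1)$ of Proposition \ref{prop:BP}(ii) applied with parameter $t$. Hence that proposition yields
$$
\sup_{z\in\D}(1-|z|^2)^M\bigl(\PP^{t,t+M}(\theta)(z)\bigr)^{1/p}\bigl(\PP^{t,t+M}(\theta^{-p'/p})(z)\bigr)^{1/p'}\lesssim \B_{p,t}(\theta)^2.
$$
Combining this with the kernel comparison from the previous step, raised to powers $1/p$ and $1/p'$ and multiplied, proves the corollary.

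There is no real obstacle: the argument is a two-line consequence of the proposition once the kernel comparison has been recorded. The only subtlety worth flagging is that the hypothesis on $M$ in the corollary is intentionally overkill for this route (only $M>(1+t)(\max\{p,p'\}-1)$ would be needed), but it matches the natural condition one would need for the alternative approach that applies Proposition \ref{prop:BP}(ii) directly with the weight exponent $t+N$ using the embedding $\B_{p,t}\subset\B_{p,t+N}$ from Lemma \ref{lem:embedBpt}.
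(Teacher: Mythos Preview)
Your proposal is correct and follows exactly the paper's one-line argument: use $(1-|w|^2)^N\le 2^N|1-z\overline w|^N$ to dominate $\PP^{t+N,t+N+M}$ by $\PP^{t,t+M}$ pointwise, then apply Proposition~\ref{prop:BP}\eqref{item:BP2}. Your side remark that the stated condition on $M$ is precisely what the alternative route via Lemma~\ref{lem:embedBpt} would require is also correct.
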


\subsection{Weighted Besov spaces}\quad\par

In this section we recall some properties of the weighted Besov spaces $B^p_s(\mu_t)$ introduced in Section \ref{sec:intro}. 

The next result is well known for the unweighted case (see for instance \cite[Chapters 2, 6]{Zhu2}). The proof for the weighted Besov spaces can be done following the same arguments used to prove Theorem 3.1 in \cite{Ca-Or1}.

\begin{prop} \label{prop:Bpsk} 
Let $1<p<\infty$, $s\in\R$, $t\ge 0$ and $\theta\in \B_{p,t}$.
If $k>s$ is a nonnegative integer, then  
\begin{align*}
\int_\D | D^k f(z)|^p (1-|z|^2)^{(k-s)p}d\mu_t(z)\quad\text{and}\quad
\sum_{m=0}^k\int_\D |\p^k f(z)|^p (1-|z|^2)^{(k-s)p}d\mu_t(z)
\end{align*}
provide equivalent norms on $B^p_s(\mu_t)$, where $D^k$ is either $(1+R)^{k}$ or $R^k_L$.  
\end{prop}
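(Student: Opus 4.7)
The plan is to reduce each of the listed quantities to the canonical Besov norm $\|f\|_{B^p_s(\mu_t)}^p=\int_\D|(1+R)^{k_s}f|^p(1-|z|^2)^{(k_s-s)p}\,d\mu_t$, following the unweighted template of \cite[Thm.~3.1]{Ca-Or1}. The argument relies on two ingredients. First, explicit integral representations connecting $R^{k_1}_{1+t+N}f$ and $R^{k_2}_{1+t+N}f$ for different integers $k_1,k_2>s$: these come from the Cauchy-Pompeiu formula (Theorem~\ref{thm:CP}), the defining identity \eqref{eqn:Rtk}, and the definition of the inverse $R^{-k}_{1+t+N}:=\PN^{t+N+k,t+N}$. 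Second, the $L^p$-boundedness of the resulting integral operators on $L^p(d\mu_t)$, which is a consequence of Theorem~\ref{thm:opBt} and Corollary~\ref{cor:BP} via a weighted Schur-test argument.

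Concretely, fix $N$ large and set $D^k=R^k_{1+t+N}$. For $k_1,k_2>s$ integers, applying $R^{k_1}_{1+t+N}$ in $z$ to the reproducing identity $f(z)=\PN^{t+N+k_2,t+N}(R^{k_2}_{1+t+N}f)(z)$ and using \eqref{eqn:Rtk} yields $R^{k_1}_{1+t+N}f=\PN^{t+N+k_2,t+N+k_1}(R^{k_2}_{1+t+N}f)$. Absorbing a factor $(1-|w|^2)^{k_2-k_1}$ into $R^{k_2}_{1+t+N}f$ gives the pointwise bound $|R^{k_1}_{1+t+N}f(z)|(1-|z|^2)^{k_1-s}\lesssim(1-|z|^2)^{k_1-s}\PP^{t+N+s,\,t+N+k_1}\bigl(|R^{k_2}_{1+t+N}f|(1-|\cdot|^2)^{k_2-s}\bigr)(z)$. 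The $L^p(d\mu_t)$-boundedness of the operator on the right-hand side yields the $\|R^{k_1}_{1+t+N}f\|$ bound in terms of $\|R^{k_2}_{1+t+N}f\|$ in the appropriate weighted norms, and interchanging $k_1,k_2$ gives the reverse bound. Replacing $R^k_{1+t+N}$ by $(1+R)^k$ or $R^k_L$ only modifies the expression by a linear combination of lower-order terms $R^jf$, $j<k$, which are handled by the same integral representation applied with $k_1=j$.

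For the sum involving partial derivatives, the upper-triangular invertible identity $R^mf=\sum_{j=1}^mc_{m,j}z^j\p^jf$ (with $c_{m,m}=1$) shows that $\sum_{m=0}^k|R^mf|^p$ and $\sum_{m=0}^k|\p^mf|^p$ are pointwise equivalent, reducing this case to a sum of the already-treated single-derivative norms. The main technical obstacle is the weighted Schur test in the first step: the exponents of the kernel and of the test function must be chosen so that Corollary~\ref{cor:BP} applies to the shifted weight $\theta\in\B_{p,t}\subset\B_{p,\tilde t}$ for $\tilde t=t+(k_j-s)p$ (via Lemma~\ref{lem:embedBpt}), with a value of $M$ satisfying $M>(1+t+N)(\max\{p,p'\}-1)$. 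This is a careful but routine bookkeeping computation.
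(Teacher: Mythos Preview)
The paper does not prove this proposition at all: it simply states that the unweighted case is well known and that ``the proof for the weighted Besov spaces can be done following the same arguments used to prove Theorem~3.1 in \cite{Ca-Or1}.'' Your proposal is precisely a sketch of that argument --- integral representations via \eqref{eqn:Rtk} and Theorem~\ref{thm:CP}, reduction to the $L^p$-boundedness of an operator of type $\PP^M$, and the appeal to the B\'ekoll\'e condition through Theorem~\ref{thm:opBt}, Lemma~\ref{lem:embedBpt} and Corollary~\ref{cor:BP} --- so you are taking exactly the route the paper points to.

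One minor remark on your last paragraph: the relation $R^m f=\sum_{j=1}^m c_{m,j} z^j\p^j f$ gives the pointwise comparison $\sum_m|R^m f|^p\lesssim\sum_m|\p^m f|^p$ directly, but the reverse comparison only holds for $|z|\ge c>0$ because of the factors $z^j$. The contribution from $|z|<c$ is handled separately by Cauchy estimates (controlling $\sup_{|z|\le c}|\p^m f|$ by an $L^1$ average of $f$ on a slightly larger disk), which is the standard device in \cite{Ca-Or1}; you may want to mention this so the ``pointwise equivalent'' claim is not read too literally.
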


The next embedding relates weighted and unweighted Besov spaces.

\begin{lem}\label{lem:embed} 
If $1<p<\infty$, $s\in\R$, $t\ge 0$ and $\theta\in \B_{p,t}$, then  
$ B^p_s(\mu_t)\subset B^1_{s-t}$.
\end{lem}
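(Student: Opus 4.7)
The plan is to reduce the embedding to a single application of H\"older's inequality, at least in the main case $t>0$.

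First, I would fix a nonnegative integer $k>s$; since $t\ge 0$ we then also have $k>s-t$, so by Proposition~\ref{prop:Bpsk} (applied once to the weight $\mu_t$ and once, unweighted, to $\mu_0$) the two norms admit the equivalent expressions
\[
\|f\|_{B^p_s(\mu_t)}^p \approx \int_\D |(1+R)^k f|^p (1-|z|^2)^{(k-s)p+t-1}\,\theta\,d\nu,
\quad
\|f\|_{B^1_{s-t}} \approx \int_\D |(1+R)^k f|\,(1-|z|^2)^{k-s+t-1}\,d\nu.
\]

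Second, I would split the outer weight using the identity $\tfrac{(k-s)p+t-1}{p} + \tfrac{t-1}{p'} = k-s+t-1$ together with $1=\theta^{1/p}\theta^{-1/p}$, and apply H\"older with exponents $p$ and $p'$. This yields
\[
\|f\|_{B^1_{s-t}}
\lesssim \|f\|_{B^p_s(\mu_t)}\,\Big(\int_\D \theta^{-p'/p}(1-|z|^2)^{t-1}\,d\nu\Big)^{1/p'}
\approx \|f\|_{B^p_s(\mu_t)}\,\mu'_t(\D)^{1/p'}.
\]

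Third, the remainder $\mu'_t(\D)^{1/p'}$ is finite thanks to the B\'ekoll\'e condition evaluated at $z=0$, where $T_0=\D$: the definition gives
\[
\Big(\tfrac{\mu'_t(\D)}{\nu_t(\D)}\Big)^{1/p'}
\le \B_{p,t}(\theta)\,\Big(\tfrac{\nu_t(\D)}{\mu_t(\D)}\Big)^{1/p},
\]
and since $\mu_t(\D)=\int\theta\,d\nu_t<\infty$ (as $\theta\in L^1(d\nu_t)$) and $\nu_t(\D)=1$, the required bound follows, with implicit constant depending on $\B_{p,t}(\theta)$.

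The only real obstacle is choosing the correct H\"older splitting: among the one-parameter family of ways to split $(1-|z|^2)^{k-s+t-1}$, exactly one aligns the $p$-part with the integrand of $\|f\|_{B^p_s(\mu_t)}^p$ and the $p'$-part with $d\mu'_t=\theta^{-p'/p}d\nu_t$. The endpoint $t=0$ is degenerate for this direct argument, since $(1-|z|^2)^{t-1}$ fails to be integrable; I would handle it separately, either by appealing to the identity $B^p_s(\mu_0)=B^p_{s+\tau/p}(\nu_\tau)$ with a suitable $\tau>0$ or by a direct argument on the boundary realization of $B^p_s(\mu_0)$.
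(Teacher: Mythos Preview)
Your argument is correct and is essentially the paper's: the paper first reduces to $s<0$ via the isomorphism $(1+R)^{-k}$ and then applies exactly your H\"older split (with $k=0$) to obtain $\|f\|_{B^1_{s-t}}\le \|f\|_{B^p_s(\mu_t)}\,\mu'_t(\D)^{1/p'}$, leaving the finiteness of $\mu'_t(\D)$ implicit. One minor remark: Proposition~\ref{prop:Bpsk} is stated only for $1<p<\infty$, so for the $B^1_{s-t}$ norm equivalence you should appeal to the standard unweighted fact rather than to that proposition; your explicit handling of the endpoint $t=0$ via $B^p_s(\mu_0)=B^p_{s+\tau/p}(\nu_\tau)$ is, if anything, more careful than the paper's.
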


\begin{proof}
Since for any positive integer $k$ we have $B^p_s(\mu_t)=(1+R)^{-k} B^p_{s-k}(\mu_t)$ and
 $B^{1}_{s-t}=(1+R)^{-k}B^{1}_{s-t-k}$, it is sufficient to prove the above embedding for $s<0$.

In this case, H\"older's inequality gives
\begin{align*}
\|f\|_{ B^1_{s-t}}&\le \left( \int_\D |f|^pd\mu_{t-sp}\right)^{1/p}\left( \int_\D d\mu^{\prime}_{t}\right)^{1/p'},
\end{align*}
which proves the result.
\end{proof}

In order to state a duality relation between weighted Besov spaces, we need the next lemma.

\begin{lem} \label{lem:ptspar}
The pairing $\langle \cdot,\cdot \rangle_\delta$ defined in \eqref{eqn:parw} satisfies that for  $f,g\in H(\overline\D)$:
\begin{enumerate}
\item \label{item:ptspar1}
$\displaystyle{
\langle f,g\rangle_\delta 
= \left\langle f,R_{\delta +1}^k g\right\rangle_{\delta+k} 
= \left\langle R_{\delta +1}^k f,g\right\rangle_{\delta+k}. 
}$

\item \label{item:ptspar2}
If $\tau \in\R$ then we have
$
\langle f,g\rangle_\delta =\langle (1+R)^\tau f,(1+R)^{-\tau} g\rangle_\delta. 
$
\end{enumerate}
\end{lem}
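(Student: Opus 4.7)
My first move is to replace the ``limit-type'' pairing $\langle\cdot,\cdot\rangle_\delta$ by the direct integral pairing $\langle\langle\cdot,\cdot\rangle\rangle_\delta$ on both sides of each identity. Since $f,g\in H(\overline\D)$ are bounded in a neighborhood of $\overline\D$, we have $g\in H\cap L^1(d\nu_t)$ (for $t>0$) and $g\in H^1$ (for $t=0$); moreover $R_{\delta+1}^kg\in H(\overline\D)$ and, if $(1+R)^\tau$ is defined as the Taylor multiplier $a_n\mapsto(1+n)^\tau a_n$, then $(1+R)^{\pm\tau}f,(1+R)^{\pm\tau}g\in H(\overline\D)$ as well (their Taylor series still converge in a slightly smaller neighborhood of $\overline\D$). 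By the remark following~\eqref{eqn:parw}, the pairings $\langle\cdot,\cdot\rangle_\delta$ on both sides of \eqref{item:ptspar1} and \eqref{item:ptspar2} coincide with the corresponding $\langle\langle\cdot,\cdot\rangle\rangle_\delta$, and the task reduces to identities for the direct pairings.

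Next, I expand $f(z)=\sum_n a_n z^n$ and $g(z)=\sum_n b_n z^n$ (uniformly convergent on $\overline\D$) and use the orthogonality of the monomials $\{z^n\}$, both with respect to $d\sigma$ and with respect to $d\nu_\delta$ for $\delta>0$. A short computation in polar coordinates (via the Beta function) gives
\[
c_n^\delta:=\int_\D|z|^{2n}\,d\nu_\delta(z)=\frac{n!\,\Gamma(\delta+1)}{\Gamma(n+\delta+1)}\quad(\delta>0),\qquad c_n^0:=\int_\bD|\z|^{2n}\,d\sigma=1,
\]
so that $\langle\langle f,g\rangle\rangle_\delta=\sum_n a_n\overline{b_n}\,c_n^\delta$. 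On the other hand, from the definition of $R_{\delta+1}^k$ one reads off
\[
R^k_{\delta+1}g(z)=\sum_n b_n\,\frac{\Gamma(\delta+1+k+n)\,\Gamma(\delta+1)}{\Gamma(\delta+1+n)\,\Gamma(\delta+1+k)}\,z^n,
\]
and multiplying this $n$-th factor by $c_n^{\delta+k}$ yields $c_n^\delta$ after a direct $\Gamma$-function cancellation. This proves the first equality in \eqref{item:ptspar1}; the second equality follows identically, since the factor introduced by $R^k_{\delta+1}$ is real and hence is unaffected by the complex conjugation in $\langle\langle\cdot,\cdot\rangle\rangle_{\delta+k}$, so it can be transferred from one slot to the other.

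For \eqref{item:ptspar2}, the same Taylor-series computation gives
\[
\langle\langle(1+R)^\tau f,(1+R)^{-\tau}g\rangle\rangle_\delta=\sum_n(n+1)^\tau a_n\,\overline{(n+1)^{-\tau}b_n}\,c_n^\delta=\sum_n a_n\overline{b_n}\,c_n^\delta=\langle\langle f,g\rangle\rangle_\delta,
\]
where we use that $\tau\in\R$ so $(n+1)^{\pm\tau}$ are real and cancel.

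\textbf{Main obstacle.} The only mild subtlety is making sure that $(1+R)^\tau$ is a well-defined operator on $H(\overline\D)$ for arbitrary real $\tau$. For $f$ holomorphic on $\{|z|<1+\varepsilon\}$ the coefficients $a_n$ decay geometrically, so the series $\sum_n(1+n)^\tau a_n z^n$ converges uniformly on a slightly smaller neighborhood of $\overline\D$ and the interchange of sum and integral in the computation above is justified; the rest is routine.
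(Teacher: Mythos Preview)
Your proof is correct. For part~\eqref{item:ptspar2} your argument coincides with the paper's: both invoke the orthogonality $\langle z^k,z^m\rangle_\delta=0$ for $k\ne m$ together with $(1+R)^\tau z^m=(1+m)^\tau z^m$.

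For part~\eqref{item:ptspar1}, however, you take a genuinely different route. The paper proves the case $k=1$ by Stokes' theorem (when $\delta=0$) and by the identity
\[
\delta(1-|z|^2)^{\delta-1}=(\delta+1)(1-|z|^2)^\delta-\overline\p\bigl(\overline z(1-|z|^2)^\delta\bigr)
\]
followed by integration by parts (when $\delta>0$), and then iterates. You instead compute both sides term by term in the Taylor expansion, reducing the identity to the $\Gamma$-function cancellation
\[
\frac{\Gamma(\delta+1+k+n)\,\Gamma(\delta+1)}{\Gamma(\delta+1+n)\,\Gamma(\delta+1+k)}\cdot c_n^{\delta+k}=c_n^\delta,
\]
which holds in one stroke for all $k$ and treats $\delta=0$ and $\delta>0$ uniformly. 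The paper's approach is more geometric and makes the role of the weight $(1-|z|^2)^{\delta-1}$ transparent; yours is purely computational but avoids the case split and the induction on $k$. One small remark: the Taylor series of $(1+R)^\tau f$ has the \emph{same} radius of convergence as that of $f$, since the multiplier $(1+n)^\tau$ has polynomial growth; your ``slightly smaller neighborhood'' is an unnecessary concession.
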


\begin{proof} 
Let us prove  \eqref{item:ptspar1} for $k=1$, that is 
$$
\langle f,g\rangle_\delta 
= \left\langle f,\left(1+\frac{R}{\delta+1}\right)g\right\rangle_{\delta+1} 
= \left\langle \left(1+\frac{R}{\delta+1}\right)f,g\right\rangle_{\delta+1}. 
$$
Observe that the second equality can be deduced from the first one by conjugation.

If $\delta=0$, then Stokes' theorem gives
\begin{align*}
\langle f,g\rangle_0 &=\frac{1}{2\pi i}\lim_{r\to 1^{-}}\int_\bD f(r\z)\overline{g(r\z)}\,\overline{\z}d\z
=\lim_{r\to 1^{-}}\int_\D \overline{\p}\left(\overline{z} f(rz)\overline{g(rz)}\right)\,d\nu(z)\\
&=\lim_{r\to 1^{-}}\int_\D  f(rz)\overline{((1+R)g)(rz)}\,d\nu(z)=\langle f,(1+R)g\rangle_{1}.
\end{align*}

The case $\delta>0$ follows from the identity 
$$
\delta(1-|z|^2)^{\delta-1}
=(\delta+1)(1-|z|^2)^\delta-\overline\p\left(\overline z(1-|z|^2)^\delta\right),
$$ 
and  integration by parts.

A simple iteration of these identities gives \eqref{item:ptspar1}.

Assertion \eqref{item:ptspar2} follows from the facts that $(1+R)^\tau z^m=(1+m)^\tau z^m$ and that 
$\langle z^k,z^m\rangle_\delta=0$, $k\ne m$.
\end{proof}

The next result extends the well known  duality  $(B^p_s)' \equiv B^{p'}_{-s}$ for the  case  $t=0$ (see \cite{Lu}).

\begin{prop}\label{prop:dualB}
Let $1<p<\infty$, $t\ge 0$ and $\theta\in \B_{p,t}$. If $s\in\R$, then, the dual of $B^p_{s}(\mu_t)$ with respect to the pairing $\langle \cdot,\cdot\rangle_t$ is the Besov space  $B^{p'}_{-s}(\mu^\prime_t)$.
\end{prop}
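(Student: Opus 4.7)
The plan is to reduce to the case $s \le 0$ (where $B^p_s(\mu_t)$ coincides with a weighted Bergman space) and then run the standard duality argument: Hahn--Banach to extend to a weighted $L^p$ space, Riesz representation, and the Bergman projection $\PN^M$ (bounded by Theorem~\ref{thm:opBt}) to land back in the holomorphic setting. The case $s > 0$ will follow by a reflexivity/symmetry argument: since $\theta \in \B_{p,t} \Leftrightarrow \theta^{-p'/p} \in \B_{p',t}$, applying the conclusion with $(p,s,\theta)$ replaced by $(p', -s, \theta^{-p'/p})$ gives $(B^{p'}_{-s}(\mu'_t))' \equiv B^p_s(\mu_t)$, and reflexivity then yields the identity we want. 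So assume $s < 0$ (the case $s = 0$ being a minor variant) and set $M := t - sp > t$; by Lemma~\ref{lem:embedBpt}, $\theta \in \B_{p,M}$, and by the paper's remark after the definition, $B^p_s(\mu_t) = H \cap L^p(\mu_M)$ with equivalent norm.

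\emph{Forward inclusion.} For $f, g \in H(\overline\D)$, Lemma~\ref{lem:ptspar}(i) with $k := k_{-s}$ gives $\langle f, g\rangle_t = \langle f,\, R^k_{t+1} g\rangle_{t+k}$, and by Proposition~\ref{prop:Bpsk} we have $R^k_{t+1} g \in H \cap L^{p'}(\mu'_{t+(s+k)p'})$ with norm $\approx \|g\|_{B^{p'}_{-s}(\mu'_t)}$. Splitting the integrand over $d\nu_{t+k}$ as $[\,f\,\theta^{1/p}(1-|z|^2)^\alpha\,]\cdot[\,R^k_{t+1}g\cdot\theta^{-1/p}(1-|z|^2)^\beta\,]$ with $\alpha,\beta$ chosen so that the $p$-th and $p'$-th powers reproduce the two target weight measures (a short computation using $1/p+1/p'=1$ shows such a choice is forced and consistent), Hölder's inequality yields
\[
|\langle f, g\rangle_t| \lesssim \|f\|_{L^p(\mu_M)}\,\|R^k_{t+1}g\|_{L^{p'}(\mu'_{t+(s+k)p'})} \approx \|f\|_{B^p_s(\mu_t)}\|g\|_{B^{p'}_{-s}(\mu'_t)},
\]
and density of $H(\overline\D)$ extends the bound to the whole Besov spaces.

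\emph{Backward inclusion.} Given $\ell \in (B^p_s(\mu_t))'$: since $B^p_s(\mu_t) = H \cap L^p(\mu_M)$ is closed in $L^p(\mu_M)$, Hahn--Banach and Riesz produce $H_0 \in L^{p'}(\mu_M)$ with $\ell(f) = \int f\,\overline{H_0}\,d\mu_M$. By Theorem~\ref{thm:opBt} (applied to $\theta \in \B_{p,M}$), $\PN^M$ is bounded on $L^p(\mu_M)$ and reproduces holomorphic elements, so $f = \PN^M(f)$; using this and Fubini rewrites $\ell(f) = \int f\,\overline{G_0}\,d\nu_M$ with $G_0 := \PN^M(H_0\theta)$. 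The elementary identity $\|H_0\theta\|_{L^{p'}(\mu'_M)} = \|H_0\|_{L^{p'}(\mu_M)}$ combined with boundedness of $\PN^M$ on $L^{p'}(\mu'_M)$ (Theorem~\ref{thm:opBt} applied to $\theta^{-p'/p} \in \B_{p',M}$) places $G_0 \in H \cap L^{p'}(\mu'_M)$ with $\|G_0\| \lesssim \|\ell\|$. Finally, set $g := c\cdot \PN^{M,t}(G_0) = c\cdot R^{sp}_{1+M}(G_0)$ (fractional anti-derivative, in the sense of~\eqref{eqn:Rts}); the chain of identifications $H\cap L^{p'}(\mu'_M) = B^{p'}_0(\mu'_M) \xrightarrow{R^{sp}_{1+M}} B^{p'}_{-sp}(\mu'_M) = B^{p'}_{-s}(\mu'_t)$, where the last equality uses~\eqref{eqn:correind} with $t_0 = M-t = -sp$, shows $g \in B^{p'}_{-s}(\mu'_t)$ with the right norm bound. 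A second Fubini, using $\PN^t(f)=f$ for $f\in H(\overline\D)$, verifies $\langle f, g\rangle_t = \int f\,\overline{G_0}\,d\nu_M = \ell(f)$, and density closes the argument.

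\emph{Main obstacle.} The essential ingredient is the boundedness of $\PN^M$ on $L^p(\mu_M)$ and $L^{p'}(\mu'_M)$, i.e.\ Theorem~\ref{thm:opBt}, so the B\'ekoll\'e hypothesis on $\theta$ enters precisely at the Bergman-projection step of the backward inclusion. A subsidiary technical point is the identification of the anti-derivative $\PN^{M,t}(G_0)$ with an element of $B^{p'}_{-s}(\mu'_t)$ of controlled norm, which reduces via~\eqref{eqn:correind} and~\eqref{eqn:Rts} to standard isomorphism properties of fractional derivatives on the weighted Besov scale.
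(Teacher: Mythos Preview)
Your argument is correct in outline, but it takes a genuinely different route from the paper's. The paper isolates a \emph{single} pivot case $s=-1/p$: there $B^p_{-1/p}(\mu_t)=H\cap L^p(\mu_{t+1})$ and the pairing $\langle\langle\cdot,\cdot\rangle\rangle_{t+1}$ is precisely the $L^p$--$L^{p'}$ pairing for $d\nu_{t+1}$, so Hahn--Banach plus Theorem~\ref{thm:opBt} yield the duality with no need to change the pairing index. The general $s\in\R$ is then obtained in one stroke by conjugating with the isomorphism $(1+R)^{s+1/p}$ and invoking Lemma~\ref{lem:ptspar}; in particular no reflexivity argument is needed for $s>0$. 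Your approach instead runs the Hahn--Banach/projection step separately for every $s<0$ (with $M=t-sp$), then uses the fractional antiderivative $\PN^{M,t}$ to pass from the pairing at level $M$ back to level $t$, and finally handles $s>0$ by the symmetry $\theta\in\B_{p,t}\Leftrightarrow \theta^{-p'/p}\in\B_{p',t}$ together with reflexivity of closed subspaces of $L^{p'}$. Both work; the paper's is more economical, while yours makes the role of the Bergman projection at each level $M$ explicit.

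One small slip to tighten: the identification $H\cap L^{p'}(\mu'_M)=B^{p'}_0(\mu'_M)$ in your chain is not a direct instance of Proposition~\ref{prop:Bpsk}, which requires $k>s$ and hence excludes $k=0$ at $s=0$ (the same issue lurks in your remark that ``the case $s=0$ is a minor variant''). The equality is true, but it needs one more appeal to Theorem~\ref{thm:opBt}. Cleaner is to bypass it entirely: for any integer $k>-s$ you have $R^k_{1+t}g=\PN^{M,t+k}(G_0)$, and since $1-|z|^2\le 2|1-z\overline w|$,
\[
(1-|z|^2)^{k+sp}\,|\PN^{M,t+k}(G_0)(z)|\lesssim \PP^{M}(|G_0|)(z),
\]
so Theorem~\ref{thm:opBt} (applied to $\theta^{-p'/p}\in\B_{p',M}$) gives $\|g\|_{B^{p'}_{-s}(\mu'_t)}=\|g\|_{B^{p'}_{-sp}(\mu'_M)}\lesssim\|G_0\|_{L^{p'}(\mu'_M)}$ directly.
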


\begin{proof}
As in the unweighted case, from the duality 
$(L^p(\mu_t))'\equiv L^{p'}(\mu_t^{\prime})$, 
with respect to the pairing $\langle\langle\cdot,\cdot\rangle\rangle_{t+1}$, Theorem \ref{thm:opBt} 
and the Hahn-Banach theorem, we obtain  
$$
\left(B^p_{-1/p}(\mu_t)\right)'=(H\cap L^p(\mu_t))'\equiv H\cap L^{p'}(\mu_t^{\prime})= B^{p'}_{-1/p'}(\mu^\prime_t),
$$
with respect to the pairing $\langle\langle\cdot,\cdot\rangle\rangle_{t+1}$, and consequently with respect to 
the pairing $\langle\cdot,\cdot\rangle_{t+1}$.

Next, we use the above result and Lemma \ref{lem:ptspar} to prove the general case.

If $g\in B^{p'}_{-s}(\mu_t^\prime)$ and $f\in B^{p}_{s}(\mu_t)$, then 
\begin{align*}
|\langle f,g\rangle_t|=|\langle R^1_t f,g\rangle_{t+1}|
\le \|g\|_{L^{p'}(\mu^\prime_{sp'+t})}\|R^1_t f\|_{L^p(\mu_{(1-s)p+t})}
\approx \|g\|_{B^{p'}_{-s}(\mu_t^\prime)}\|f\|_{B^p_s(\mu_t)}.
\end{align*}
Thus, the map $g\to \langle \cdot,g\rangle_t$ is an injective map from $B^{p'}_{-s}(\mu_t^\prime)$ to   $\left(B^p_s(\mu_t)\right)'$.

Let us prove that this map is surjective. 
If $\Lambda$ is a linear form on $B^p_s(\mu_t)$, then $\Lambda\circ (1+R)^{-s-1/p}$ is also a linear form on $B^p_{-1/p}(\mu_t)$. 
Thus, there exists $g\in B^{p'}_{-1/p'}(\mu^\prime_t)$ such that for any $h\in B^p_{-1/p}(\mu_t)$, 
\begin{align*}
\Lambda\circ (1+R)^{-s-1/p}(h)&=\langle h,g\rangle_{t+1}
=\langle (1+R)^{-s-1/p}h, (1+R)^{s+1/p}g\rangle_{t+1}\\
&=\langle (1+R)^{-s-1/p}h, R^{-1}_{1+t}(1+R)^{s+1/p}g\rangle_{t},
\end{align*}
where in the second identity we have used \eqref{item:ptspar2} in Lemma \ref{lem:ptspar} and in the last one \eqref{item:ptspar1} in the same lemma.

Since for any $f\in B^p_s(\mu_t)$, we have that $h=(1+R)^{s+1/p}(f)\in B^p_{-1/p}(\mu_t)$, 
we deduce that 
$\Lambda (f)=\langle f,G\rangle_t$ with $G:=R^{-1}_{1+t}(1+R)^{s+1/p}g\in B^{p'}_{-s}(\mu^\prime_t)$. 
\end{proof}

\begin{cor}\label{cor:dualB}
Let $1<p<\infty$, $t'>t\ge 0$ and $\theta\in \B_{p,t}$. 
If $s\in\R$, then $\left(B^p_{s}(\mu_t)\right)'=B^{p'}_{-s+t-t'}(\mu^\prime_t)$ 
with respect to the pairing $\langle \cdot,\cdot\rangle_{t'}$.

In particular, if $t=0$, then $(B^p_s)' \equiv B^{p'}_{-s-t'}$, with respect to the pairing $\langle \cdot,\cdot\rangle_{0}$.
\end{cor}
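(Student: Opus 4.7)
The strategy is to reduce the statement to Proposition \ref{prop:dualB} by comparing the pairings $\langle\cdot,\cdot\rangle_t$ and $\langle\cdot,\cdot\rangle_{t'}$ via a fractional radial operator. Set $\tau:=t'-t>0$. The key step is to establish the fractional analogue of Lemma \ref{lem:ptspar}\eqref{item:ptspar1}: for all $f,g\in H(\overline\D)$,
$$
\langle f,g\rangle_t=\langle f,R^\tau_{1+t} g\rangle_{t'}, \qquad (\star)
$$
where $R^\tau_{1+t}$ is the fractional radial operator from \eqref{eqn:Rts} (interpreted via $\PN^{0,\tau}$ when $t=0$).

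To establish $(\star)$ I would expand $R^\tau_{1+t}g$ by its defining integral and apply Fubini. For $t>0$ the resulting inner integral
$$
\int_\D f(z)\,\frac{(t+\tau)(1-|z|^2)^{t+\tau-1}}{(1-\overline z w)^{1+t+\tau}}\,d\nu(z)
$$
equals $f(w)$ by the weighted Bergman reproducing formula, for $f\in H(\overline\D)$ and $w\in\D$. The case $t=0$ is handled the same way, replacing the Bergman reproducing kernel by the corresponding Cauchy-type kernel at boundary points; this is licit because $f\in H(\overline\D)$ extends smoothly across $\bD$ (alternatively, both sides of $(\star)$ are sesquilinear and can be verified directly on monomials $\{z^n\}$ via beta-function computations). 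Since $R^\tau_{1+t}$ and $R^{-\tau}_{1+t}$ are mutually inverse, $(\star)$ is equivalent to
$$
\langle f,h\rangle_{t'}=\langle f,R^{-\tau}_{1+t}h\rangle_t,\qquad f,h\in H(\overline\D).
$$

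With this pairing swap in hand, the corollary is essentially bookkeeping. First, $R^{-\tau}_{1+t}$ is a topological isomorphism $B^{p'}_{-s+t-t'}(\mu^\prime_t)\to B^{p'}_{-s}(\mu^\prime_t)$, the fractional analogue of the integer index shift of $(1+R)^{s'}$ already invoked after Theorem \ref{thm:BF}; this is a consequence of Proposition \ref{prop:Bpsk} together with the $L^{p'}(\mu^\prime_t)$-boundedness of the operators $\PN^{N,M}$ furnished by Theorem \ref{thm:opBt} (using $\theta\in\B_{p,t}\Leftrightarrow\theta^{-p'/p}\in\B_{p',t}$). Thus, for $h\in B^{p'}_{-s+t-t'}(\mu^\prime_t)$ and $f\in B^p_s(\mu_t)$, the pairing swap combined with Proposition \ref{prop:dualB} yields
$$
|\langle f,h\rangle_{t'}|=|\langle f,R^{-\tau}_{1+t}h\rangle_t|\lesssim \|f\|_{B^p_s(\mu_t)}\|R^{-\tau}_{1+t}h\|_{B^{p'}_{-s}(\mu^\prime_t)}\lesssim \|f\|_{B^p_s(\mu_t)}\|h\|_{B^{p'}_{-s+t-t'}(\mu^\prime_t)},
$$
so $h\mapsto\langle\cdot,h\rangle_{t'}$ is a bounded injection into $(B^p_s(\mu_t))'$. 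For surjectivity, given $\Lambda\in (B^p_s(\mu_t))'$ one obtains $G\in B^{p'}_{-s}(\mu^\prime_t)$ with $\Lambda(f)=\langle f,G\rangle_t$ via Proposition \ref{prop:dualB}, and then $h:=R^\tau_{1+t}G\in B^{p'}_{-s+t-t'}(\mu^\prime_t)$ satisfies $\Lambda(f)=\langle f,h\rangle_{t'}$ by $(\star)$. The particular case $t=0$ is the specialization $\theta=1$ under the convention $\B_{p,0}=\{1\}$, where $\mu_0=\mu^\prime_0=d\sigma$ and $B^p_s(\mu_0)=B^p_s$.

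I expect the main obstacle to be the fractional identity $(\star)$: the Fubini interchange and the reproducing-formula step must be justified, and in the $t=0$ case the reproducing formula is being applied at boundary points. For $f,g\in H(\overline\D)$ these points are routine but deserve to be written carefully; once $(\star)$ is established, the remainder of the argument is formal.
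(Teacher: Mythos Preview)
Your proposal is correct, but it takes a noticeably longer route than the paper's own argument. You set up and justify a fractional pairing-swap identity $\langle f,g\rangle_t=\langle f,R^\tau_{1+t}g\rangle_{t'}$ and then transport the duality from Proposition~\ref{prop:dualB} through the isomorphism $R^\tau_{1+t}$. This works, but it requires proving the fractional analogue of Lemma~\ref{lem:ptspar}\eqref{item:ptspar1} (which the paper never states) and carefully handling the $t=0$ boundary case that you yourself flag as delicate.

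The paper instead bypasses fractional operators entirely by using the re-indexing identity \eqref{eqn:correind}, $B^p_s(\mu_t)=B^p_{s+(t'-t)/p}(\mu_{t'})$, together with Lemma~\ref{lem:embedBpt} ($\theta\in\B_{p,t}\Rightarrow\theta\in\B_{p,t'}$). This allows Proposition~\ref{prop:dualB} to be applied \emph{directly} at level $t'$, giving
\[
\bigl(B^p_s(\mu_t)\bigr)'=\bigl(B^p_{s+(t'-t)/p}(\mu_{t'})\bigr)'\equiv B^{p'}_{-s-(t'-t)/p}(\mu'_{t'})=B^{p'}_{-s+t-t'}(\mu'_t),
\]
the last equality being \eqref{eqn:correind} again with $p'$ in place of $p$. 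The whole proof is one line. What the paper's approach buys is brevity and the avoidance of any new analytic identity; what your approach buys is an explicit intertwining operator between the two dualities, which could be useful elsewhere but is unnecessary here.
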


\begin{proof}
By the above proposition, we have 
$$
\left(B^p_{s}(\mu_t) \right)'\equiv \left(B^p_{s+(t'-t)/p}(\mu_{t'})\right)' 
\equiv \left( B^{p'}_{-s-(t'-t)/p}(\mu^\prime_{t'})\right)
=\left( B^{p'}_{-s+t-t'}(\mu^\prime_{t})\right)
$$
which ends the proof.
\end{proof}

\section{Estimates of $\|b\|_{CB^p_s(\mu_t)}$ and proof 
of Theorem \ref{thm:predualBinfs}}\label{sec:snegative}

We introduce a variation in the definition of 
the constants $\Gamma_1(b) $ and $\Gamma_2(b) $ in Theorem \ref{thm:BF}, 
which allow us to cover some general situations.

\begin{defn}
If $1<p<\infty$, $s_0,s_1\in\R$, $t\ge 0$, $\theta\in \B_{p,t}$ and $b\in H$, then 
$$
\Gamma_3(b)=\Gamma(b,p,s_0,s_1,t):=\sup_{0\ne f,g\in H(\overline B)}
\frac{|\langle fg,b\rangle_t|}{\|f\|_{B^p_{s_0}(\mu_t)}\,\|g\|_{B^{p'}_{s_1}(\mu^\prime_t)}}.
$$
\end{defn}

We will start proving the following theorem.

\begin{thm}\label{thm:normG3}
Let $1<p<\infty$, $s_0,s_1\in\R$, $t\ge 0$ and $\theta\in \B_{p,t}$. 
Then $\|b\|_{B^\infty_{-s_0-s_1}}\lesssim \Gamma_3(b).$

If $s_0,s_1<0$, then the converse inequality holds.
\end{thm}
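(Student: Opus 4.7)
The two bounds are proved by different methods. The lower estimate on $\|b\|_{B^\infty_{-s_0-s_1}}$ is extracted by testing $\Gamma_3(b)$ against explicit reproducing-kernel-type functions, while the converse (available only when $s_0,s_1<0$) follows from Lemma \ref{lem:ptspar} combined with a pointwise Bloch bound and H\"older's inequality.

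For the lower bound, fix $z_0\in\D$, an integer $m>-s_0-s_1$, and positive numbers $A_0,A_1$ with $A_0+A_1=t+m+1$, chosen large enough so that $M_1:=(A_0+s_0)p-1-t$ and $M_2:=(A_1+s_1)p'-1-t$ satisfy the largeness hypothesis of Proposition \ref{prop:BP}; by increasing $m$ this is always achievable. For any $C_0,C_1\in\R$ set
$$
f(z):=\frac{(1-|z_0|^2)^{C_0}}{(1-\overline{z_0}z)^{A_0}},\qquad g(z):=\frac{(1-|z_0|^2)^{C_1}}{(1-\overline{z_0}z)^{A_1}},
$$
which belong to $H(\overline\D)$. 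A Taylor-series calculation (equivalently, Lemma \ref{lem:ptspar} and the reproducing formula for $d\nu_t$) gives
$$
\langle fg,b\rangle_t=(1-|z_0|^2)^{C_0+C_1}\,\overline{R^m_{t+1}b(z_0)}.
$$
Differentiating $f$ an integer $k>s_0$ times and invoking Proposition \ref{prop:BP} (with the shift $\B_{p,t}\subset\B_{p,t+(k-s_0)p}$ from Lemma \ref{lem:embedBpt}) yields
$$
\|f\|_{B^p_{s_0}(\mu_t)}^p\lesssim(1-|z_0|^2)^{C_0p-M_1}\,\frac{\mu_t(T_{z_0})}{\nu_t(T_{z_0})}\,\B_{p,t}(\theta)^p,
$$
and analogously for $g$ with $\mu^\prime_t$ in place of $\mu_t$. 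The arithmetic identity $M_1/p+M_2/p'=m+s_0+s_1$ combined with the defining inequality of $\B_{p,t}$ gives
$$
\|f\|_{B^p_{s_0}(\mu_t)}\|g\|_{B^{p'}_{s_1}(\mu^\prime_t)}\lesssim(1-|z_0|^2)^{C_0+C_1-m-s_0-s_1}\,\B_{p,t}(\theta)^3.
$$
Dividing and supping over $z_0$ produces $|R^m_{t+1}b(z_0)|(1-|z_0|^2)^{m+s_0+s_1}\lesssim\Gamma_3(b)$, i.e., $\|b\|_{B^\infty_{-s_0-s_1}}\lesssim\Gamma_3(b)$.

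For the converse under $s_0,s_1<0$, set $k:=k_{-s_0-s_1}$. Lemma \ref{lem:ptspar} gives $\langle fg,b\rangle_t=\langle fg,R^k_{t+1}b\rangle_{t+k}$, and the Bloch bound $|R^k_{t+1}b(z)|\lesssim\|b\|_{B^\infty_{-s_0-s_1}}(1-|z|^2)^{-k-s_0-s_1}$ together with $d\nu_{t+k}\approx(1-|z|^2)^k\,d\nu_t$ reduces the pairing to
$$
|\langle fg,b\rangle_t|\lesssim\|b\|_{B^\infty_{-s_0-s_1}}\int_\D|f(z)|(1-|z|^2)^{-s_0}\,|g(z)|(1-|z|^2)^{-s_1}\,d\nu_t(z).
$$
Splitting $d\nu_t=\theta^{1/p}\cdot\theta^{-1/p}d\nu_t$ and applying H\"older's inequality with exponents $p,p'$ yields the product
$$
\Bigl(\int_\D|f|^p(1-|z|^2)^{-s_0p}\,d\mu_t\Bigr)^{1/p}\Bigl(\int_\D|g|^{p'}(1-|z|^2)^{-s_1p'}\,d\mu^\prime_t\Bigr)^{1/p'}.
$$
Since $s_0,s_1<0$ we have $k_{s_0}=k_{s_1}=0$, so the remark following the definition of $B^p_s(\mu_t)$ identifies these $L^p$-norms (up to constants) with $\|f\|_{B^p_{s_0}(\mu_t)}$ and $\|g\|_{B^{p'}_{s_1}(\mu^\prime_t)}$, completing the estimate.

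The delicate step is the first direction, where the parameters $A_0,A_1$ must simultaneously satisfy the algebraic constraint $A_0+A_1=t+m+1$ and the largeness conditions of Proposition \ref{prop:BP} applied to both $\theta$ and $\theta^{-p'/p}$; the freedom to choose $m$ arbitrarily large removes any obstruction. The key algebraic identity is $M_1/p+M_2/p'=m+s_0+s_1$, which produces exactly the Bloch exponent. The boundary case $t=0$ is handled by reducing to some $t'>0$ via $B^p_s(\mu_0)=B^p_{s+t'/p}(\nu_{t'})$ together with the pairing shift of Lemma \ref{lem:ptspar}.
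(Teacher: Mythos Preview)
Your proposal is correct and follows essentially the same approach as the paper: the lower bound via reproducing-kernel test functions whose Besov norms are controlled by the B\'ekoll\'e machinery, and the converse via the pairing shift of Lemma~\ref{lem:ptspar}, the Bloch pointwise bound, and H\"older. The paper packages the first direction as Lemmas~\ref{lem:fznorms} and~\ref{lem:BinG3} (choosing the specific splitting $A_0=(1+t+k)/p$, $A_1=(1+t+k)/p'$ and invoking Corollary~\ref{cor:BP} rather than Proposition~\ref{prop:BP} directly) and the converse as Lemma~\ref{lem:G3inB}, but the arguments coincide with yours.
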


The proof of this result will be a consequence of Lemmas \ref{lem:BinG3} and \ref{lem:G3inB}.

\begin{lem} \label{lem:fznorms}
Let $1<p<\infty$, $s_0,s_1\in\R$, $t\ge 0$ and $\theta\in \B_{p,t}$. Let 
\begin{equation}\label{eqn:fznorms}
\tau>\lambda:=(1+t)(\max\{p,p'\}-1)+\max\{0,-s_0p,-s_1 p'\}.
\end{equation}

For  $z\in\D$, we consider the functions  
$$
f_z(w)= \frac{1}{(1-w\overline z)^{(1+t+\tau)/p}}
\quad\text{and}\quad
g_z(w)= \frac{1}{(1-w\overline z)^{(1+t+\tau)/p'}}.
$$

Then 
$$
\|f_z\|_{B^p_{s_0}(\mu_t)}\,\|g_z\|_{B^{p'}_{s_1}(\mu^\prime_t)}
\lesssim \B_{p,t}(\theta)^2(1-|z|^2)^{-\tau-s_0-s_1}.
$$
\end{lem}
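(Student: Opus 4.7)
The plan is to bound $\|f_z\|_{B^p_{s_0}(\mu_t)}$ and $\|g_z\|_{B^{p'}_{s_1}(\mu^\prime_t)}$ separately using Propositions~\ref{prop:Bpsk} and~\ref{prop:BP}; the factor $\B_{p,t}(\theta)^2$ arises as the product of two single-weight estimates, one for $\theta$ and one for the dual weight $\theta^{-p'/p}$.

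For the first norm, fix any integer $k_0>s_0$. Proposition~\ref{prop:Bpsk} with $D=(1+R)$ gives $\|f_z\|^p_{B^p_{s_0}(\mu_t)}\approx\int_\D|(1+R)^{k_0}f_z(w)|^p(1-|w|^2)^{(k_0-s_0)p}\,d\mu_t(w)$. An induction on $k_0$ based on $R(1-w\overline z)^{-\alpha}=\alpha w\overline z(1-w\overline z)^{-\alpha-1}$ yields the pointwise bound $|(1+R)^{k_0}f_z(w)|\lesssim|1-w\overline z|^{-(1+t+\tau)/p-k_0}$. Expanding $d\mu_t=t\theta(1-|w|^2)^{t-1}d\nu$, the integral becomes a constant multiple of $\PN^{N_0,M_0}(\theta)(z)$ with
$$N_0=t+(k_0-s_0)p,\qquad M_0=t+\tau+k_0 p,$$
so that $M_0-N_0=\tau+s_0 p$. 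The pointwise inequality $1-|w|^2\le 2|1-w\overline z|$ then allows one to shrink both indices down by $N_0-t$ while preserving their difference, yielding $\PN^{N_0,M_0}(\theta)(z)\lesssim\PN^{t,\,t+\tau+s_0 p}(\theta)(z)$. The right-hand inequality in~\eqref{eqn:estB1} of Proposition~\ref{prop:BP}, applied with $M=\tau+s_0 p$, now gives $\PN^{t,\,t+\tau+s_0 p}(\theta)(z)\lesssim\B_{p,t}(\theta)^p(1-|z|^2)^{-\tau-s_0 p}\,\mu_t(T_z)/\nu_t(T_z)$. Taking $p$-th roots,
$$\|f_z\|_{B^p_{s_0}(\mu_t)}\lesssim \B_{p,t}(\theta)\,(1-|z|^2)^{-\tau/p-s_0}\left(\frac{\mu_t(T_z)}{\nu_t(T_z)}\right)^{1/p}.$$
The symmetric argument for $g_z$, invoking~\eqref{eqn:estB2} with the dual weight $\theta^{-p'/p}$ together with the identity $\B_{p',t}(\theta^{-p'/p})=\B_{p,t}(\theta)$, yields the analogous bound with $\mu^\prime_t$ and exponent $-\tau/p'-s_1$. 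Multiplying the two estimates and absorbing the leftover factor $(\mu_t(T_z)/\nu_t(T_z))^{1/p}(\mu^\prime_t(T_z)/\nu_t(T_z))^{1/p'}$ via the definition of $\B_{p,t}(\theta)$ produces the stated inequality.

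The main obstacle is verifying that the hypothesis~\eqref{eqn:fznorms} on $\tau$ is exactly sufficient to guarantee the exponent conditions $\tau+s_0 p>(1+t)(p-1)$ and $\tau+s_1 p'>(1+t)(p'-1)$ demanded by~\eqref{eqn:estB1} and~\eqref{eqn:estB2} after the shrinking step. This reduces to a short case check: the summand $\max\{0,-s_0 p,-s_1 p'\}$ in $\lambda$ is built precisely so that for $s_0<0$ the term $-s_0 p$ cancels the negative contribution of $s_0 p$, while for $s_0\ge 0$ the contribution $s_0 p$ is harmless against $(1+t)(\max\{p,p'\}-1)\ge(1+t)(p-1)$; the treatment of $s_1$ is symmetric. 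A secondary routine point is the derivative estimate on $f_z$ for the non-integer exponent $(1+t+\tau)/p$, handled by the induction mentioned above.
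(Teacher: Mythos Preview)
Your argument is correct and follows essentially the same route as the paper: both compute the norms via Proposition~\ref{prop:Bpsk}, reduce the resulting integrals using $1-|w|^2\le 2|1-w\overline z|$, and invoke the right-hand estimates in Proposition~\ref{prop:BP}; the paper packages this as a single application of Corollary~\ref{cor:BP} with a common pair $(N,M)$, while you apply \eqref{eqn:estB1} and \eqref{eqn:estB2} separately with $M=\tau+s_0p$ and $M=\tau+s_1p'$ and then multiply. Two small remarks: the kernels carry absolute values, so write $\PP^{N_0,M_0}$ rather than $\PN^{N_0,M_0}$; and your final ``absorption'' of the leftover factor $(\mu_t(T_z)/\nu_t(T_z))^{1/p}(\mu^\prime_t(T_z)/\nu_t(T_z))^{1/p'}\le\B_{p,t}(\theta)$ actually produces $\B_{p,t}(\theta)^3$ rather than the stated $\B_{p,t}(\theta)^2$---a harmless discrepancy since $\theta$ is fixed throughout.
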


\begin{proof}
If $m>s_0$ is a non-negative integer, then 
\begin{align*}
\|f_z\|_{B^p_{s_0}(\mu_t)}^p\approx 
\int_\D \frac{(1-|w|^2)^{t+(m-s_0)p-1}}{|1-z\overline w|^{1+t+\tau+mp}}\theta(w)d\nu(w).
\end{align*}
Analogously, if $m>s_1$, then 
\begin{align*}
\|g_z\|_{B^{p'}_{s_1}(\mu^{\prime}_t)}^{p'}\approx
\int_\D \frac{(1-|w|^2)^{t+(m-s_1)p'-1}}{|1-z\overline w|^{1+t+\tau+mp'}}\theta^{-p'/p}(w)d\nu(w).
\end{align*}

Therefore, if $N,M$ satisfy $0<N<\min\{(m-s_0)p,(m-s_1)p'\}$ and $(1+t+N)(\max\{p,p'\}-1)<M<\min\{k\tau+s_0p,\tau+s_1p'\}$, then 
the estimate $1-|z|^2\le 2|1-w\overline z|$ and Corollary \ref{cor:BP} give
\begin{align*}
&\|f_z\|_{B^p_{s_0}(\mu_t)}\|g_z\|_{B^{p'}_{s_1}(\mu^{\prime}_t)}\\
&\lesssim (1-|z|^2)^{M-\tau-s_0-s_1}
\left(\PP^{t+N,t+N+M}(\theta)(z)\right)^{1/p}\left(\PP^{t+N,t+N+M}(\theta^{-p'/p})(z)\right)^{1/p'}\\
&\lesssim \B_{p,t}(\theta)^2 (1-|z|^2)^{-\tau-s_0-s_1},
\end{align*}
which ends the proof.
\end{proof}

\begin{lem} \label{lem:BinG3}
Let $1<p<\infty$, $s_0,s_1\in\R$, $t\ge 0$, $\theta\in \B_{p,t}$ and $b\in H$.  Then
$\|b\|_{B^\infty_{-s_0-s_1}}\lesssim \Gamma_3(b)$.
\end{lem}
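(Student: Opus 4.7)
The plan is to test the bilinear form against the pair $(f_z,g_z)$ from Lemma \ref{lem:fznorms} and thereby recover a fractional derivative of $b$ at the arbitrary point $z\in\D$. Fix such a $z$ and choose $\tau>\max\{\lambda,-s_0-s_1\}$, which is admissible in Lemma \ref{lem:fznorms}. Observe that
\[
(f_z g_z)(w)=\frac{1}{(1-w\overline z)^{1+t+\tau}},
\]
which by \eqref{eqn:Rtk} together with its fractional extension \eqref{eqn:Rts} equals $R^\tau_{1+t,w}K_z(w)$, where $K_z(w)=(1-w\overline z)^{-(1+t)}$ is the weighted Bergman kernel.

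The key step is to establish the identity
\[
\langle f_z g_z,b\rangle_t=\overline{R^\tau_{1+t}b(z)}.
\]
For $b\in H(\overline\D)$, the pairing reduces to $\int_\D\overline{b(w)}(1-w\overline z)^{-(1+t+\tau)}\,d\nu_t(w)$, and this is precisely the complex conjugate of the defining integral $R^\tau_{1+t}b(z)=\PN^{t,t+\tau}(b)(z)$ from \eqref{eqn:Rts}, so the identity is immediate. To handle an arbitrary $b\in H(\D)$ I would insert the dilations $b_r(w)=b(rw)\in H(\overline\D)$, use the relation $(f_zg_z)(rw)=(f_{rz}g_{rz})(w)$, apply the previous step to $\langle\langle f_{rz}g_{rz},b_r\rangle\rangle_t$, and observe that $R^\tau_{1+t}b_r(rz)=R^\tau_{1+t}b(r^2z)$ by comparison of Taylor coefficients. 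Letting $r\to 1^-$ in \eqref{eqn:parw} and using continuity of $R^\tau_{1+t}b$ on $\D$ then yields the identity.

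Granted the identity, the definition of $\Gamma_3(b)$ combined with Lemma \ref{lem:fznorms} gives
\[
|R^\tau_{1+t}b(z)|=|\langle f_z g_z,b\rangle_t|\le \Gamma_3(b)\,\|f_z\|_{B^p_{s_0}(\mu_t)}\|g_z\|_{B^{p'}_{s_1}(\mu^\prime_t)}\lesssim \Gamma_3(b)\,(1-|z|^2)^{-\tau-s_0-s_1}.
\]
Taking the supremum over $z\in\D$ yields
\[
\sup_{z\in\D}(1-|z|^2)^{\tau-(-s_0-s_1)}\,|R^\tau_{1+t}b(z)|\lesssim \Gamma_3(b),
\]
and since $\tau>-s_0-s_1$, the left-hand side is a standard equivalent norm on $B^\infty_{-s_0-s_1}$ (the Bloch-type analogue of Proposition \ref{prop:Bpsk}, permitting $R^\tau_{1+t}$ as the differential operator of order exceeding $-s_0-s_1$). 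This proves the desired estimate.

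The main technical hurdle is the identity $\langle f_z g_z,b\rangle_t=\overline{R^\tau_{1+t}b(z)}$ for $b\in H(\D)$ rather than $H(\overline\D)$, where the pairing is only defined by the limit in \eqref{eqn:parw}; the dilation-and-limit argument handles this cleanly, after which Lemma \ref{lem:fznorms} and the equivalence of Bloch-type norms give the result directly.
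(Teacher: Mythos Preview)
Your proof is correct and follows essentially the same approach as the paper: both test the bilinear form against the pair $(f_z,g_z)$ from Lemma~\ref{lem:fznorms}, identify $\langle f_z g_z,b\rangle_t$ with $\overline{R^\tau_{1+t}b(z)}$, and conclude via the norm bound of that lemma. The only minor difference is that the paper takes $\tau=k$ a positive integer (so $R^k_{1+t}$ is a genuine differential operator and the limit is framed through the Cauchy formula), whereas you allow fractional $\tau$ and spell out the dilation argument explicitly; note also that your extra requirement $\tau>-s_0-s_1$ is in fact already implied by $\tau>\lambda$.
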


\begin{proof}
We want to prove that for some positive integer $k$, we have 
$$
\|b\|_{B^\infty_{-s_0-s_1}}\approx \sup_{z\in\D}(1-|z|^2)^{k+s_0+s_1}|R^k_{1+t} b(z)|\lesssim \Gamma_3(b).
$$

By Cauchy formula, we have
\begin{align*}
R^k_{1+t} b(z)&=t\lim_{r\to1^{-}}R^k_{1+t}\int_\D b(rw) 
\frac{(1-|w|^2)^{t-1}}{(1-rz\overline  w)^{1+t}}d\nu(w)\\
&=t\lim_{r\to1^{-}}\int_\D b(rw) 
\frac{(1-|w|^2)^{t-1}}{(1-rz\overline  w)^{1+t+k}}d\nu(w).
\end{align*} 

Assume that $k$ is a positive integer satisfying  \eqref{eqn:fznorms}, and let 
$$
f_z(w)= \frac{1}{(1-w\overline z)^{(1+t+k)/p}}
\quad\text{and}\quad
g_z(w)= \frac{1}{(1-w\overline z)^{(1+t+k)/p'}}.
$$
Since
$
|R^k_{1+t} g(z)|=|\langle f_zg_z,b\rangle_t|, 
$
Lemma \ref{lem:fznorms} gives 
$$
|R^k_{1+t} b(z)|\le \Gamma_3(b)
\|f_z\|_{B^p_{s_0}(\mu_t)}\,\|g_z\|_{B^{p'}_{s_1}(\mu^\prime_t)}
\lesssim \Gamma_3(b)(1-|z|^2)^{-k-s_0-s_1},
$$
which concludes the proof.
\end{proof}

\begin{cor} \label{cor:Bloch} 
Let $1<p<\infty$ and $0<s<1$. 
If $b$ satisfies condition \eqref{item:BF2} in Theorem \ref{thm:BF}, that is $\Gamma_3(b,p,s,-s,t)<\infty$, then 
$b\in B^p_{s}(\mu_t)\cap B^\infty_0$.
\end{cor}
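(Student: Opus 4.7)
The plan is to establish the two inclusions $b\in B^\infty_0$ and $b\in B^p_s(\mu_t)$ separately, each as a fairly direct consequence of machinery already developed in the paper. The Bloch inclusion is immediate: apply Lemma \ref{lem:BinG3} with $s_0=s$ and $s_1=-s$ so that $-s_0-s_1=0$, yielding
\[
\|b\|_{B^\infty_0}\lesssim \Gamma_3(b,p,s,-s,t)<\infty.
\]

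For the Besov inclusion, the strategy is to test $\Gamma_3(b)$ at $f\equiv 1\in H(\overline\D)$ and invoke duality (Proposition \ref{prop:dualB}). Since $0<s<1$ we have $k_s=1$, hence
\[
\|1\|_{B^p_s(\mu_t)}^p=\int_\D (1-|z|^2)^{(1-s)p}\,d\mu_t(z),
\]
which is finite: for $t>0$ it is bounded by $\mu_t(\D)$ because $\theta\in L^1(d\nu_t)$, while for $t=0$ (where $d\mu_0=d\nu/(1-|z|^2)$) the exponent $(1-s)p-1>-1$ is integrable against $d\nu$. Plugging $f\equiv 1$ into the definition of $\Gamma_3(b)$ then gives
\[
|\langle g,b\rangle_t|\lesssim \Gamma_3(b)\,\|g\|_{B^{p'}_{-s}(\mu'_t)}\qquad\text{for every }g\in H(\overline\D),
\]
so the linear form $\Lambda(g):=\langle g,b\rangle_t$ is continuous on $H(\overline\D)$ with the norm of $B^{p'}_{-s}(\mu'_t)$.

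By Hahn-Banach, $\Lambda$ extends to a bounded linear functional on $B^{p'}_{-s}(\mu'_t)$. Applying Proposition \ref{prop:dualB} with the roles of $(p,s,\theta)$ and $(p',-s,\theta^{-p'/p})$ interchanged -- legitimate since $\theta\in\B_{p,t}$ if and only if $\theta^{-p'/p}\in\B_{p',t}$ -- the dual of $B^{p'}_{-s}(\mu'_t)$ with respect to $\langle\cdot,\cdot\rangle_t$ is $B^p_s(\mu_t)$, so there exists $\tilde b\in B^p_s(\mu_t)$ with $\langle g,\tilde b\rangle_t=\langle g,b\rangle_t$ for every $g\in H(\overline\D)$. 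To identify $\tilde b$ with $b$, test this identity against the monomials $g(z)=z^n$: using orthogonality of $\{z^n\}$ in $L^2(d\nu_t)$ (respectively $L^2(d\sigma)$ when $t=0$), each pairing reduces to a strictly positive multiple of the $n$-th Taylor coefficient, forcing $b=\tilde b\in B^p_s(\mu_t)$. I do not anticipate any serious obstacle; the only substantive inputs are Lemma \ref{lem:BinG3}, the symmetric form of the duality proposition, and the elementary verification that $1\in B^p_s(\mu_t)$ when $0<s<1$.
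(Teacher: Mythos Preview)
Your proposal is correct and follows essentially the same approach as the paper: Lemma \ref{lem:BinG3} with $s_0=s$, $s_1=-s$ for the Bloch inclusion, and testing at $f\equiv 1$ combined with the duality of Proposition \ref{prop:dualB} for the Besov inclusion. The paper's proof is terser, leaving the verification that $1\in B^p_s(\mu_t)$ and the identification $\tilde b=b$ implicit, but your added details are accurate and fill in exactly what is needed.
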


\begin{proof}
The above lemma gives $b\in B^\infty_0$. 
The fact that $b\in B^p_s(\mu_t)$ follows from the estimate 
$
|\langle g,b\rangle_t|\le C_b \|1\|_{B^p_s(\mu_t)}\|g\|_{B^{p'}_{-s}(\mu^\prime_t)}
$
and the duality result in Proposition \ref{prop:dualB}.
\end{proof}

\begin{lem} \label{lem:G3inB}
If $1<p<\infty$ and $s_0,s_1<0$, then $\Gamma_3(b)\lesssim \|b\|_{B^\infty_{-s_0-s_1}}$.
\end{lem}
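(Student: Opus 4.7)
The plan is to estimate $|\langle fg,b\rangle_t|$ by transferring all derivatives onto $b$ via Lemma \ref{lem:ptspar}, using the pointwise Bloch-type bound implicit in $\|b\|_{B^\infty_{-s_0-s_1}}$, and finally applying H\"older's inequality with the B\'ekoll\'e weight factored as $\theta^{1/p}\theta^{-1/p}$. The key structural feature of the regime $s_0,s_1<0$ is that both Besov norms collapse to weighted $L^p$ norms (one may take $k_{s_0}=k_{s_1}=0$ in the definition), so no derivatives need to fall on $f$ or $g$.

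Choose a positive integer $k>-s_0-s_1$, so that $k+s_0+s_1>0$. By Lemma \ref{lem:ptspar}\eqref{item:ptspar1},
\[
\langle fg,b\rangle_t \,=\, \langle fg,\,R^k_{1+t}b\rangle_{t+k} \,\approx\, \int_\D f(z)g(z)\,\overline{R^k_{1+t}b(z)}\,(1-|z|^2)^{t+k-1}\,d\nu(z),
\]
and the hypothesis $b\in B^\infty_{-s_0-s_1}$ gives, via exactly the derivative-based characterization already used in the proof of Lemma \ref{lem:BinG3}, the pointwise bound $|R^k_{1+t}b(z)|\lesssim \|b\|_{B^\infty_{-s_0-s_1}}(1-|z|^2)^{-(k+s_0+s_1)}$. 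Plugging this in, the powers of $(1-|z|^2)$ combine to $(1-|z|^2)^{t-1-s_0-s_1}$, independent of $k$, producing
\[
|\langle fg,b\rangle_t|\,\lesssim\, \|b\|_{B^\infty_{-s_0-s_1}}\int_\D |f(z)||g(z)|(1-|z|^2)^{t-1-s_0-s_1}\,d\nu(z).
\]

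For $t>0$ I would absorb $(1-|z|^2)^{t-1}d\nu\approx d\nu_t$ and apply H\"older with the splitting $(1-|z|^2)^{-s_0-s_1}\theta^{1/p}\theta^{-1/p}=\bigl[(1-|z|^2)^{-s_0}\theta^{1/p}\bigr]\bigl[(1-|z|^2)^{-s_1}\theta^{-1/p}\bigr]$, obtaining a bound by $\bigl(\int|f|^p(1-|z|^2)^{-s_0p}d\mu_t\bigr)^{1/p}\bigl(\int|g|^{p'}(1-|z|^2)^{-s_1 p'}d\mu'_t\bigr)^{1/p'}$, which, since $s_0,s_1<0$, is comparable to $\|f\|_{B^p_{s_0}(\mu_t)}\|g\|_{B^{p'}_{s_1}(\mu'_t)}$ by the identity $B^p_s(\mu_t)=H\cap L^p(\mu_{t-sp})$ recorded in Section \ref{sec:intro}. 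The case $t=0$ is handled identically with $\theta\equiv 1$ (since $\B_{p,0}=\{1\}$), splitting the weight as $(1-|z|^2)^{-1-s_0-s_1}=(1-|z|^2)^{-s_0-1/p}(1-|z|^2)^{-s_1-1/p'}$ before H\"older to recover the unweighted Besov norms. Taking the supremum over nonzero $f,g\in H(\overline\D)$ then yields the claim. The argument is essentially a direct H\"older computation; no serious obstacle is anticipated beyond bookkeeping of constants absorbed into $\lesssim$ and the uniform treatment of the two cases $t>0$ and $t=0$.
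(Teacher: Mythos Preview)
Your argument is correct and follows essentially the same approach as the paper's proof: transfer to $\langle fg,R^k_{1+t}b\rangle_{t+k}$ via Lemma \ref{lem:ptspar}\eqref{item:ptspar1}, insert the pointwise bound from $\|b\|_{B^\infty_{-s_0-s_1}}$, then apply H\"older with the splitting $\theta^{1/p}\theta^{-1/p}$ to recover the two Besov norms (which for $s_0,s_1<0$ are just weighted $L^p$ norms). The only cosmetic difference is that the paper writes the estimate uniformly in $t\ge 0$ (the formulas $\|f\|_{L^p(\theta\,d\nu_{t-s_0p})}$, $\|g\|_{L^{p'}(\theta^{-p'/p}d\nu_{t-s_1p'})}$ make sense also at $t=0$ with $\theta\equiv 1$), whereas you treat $t=0$ separately; both are fine.
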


\begin{proof}
Let $k$ be a positive integer such that $k>-s_0-s_1$. Then
\begin{align*}
|\langle fg,b\rangle_t|&=|\langle fg,R^k_{1+t}b\rangle_{t+k}|
\lesssim \|b\|_{B^\infty_{-s_0-s_1}}\|fg\|_{L^1(d\nu_{t-s_0-s_1})}\\
&\le \|b\|_{B^\infty_{-s_0-s_1}}\|f\|_{L^p(\theta d\nu_{t-s_0p})}\|g\|_{L^{p'}(\theta^{-p'/p} d\nu_{t-s_1p'})}\\
&\approx \|b\|_{B^\infty_{-s_0-s_1}}\|f\|_{B^p_{s_0}(\mu_t)}\,\|g\|_{B^{p'}_{s_1}(\mu^\prime_t)}, 
\end{align*} 
which ends the proof.
\end{proof}

\begin{proof}[Proof of Theorem \ref{thm:predualBinfs}]
The proof is an immediate consequence of Lemmas \ref{lem:BinG3} and \ref{lem:G3inB}.
\end{proof}

\begin{thm}\label{thm:normCB}
Let $1<p<\infty$, $s<1$, $t\ge 0$ and $\theta\in \B_{p,t}$. Then, $CB^p_s(\mu_t)\subset B^p_s(\mu_t)\cap B^\infty_0$. 
If $s<0$, then $CB^p_s(\mu_t)=B^\infty_0$.
\end{thm}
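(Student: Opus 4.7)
The containment $CB^p_s(\mu_t)\subset B^p_s(\mu_t)$ is immediate: testing the defining supremum with the constant function $f\equiv 1\in B^p_s(\mu_t)$ (admissible since $\mu_t(\D)<\infty$) forces $(1+R)^{k_s}b\in B^p_{s-k_s}(\mu_t)$, and hence $b\in B^p_s(\mu_t)$ by Proposition \ref{prop:Bpsk}.

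For the Bloch inclusion $CB^p_s(\mu_t)\subset B^\infty_0$ when $s<1$, the plan is to apply the defining inequality of $CB^p_s(\mu_t)$ with $k=1$ (permissible for every $s<1$ by Proposition \ref{prop:Bpsk}), namely $\|f(1+R)b\|_{B^p_{s-1}(\mu_t)}\le C\|b\|_{CB^p_s(\mu_t)}\|f\|_{B^p_s(\mu_t)}$, to the model function $f_z(w)=(1-\overline zw)^{-\alpha}$ for $z\in\D$ and $\alpha$ large (the family used in Lemma \ref{lem:fznorms}). I will combine two B\'ekoll\'e-type ingredients: (a) a direct computation together with Corollary \ref{cor:BP} gives $\|f_z\|_{B^p_s(\mu_t)}^p\approx (1-|z|^2)^{-(\alpha+s)p}\mu_t(T_z)$; and (b) since $s-1<0$, $B^p_{s-1}(\mu_t)=H\cap L^p(\mu_{t-(s-1)p})$, and the standard sub-mean-value argument on a pseudohyperbolic disc combined with the doubling property in Proposition \ref{prop:BP}(i) yields the point-evaluation bound $|h(z)|^p\lesssim (1-|z|^2)^{-(s-1)p}\mu_t(T_z)^{-1}\|h\|_{B^p_{s-1}(\mu_t)}^p$ for every $h\in B^p_{s-1}(\mu_t)$. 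Substituting $h=f_z(1+R)b$ and using $|h(z)|=(1-|z|^2)^{-\alpha}|(1+R)b(z)|$, the factors $\mu_t(T_z)$ cancel and the powers of $(1-|z|^2)$ collapse to exactly $(1-|z|^2)|(1+R)b(z)|\lesssim\|b\|_{CB^p_s(\mu_t)}$, i.e.\ $b\in B^\infty_0$.

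For the reverse inclusion $B^\infty_0\subset CB^p_s(\mu_t)$ in the case $s<0$, the argument is direct. Given $b\in B^\infty_0$ and $f\in B^p_s(\mu_t)$, since $s-1<0$ Proposition \ref{prop:Bpsk} lets me compute the $B^p_{s-1}(\mu_t)$-norm with $k=0$:
\[
\|f(1+R)b\|_{B^p_{s-1}(\mu_t)}^p=\int_\D|f(w)|^p|(1+R)b(w)|^p(1-|w|^2)^{(1-s)p}\,d\mu_t(w).
\]
The pointwise Bloch estimate $|(1+R)b(w)|(1-|w|^2)\lesssim\|b\|_{B^\infty_0}$ then reduces this to $\|b\|_{B^\infty_0}^p\int|f|^p(1-|w|^2)^{-sp}\,d\mu_t\approx\|b\|_{B^\infty_0}^p\|f\|_{B^p_s(\mu_t)}^p$, delivering the multiplier bound and hence $b\in CB^p_s(\mu_t)$ with $\|b\|_{CB^p_s(\mu_t)}\lesssim\|b\|_{B^\infty_0}$.

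I expect the main obstacle to lie in the second step: verifying that the B\'ekoll\'e weight factors---the tent masses $\mu_t(T_z)$ together with the several powers of $(1-|z|^2)$---collapse to exactly the Bloch factor $(1-|z|^2)^{-1}$. This cancellation hinges on the homogeneity identity $\mu_{t-(s-1)p}(T_z)\approx (1-|z|^2)^{-(s-1)p}\mu_t(T_z)$ (which is valid because $(1-|w|^2)\approx(1-|z|^2)$ on $T_z$), combined with the sharp B\'ekoll\'e control of $\|f_z\|_{B^p_s(\mu_t)}$ provided by Corollary \ref{cor:BP}; without these two inputs the final estimate would carry spurious weight-dependent factors.
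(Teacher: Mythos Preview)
Your argument for $CB^p_s(\mu_t)\subset B^p_s(\mu_t)$ and for the reverse inclusion when $s<0$ is correct and coincides with the paper's. The problem lies in step (b) of the Bloch inclusion.

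First, the exponent in your point-evaluation bound is off by a sign: the correct estimate is
\[
|h(z)|^p\ \lesssim\ (1-|z|^2)^{(s-1)p}\,\mu_t(T_z)^{-1}\,\|h\|_{B^p_{s-1}(\mu_t)}^p,
\]
i.e.\ a \emph{negative} power of $(1-|z|^2)$, not $(1-|z|^2)^{-(s-1)p}$. (Test your stated bound with $\theta\equiv 1$: it would give $|h(z)|^p\lesssim (1-|z|^2)^{(1-s)p-1-t}\|h\|^p$, which for large $p$ is bounded in $z$ and is plainly false on a Bergman space.) With the corrected exponent, your cancellation does collapse to $(1-|z|^2)|(1+R)b(z)|\lesssim\|b\|_{CB^p_s(\mu_t)}$; with the exponent as written, it collapses to $(1-|z|^2)^{2s-1}|(1+R)b(z)|\lesssim\|b\|_{CB^p_s(\mu_t)}$.

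Second, and more substantively, ``sub-mean-value on a pseudohyperbolic disc combined with doubling'' does not yield the weighted point-evaluation. Sub-mean-value produces $|h(z)|^p\lesssim(1-|z|^2)^{-2}\int_{D(z,r)}|h|^p\,d\nu$, and to pass to $\int|h|^p\theta\,d\nu_{t+(1-s)p}$ one must remove the weight $\theta^{-1}$ on $D(z,r)$; doubling of $\mu_t$ on tents gives no such local lower bound on $\theta$. What is actually needed is the Cauchy reproducing formula $h(z)=\PN^N(h)(z)$, H\"older's inequality splitting off a factor $\theta^{-p'/p}$, and then the $\B_{p,t}$ estimate of Proposition~\ref{prop:BP}\eqref{item:BP2} (equivalently Corollary~\ref{cor:BP}), exactly as you invoke for (a). This is precisely the paper's route: it writes $R^k_{1+t+(1-s)p}(1+R)b(z)$ as a $\PN$-integral, applies H\"older so that the $CB^p_s$ condition controls the $\theta$-factor (with implicit test function $(1-\overline z\,\cdot)^{-(1+t+(1-s)p+k)/p}$), and closes with Corollary~\ref{cor:BP}. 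Once (b) is repaired along these lines your proof is essentially the same as the paper's, just packaged as ``norm of $f_z$'' $\times$ ``point evaluation'' rather than a single H\"older split.
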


\begin{proof}
The first inclusion  follows from the same arguments used to prove Lemma \ref{lem:BinG3}. 
For a non-negative integer $k>s$ which we precise later, we have 
\begin{align*}
&|R_{1+t+(1-s)p}^{k}(I+R) b(z)|=|R_{1+t+(1-s)p}^{k}\PN^{t+(1-s)p}((I+R)b)(z)|\\
&=|\PN^{t+(1-s)p,t+(1-s)p+k}((I+R) b)(z)|\\
&\le \left(\int_\D \frac{(1-|w|^2)^{(1-s)p+t-1}|(I+R)b(w)|^p}
{|1-w\overline z|^{1+t+(1-s)p+k}}\theta(w)d\nu(w)\right)^{1/p}\,\\
&\qquad\qquad \cdot\left( \int_\D\frac{(1-|w|^2)^{(1-s)p+t-1}}{|1-w\overline z|^{1+t+(1-s)p+k}}\theta^{-p'/p}(w)d\nu(w)\right)^{1/p'}\\
&\lesssim \|b\|_{CB^p_s(\mu_t)}\left(\int_\D \frac{(1-|w|^2)^{(1-s)p+t-1}}
{|1-w\overline z|^{1+t+(1-s)p+k+p}}\theta(w)d\nu(w)\right)^{1/p}\,\\
&\qquad\qquad \cdot\left( \int_\D\frac{(1-|w|^2)^{(1-s)p+t-1}}{|1-w\overline z|^{1+t+(1-s)p+k}}\theta^{-p'/p}(w)d\nu(w)\right)^{1/p'}\\
&\le \|b\|_{CB^p_s(\mu_t)}
(1-|z|^2)^{-1}\left(\PP^{t+(1-s)p,t+(1-s)p+k}(\theta)(z)\right)^{1/p}\,\\
&\qquad\qquad\cdot\left(\PP^{t+(1-s)p,t+(1-s)p+k}(\theta^{-p'/p})(z)\right)^{1/p'}.
\end{align*}
If $k>(1+t+(1-s)p)(\max\{p,p'\}-1)$, then Corollary \ref{cor:BP}  with $N=(1-s)p$ and $M=k$, gives 
$
|R_{1+t+(1-s)p}^{k}(I+R) b(z)|\lesssim \|b\|_{CB^p_s(\mu_t)}(1-|z|^2)^{-1-k} 
$
 which proves that $b\in B^\infty_0$.

Next, if $s<0$, then we have $k_s=1$ and the inequality $  \|b\|_{CB^p_s(\mu_t)}\lesssim \|b\|_{B^\infty_0}$
follows from 
$$
\int_\D |f(z)|^p\,|(1+R) b(z)|^p\,(1-|z|^2)^{(1-s)p}\,d\mu_t(z)\lesssim \|b\|_{B^\infty_0}^p\|f\|_{B^p_s(\mu_t)}^p,
$$
which concludes the proof.
\end{proof}

\begin{rem}\label{rem:CBindepk}
Observe that if $0<s<1$, $0<\e<1-s$ and $\|g\|_{B^\infty_{s+\e-1}}<\infty$, then 
$$
\|g f\|_{B^p_{s-1}(\mu_t)}\lesssim \|g\|_{B^\infty_{s+\e-1}} \|f\|_{B^{p}_{-\e}(\mu_t)}\lesssim \|g\|_{B^\infty_{s+\e-1}} \|f\|_{B^{p}_{s}(\mu_t)}.
$$
Therefore, $g\in Mult(B^{p}_{s}(\mu_t)\to B^{p}_{s-1}(\mu_t))$. 
In particular, 
$$
B^\infty_0\subset B^\infty_{s+\e-1}\subset Mult(B^{p}_{s}(\mu_t)\to B^{p}_{s-1}(\mu_t)).
$$

This gives  that 
$g\in CB^{p}_{s}(\mu_t)$ if and only if for some (any) $l>0$, $(l+R)g\in Mult(B^{p}_{s}(\mu_t)\to B^{p}_{s-1}(\mu_t))$. 
\end{rem}

\section{Proof of Theorem \ref{thm:BF} and Corollary \ref{cor:BFG}} \label{sec:proofBF}

\subsection{Proof of \eqref{item:BF1} $\Longrightarrow$ \eqref{item:BF3} $\Longrightarrow$ \eqref{item:BF2} in Theorem \ref{thm:BF}} \quad\par

The fact that \eqref{item:BF1} $\Longrightarrow$ \eqref{item:BF3} is a consequence of H\"older's inequality.
Indeed, since $0<s<1$, we have 
\begin{align*}
\langle \langle|fg|,|(1+R)b|\rangle\rangle_{t+1}
&\le \|g\|_{L^{p'}(\theta^{-p'/p}d\nu_{sp'+t})}\|f(1+R)b\|_{L^p(\theta d\nu_{(1-s)p+t})}\\
&\le \|g\|_{B^{p'}_{-s}(\mu^\prime_t)}\,\|f\|_{B^{p}_{s}(\mu_t)}\,\|b\|_{CB^{p}_{s}(\mu_t)}.
\end{align*}

Clearly \eqref{item:BF3} $\Longrightarrow$ \eqref{item:BF2} is a consequence of Lemma \ref{lem:ptspar} (i). 
Indeed, if $|\langle\langle |fg|,|(1+R)b|\rangle\rangle_{t+1}|<\infty$ for any $f,g\in  H(\overline\D)$, 
then  by Corollary \ref{cor:Bloch} (see also Remark \ref{rem:CBindepk}) we have
$|\langle\langle |fg|,|R^1_{t+1}b|\rangle\rangle_{t+1}|<\infty$.
Thus 
$$
|\langle fg,b\rangle_t|=|\langle fg,R^1_{t+1}b \rangle_{t+1}|\le |\langle\langle |fg|,|R^1_{t+1}b|\rangle\rangle_{t+1}|.
$$
which concludes the proof.

Observe that if $b\in CB^p_s(\mu_t)$, the above estimates give 
\begin{equation}\label{eqn:2implies3}
|\langle fg,b\rangle_t|\le \|b\|_{CB^{p}_{s}(\mu_t)} \|f\|_{B^{p}_{s}(\mu_t)} \|g\|_{B^{p'}_{-s}(\mu^\prime_t)}.
\end{equation}
Thus we have
$
\Gamma_2(b)\le \Gamma_1(b) \le \|b\|_{CB^{p}_{s}(\mu_t)}.
$

\subsection{Proof of \eqref{item:BF2} $\Longrightarrow$ \eqref{item:BF1}  in Theorem \ref{thm:BF} for the unweighted case $t=0$}
\quad\par
In the next proposition we use Corollary \ref{cor:Bloch} and the weighted Cauchy-Pompeiu's 
 formula, to give an easy proof of \eqref{item:BF2} $\Longrightarrow$ \eqref{item:BF1} 
 in Theorem \ref{thm:BF} for the unweighted case $t=0$. This last  
case  has been proved using different methods in \cite{Ro-Wu} for $p=2$ and in 
\cite{Bla-Pau} for any $p>1$. Our approach follows the techniques in \cite{Wu}.

\begin{prop} 
Let $1<p<\infty$ and  $0<s<1$. Assume that $b\in H$ satisfies 
$
|\langle fg,b\rangle_0|\le C_b \|f\|_{B^p_s}\|g\|_{B^{p'}_{-s}}$ for any $f,g\in H(\overline\D)$.  
Then $b\in CB^p_s$.
\end{prop}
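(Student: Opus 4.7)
First I would reduce the multiplier condition to a bilinear inequality via duality, then attack it with the Cauchy-Pompeiu formula. By Corollary \ref{cor:Bloch} the hypothesis $\Gamma_2(b)<\infty$ already gives $b\in B^p_s\cap B^\infty_0$ with $\|b\|_{B^p_s}\lesssim \Gamma_2(b)$, so $b$ is Bloch and in $L^1(d\nu_{N-1})$ for every $N\ge 1$, hence the reproducing formula $b=\mathcal P^N(b)$ holds. By Remark \ref{rem:CBindepk}, showing $b\in CB^p_s$ is equivalent to showing $(1+R)b\in\mathrm{Mult}(B^p_s\to B^p_{s-1})$; by Proposition \ref{prop:dualB} with $t=0$ (which identifies $(B^p_{s-1})'\equiv B^{p'}_{1-s}$ via $\langle\cdot,\cdot\rangle_0$), this reduces to
$$|\langle f(1+R)b,h\rangle_0|\lesssim \|f\|_{B^p_s}\|h\|_{B^{p'}_{1-s}},\qquad f,h\in H(\overline{\D}).$$

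Next, Lemma \ref{lem:ptspar}(i) rewrites the pairing as the disc integral
$\int_\D f(z)(1+R)b(z)\overline{(1+R)h(z)}\,d\nu(z)$.
I would then apply the weighted Cauchy-Pompeiu formula (Theorem \ref{thm:CP}) to the non-holomorphic function $\bar z\,\overline{h(z)}$. Since its Taylor-Fourier expansion has only anti-holomorphic modes, the holomorphic projection $\mathcal P^N(\bar z\bar h)$ vanishes identically, and $\bar\partial(\bar z\bar h)=\overline{(1+R)h}$, so
$$\bar z\,\overline{h(z)}=\int_\D\overline{(1+R)h(w)}\,\frac{(1-|w|^2)^N}{(1-z\bar w)^N(w-z)}\,d\nu(w),\qquad N\ge 1.$$
Combining this identity (read as an expression for $\overline{(1+R)h}$ via $\bar h$) with $(1+R)b=(N+1)\mathcal P^{N,N+1}(b)-Nb$ and then applying Fubini, the disc integral decomposes as a sum of pairings of the form $\langle F_j G_j,b\rangle_0$ for explicit holomorphic $F_j,G_j$ built from $f,h$ by modulation with powers $1/(1-z\bar w)^\alpha$ of the Cauchy kernel, together with a lower-order term of the form $c\,f(0)\,b(0)\,\overline{h(0)}$.

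Finally, the hypothesis $\Gamma_2(b)<\infty$ bounds each main term by $\|F_j\|_{B^p_s}\|G_j\|_{B^{p'}_{-s}}$, and these norms are controlled, via Lemma \ref{lem:estP} (applied to the fractional kernels arising from the integration-by-parts and the factor $1/(w-z)$ in $\mathcal K^N$) and Proposition \ref{prop:Bpsk}, by $\|f\|_{B^p_s}\|h\|_{B^{p'}_{1-s}}$; the lower-order boundary term is dominated by $|b(0)|\,\|f\|_{B^p_s}\|h\|_{B^{p'}_{1-s}}\lesssim \Gamma_2(b)\|f\|_{B^p_s}\|h\|_{B^{p'}_{1-s}}$ using $\|b\|_{B^p_s}\lesssim \Gamma_2(b)$ from Step 1.

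The hard part will be the Fubini/rearrangement: correctly exhibiting the holomorphic factors $F_j,G_j$ so that the product $\|F_j\|_{B^p_s}\|G_j\|_{B^{p'}_{-s}}$ distributes as $\|f\|_{B^p_s}\|h\|_{B^{p'}_{1-s}}$ (with the index $1-s$ on $h$ absorbing the derivative $(1+R)h$ that appears via Cauchy-Pompeiu). This requires choosing $N$ large enough so that Lemma \ref{lem:estP} yields integrable kernels with the right homogeneity, and so that $\mathcal P^{N,N+1}$ acts between the relevant weighted Besov spaces; once this bookkeeping is in place, the estimate follows immediately from the hypothesis applied to each $\langle F_jG_j,b\rangle_0$.
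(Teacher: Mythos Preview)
Your reduction to the bilinear estimate $|\langle f(1+R)b,h\rangle_0|\lesssim\|f\|_{B^p_s}\|h\|_{B^{p'}_{1-s}}$ via Corollary~\ref{cor:Bloch}, Remark~\ref{rem:CBindepk} and Proposition~\ref{prop:dualB} is fine, and rewriting this pairing as $\int_\D f(z)(1+R)b(z)\overline{(1+R)h(z)}\,d\nu(z)$ via Lemma~\ref{lem:ptspar} is also correct. The gap is in what follows.

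The Cauchy--Pompeiu identity you write for $\bar z\,\overline{h(z)}$ expresses $\bar z\,\overline{h(z)}$ as an integral of $\overline{(1+R)h(w)}$; it cannot be ``read'' in the opposite direction to isolate $\overline{(1+R)h(z)}$. More seriously, the claimed decomposition into pairings $\langle F_jG_j,b\rangle_0$ does not emerge from the manipulations you describe. If you substitute $(1+R)b=(N+1)\PN^{N,N+1}(b)-Nb$ into the disc integral and apply Fubini, the main term becomes
\[
\int_\D b(w)\,(1-|w|^2)^{N-1}\Bigl[\int_\D \frac{f(z)\,\overline{(1+R)h(z)}}{(1-z\bar w)^{2+N}}\,d\nu(z)\Bigr]d\nu(w),
\]
and the bracket is \emph{anti}-holomorphic in $w$, not holomorphic; so this is not of the form $\int F G\,\bar b$ and the hypothesis $|\langle FG,b\rangle_0|\le C_b\|F\|\|G\|$ cannot be applied. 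The residual term $-N\langle fb,h\rangle_0$ likewise has $b$ in the first slot, not the second. No choice of $N$ repairs this: the obstruction is that the reproducing formula puts $b(w)$, not $\overline{b(w)}$, inside the integral.

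The paper circumvents this by applying Cauchy--Pompeiu to the function $(1+R)b\cdot\bar f$ rather than to a function of $h$. Then the holomorphic piece $h_0:=\PN^{1}\bigl((1+R)b\,\bar f\bigr)$, tested against an arbitrary $g\in H(\overline\D)$, gives by Fubini and the reproducing property
\[
\langle h_0,g\rangle_1=\langle (1+R)b,fg\rangle_1=\langle b,fg\rangle_0,
\]
so the hypothesis applies directly and duality bounds $\|h_0\|_{L^p(d\nu_{(1-s)p})}$. The remaining piece $\K^{1}\bigl((1+R)b\,\overline{\p f}\bigr)$ needs only the Bloch bound on $b$ and the kernel estimate of Lemma~\ref{lem:estP}. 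In short: apply Cauchy--Pompeiu to the object containing $b$ and $\bar f$, not to $\bar z\bar h$, so that the holomorphic projection lands exactly on the bilinear form in the hypothesis.
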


\begin{proof}
By Lemma \ref{lem:embed} we have $b\in B^{p}_{s}\subset B^1_0$. 
Therefore, for $f\in H(\overline\D)$, the  weighted Cauchy-Pompeiu's representation formula in Theorem \ref{thm:CP}  gives 
\begin{equation}\label{eqn:cp}
(1+R)b(z)\overline {f}=\PN^{1}((1+R)b\overline {f})+\K^{1}((1+R)b\overline {\p f}).
\end{equation}
In order to prove this proposition it is enough to show that the $L^p(d\nu_{(1-s)p})$-norms of
the two terms in the right hand side in \eqref{eqn:cp} are bounded by  a constant times $\|f\|_{B^p_s}$.

The first term  $h=\PN^{1}((1+R) b \overline{f})$ is a holomorphic function  on $\D$. 
Thus, by Corollary \ref{cor:dualB}, it suffices to prove that  
$
|\langle h, g\rangle_{1}|\le C \|f\|_{B^p_s}\|g\|_{B^{p'}_{-s}}
$
for any $g\in H(\overline\D)$.

By Lemma \ref{lem:ptspar}, this follows from $\langle h, g\rangle_{1}=\langle (1+R)b, fg\rangle_1=\langle b, fg\rangle_0$ 
and the hypotheses.

In  order to  estimate  the $L^p( d\nu_{(1-s)p})$-norm of 
$\K^{1}((1+R)b\overline {\p f})$, note that by Corollary \ref{cor:Bloch} we have $b\in B^\infty_0$. This fact, 
H\"older's inequality and the estimates of Lemma \ref{lem:estP}, 
with $\e>0$ small enough to be chosen later on, we have
\begin{align*}
&|\K^{1}((1+R)b\overline {\p f})(z)|^p
\le \|b\|_{B^\infty_0}^p \left(\int_{\D}\frac{|\p f(w)|}{|1-z\overline w||w-z|}d\nu(w)\right)^p\\
&\le \|b\|_{B^\infty_0}^p \int_{\D}\frac{|\p f(w)|^p(1-|w|^2)^{(1-\e)p-1}}
{|1-z\overline w|^{(1-2\e)p}|w-z|}d\nu(w)
\left(\int_{\D}\frac{(1-|w|^2)^{\e p'-1}}
{|1-z\overline w|^{2\e p'}|w-z|}d\nu(w) \right)^{p/p'}\\
&\lesssim \|b\|_{B^\infty_0}\int_{\D}\frac{|\p f(w)|^p(1-|w|^2)^{(1-\e)p-1}}
{|1-z\overline w|^{(1-2\e)p}|w-z|}d\nu(w) (1-|z|^2)^{-\e p}.
\end{align*}

Therefore, if $0<\e<\min\{s,1-s\}$, then the above estimate, Fubini's theorem and Lemma \ref{lem:estP} give
$$
\|\K^{1}((1+R)b\overline {\p f})\|_{L^p(d\nu_{(1-s)p})}\lesssim \|b\|_{B^\infty_0} 
\|\p f\|_{L^p( d\nu_{(1-s)p})}\lesssim \|b\|_{B^\infty_0}\|f\|_{B^p_s},
$$
which ends the proof.
\end{proof}

\subsection{Proof of \eqref{item:BF2} $\Longrightarrow$ \eqref{item:BF1}  in Theorem \ref{thm:BF} for the general case}\quad\par

Observe that if we   use the same arguments of the above section to prove the unweighted case, 
then in the estimate of $\K^{t+1}((1+R)b\,\overline{\p f})$ we will end up with integrals of the type 
$$
\int_\D \frac{(1-|w|^2)^{N-1}}{|1-z\overline w|^{1+M}|w-z|}\,\theta(w) d\nu(w),
$$
which are difficult to estimate because we do not have precise information on $\theta$ near  the diagonal $z=w$.
One method to avoid this difficulty is based in the use of the following modification of the Cauchy-Pompeiu's formula, 
which on one hand avoid the singularity on the diagonal and in other hand increases the power of $(1-|w|^2)$.

\begin{lem}\label{lem:KN}  
Let $t> 0$, $b\in B^\infty_0$ and $f\in H(\overline\D)$. 
For any integer $m\ge 2$, we have 
\begin{align*}
\K^{t+1}((1+R)b\overline {\p f})
&=\K^{t+m}_0((1+R)b\,\overline {\p^2 f})
+\K^{t+m-1}_1((1+R)b\,\overline {\p f})\\
&\quad +\sum_{j=1}^{m-1} Q^{t+j}((1+R)b\overline {R f}),
\end{align*}
where  
\begin{align*}
\K^{t+m}_0((1+R)b\overline {\p^2 f})(z)
&:=-\int_\D \frac{((1+R)b\overline{\p^2 f})(w)}{(1-z\overline w)^{t+m}}\frac{\overline{w-z}}{w-z}d\nu_{t+m+1}(w),\\
\K^{t+m-1}_1((1+R)b\overline {\p f})(z)
&:=(t+m)\int_\D \frac{((1+R)b\overline{\p f})(w)\,(\overline{w-z})}{(1-z\overline w)^{t+m+1}}d\nu_{t+m}(w),\\
Q^{t+j}((1+R)b\overline {R f})(z)&:=
\int_\D ((1+R)b \overline{R f})(w)\frac{d\nu_{t+j+1}(w)}{(1-z\overline w)^{t+j+1}}.
\end{align*} 
\end{lem}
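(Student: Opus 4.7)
The proof rests on two algebraic identities for the Cauchy-type kernel $\K^{t+j}(w,z)=\frac{(1-|w|^2)^{t+j}}{(1-z\overline w)^{t+j}(w-z)}$, combined with one application of Stokes' theorem. The program is to first raise the power of $(1-|w|^2)$ in the kernel (producing the $Q^{t+j}$ terms) and then integrate by parts one time (producing the $\K^{t+m}_0$ and $\K^{t+m-1}_1$ terms).

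\textbf{Step 1 (raising the exponent).} The identity $1-|w|^2=(1-z\overline w)-\overline w(w-z)$, divided by $(1-z\overline w)(w-z)$, gives
\[
\K^{t+j}(w,z)=\K^{t+j+1}(w,z)+\frac{\overline w\,(1-|w|^2)^{t+j}}{(1-z\overline w)^{t+j+1}}.
\]
Integrated against $(1+R)b(w)\,\overline{\p f(w)}\,d\nu(w)$ and using the key arithmetic identity $\overline w\,\overline{\p f(w)}=\overline{Rf(w)}$, the second term on the right is (up to the normalization coming from $d\nu_{t+j+1}$) precisely $Q^{t+j}((1+R)b\overline{Rf})(z)$. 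Iterating this telescoping identity for $j=1,\dots,m-1$ rewrites $\K^{t+1}((1+R)b\overline{\p f})$ as $\K^{t+m}((1+R)b\overline{\p f})$ plus the sum $\sum_{j=1}^{m-1}Q^{t+j}((1+R)b\overline{Rf})$.

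\textbf{Step 2 (one Stokes integration).} Introduce the bounded auxiliary function
\[
G_m(w,z):=\frac{(1-|w|^2)^{t+m}\,\overline{(w-z)}}{(1-z\overline w)^{t+m}(w-z)},
\]
which vanishes on $\bD$ since $(1-|w|^2)^{t+m}=0$ there. A direct computation, using $\overline\p_w\overline{(w-z)}=1$ and $\overline\p_w(w-z)^{-1}=0$ for $w\neq z$, yields the pointwise identity on $\D\setminus\{z\}$
\[
\overline\p_w G_m(w,z)=\K^{t+m}(w,z)-\frac{(t+m)(1-|w|^2)^{t+m-1}\,\overline{(w-z)}}{(1-z\overline w)^{t+m+1}}.
\]
Multiplying by $(1+R)b(w)\overline{\p f(w)}$ and integrating, the second summand is already $\K^{t+m-1}_1((1+R)b\overline{\p f})(z)$ modulo the normalizations from $d\nu_{t+m}$. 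For the first summand I apply Stokes: since $(1+R)b$ is holomorphic, $\overline\p_w((1+R)b\overline{\p f})=(1+R)b\overline{\p^2 f}$, so
\[
\int_\D\overline\p_w G_m\cdot(1+R)b\overline{\p f}\,d\nu=-\int_\D G_m(1+R)b\overline{\p^2 f}\,d\nu,
\]
and this last integral is $\K^{t+m}_0((1+R)b\overline{\p^2 f})(z)$ up to normalization. Combining Step 1 and Step 2 yields the claimed decomposition.

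\textbf{Main obstacle.} The Stokes step is delicate because $G_m$ is bounded but \emph{discontinuous} at $w=z$ (the factor $\overline{(w-z)}/(w-z)$ has modulus one without a limit there) and $\overline\p_w G_m$ contains the non-absolutely-integrable kernel $1/(w-z)$. I would handle this by excising a ball $B_\e(z)$, applying Stokes on $\D\setminus B_\e(z)$ where $G_m$ is smooth, and letting $\e\to 0$. Parametrizing $\p B_\e(z)$ one sees that the boundary integral is $O(\e\cdot\sup|G_m(1+R)b\overline{\p f}|)$, which vanishes as $\e\to 0$ because $\overline{\p f}$ is bounded (from $f\in H(\overline\D)$) and $(1+R)b$ is bounded on compact subsets of $\D$. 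The assumption $b\in B^\infty_0$ guarantees in addition that every integral appearing in the statement is absolutely convergent (in $Q^{t+j}$ and $\K^{t+m-1}_1$ one gains an extra power of $(1-|w|^2)$, while in $\K^{t+m}_0$ the singular ratio $\overline{(w-z)}/(w-z)$ has modulus one and the remaining factors are integrable), which legitimizes Fubini-type manipulations used implicitly throughout.
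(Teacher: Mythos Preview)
Your proof is correct and follows essentially the same route as the paper: first the telescoping identity based on $1-|w|^2=(1-z\overline w)-\overline w(w-z)$ to pass from $\K^{t+1}$ to $\K^{t+m}$ while collecting the $Q^{t+j}$ terms, then the kernel identity $\K^{t+m}(w,z)=\overline\p_w G_m(w,z)+(t+m)(1-|w|^2)^{t+m-1}\overline{(w-z)}/(1-z\overline w)^{t+m+1}$ followed by Stokes on $\D\setminus B_\e(z)$ with $\e\to 0$. One small correction: the singularity $1/(w-z)$ \emph{is} absolutely integrable in two real dimensions, so the reason for excising $B_\e(z)$ is not integrability but rather that $G_m$ fails to be $C^1$ at $w=z$ (the pointwise formula for $\overline\p_w G_m$ is only valid off the diagonal); your excision argument handles this correctly regardless.
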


\begin{proof}
Recall that 
$$
\K^{t}(w,z)=\frac{(1-|w|^2)^{t}}{(1-z\overline w)^{t}}\frac{1}{w-z}.
$$
Since $1=\frac{1-|w|^2}{1-z\overline w}+\frac{\overline w(w-z)}{1-z\overline w}$, we have
$$
\K^{t+1}((1+R)b\overline {\p f})(z)=\K^{t+2}((1+R)b\overline {\p f})(z)+Q^{t+1}((1+R)b\overline {R f})(z).
$$

Iterating this formula, we obtain 
$$
\K^{t+1}((1+R)b\overline {\p f})(z)=\K^{t+m}((1+R)b\overline {\p f})(z)
+\sum_{j=1}^{m-1} Q^{t+j}((1+R)b\overline {R f})(z).
$$

An easy computation shows that
\begin{align*}
&\K^{t+m}(w,z)=\frac{(1-|w|^2)^{t+m}}{(1-z\overline w)^{t+m}}\frac{1}{w-z}\\
&=\overline{\p_w}\left(\frac{(1-|w|^2)^{t+m}}{(1-z\overline w)^{t+m}}\frac{\overline{w-z}}{w-z}\right)
+(t+m)\frac{(1-|w|^2)^{t+m-1}(\overline{w-z})}{(1-z\overline w)^{t+m+1}}.
\end{align*}

Fixed $z\in\D$ and $0<\e< 1-|z|$, let $\Omega_{z,\e}:=\D\setminus \{w\in\D:|w-z|<\e\}$. If we apply  Stokes' theorem to the region $\Omega_{z,\e}$ and let $\e\to 0$, we obtain 
\begin{align*}
\K^{t+m}&((1+R)b\overline{\p f})(z)\\
&=-\int_\D ((1+R)b\overline{\p^2 f})(w)\frac{(1-|w|^2)^{t+m}}{(1-z\overline w)^{t+m}}\frac{\overline{w-z}}{w-z}d\nu(w)\\
&\quad +(t+m)\int_\D ((1+R)b\overline{\p f})(w)\frac{(1-|w|^2)^{t+m-1}(\overline{w-z})}
{(1-z\overline w)^{t+m+1}}d\nu(w),
\end{align*}
which concludes the proof.
\end{proof}

\begin{prop} \label{prop:KN}
Let $1< p<\infty$, $0<s<1$, $t> 0$ , $b\in B^\infty_0$, $f\in H(\overline\D)$ and 
$$
\varphi_f(w):=|\p^2 f(w)|(1-|w|^2)^{2-s}+|\p f(w)|(1-|w|^2)^{1-s}.
$$

 Then we have 
\begin{equation}\label{eqn:KN1}
|\K^{t+1}((1+R)b\overline {\p f})(z)| 
\lesssim \|b\|_{B^\infty_0}\,\PP^{t+s,t+1}\left( \varphi_f \right)(z).
\end{equation}

Therefore, if $\theta\in \B_{p,t}$, then 
\begin{equation}\label{eqn:KN2}
\|(1-|z|^2)^{1-s}\K^{t+1}((1+R)b\overline {\p f})(z)\|_{L^p(\mu_t)}\lesssim \|b\|_{B^\infty_0}\,\|f\|_{B^p_s(\mu_t)}.
\end{equation}
\end{prop}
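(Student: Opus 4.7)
The plan is to derive the pointwise bound \eqref{eqn:KN1} directly from Lemma \ref{lem:KN} with the specific choice $m=2$, and then to deduce the $L^p$ estimate \eqref{eqn:KN2} by majorizing $(1-|z|^2)^{1-s}\PP^{t+s,t+1}$ pointwise by the B\'ekoll\'e projection $\PP^t$ and invoking Theorem \ref{thm:opBt}.

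For \eqref{eqn:KN1}, I would start from the identity of Lemma \ref{lem:KN} with $m=2$. The hypothesis $b\in B^\infty_0$ gives $|(1+R)b(w)|\lesssim \|b\|_{B^\infty_0}(1-|w|^2)^{-1}$, while $|\overline{w-z}/(w-z)|=1$, $|w-z|\le |1-z\overline w|$, and $|Rf|\le |\p f|$ are elementary. A direct estimate of each of the three terms then yields
\begin{align*}
|\K^{t+2}_0((1+R)b\overline{\p^2 f})(z)| &\lesssim \|b\|_{B^\infty_0}\int_\D \frac{|\p^2 f(w)|(1-|w|^2)^{t+1}}{|1-z\overline w|^{t+2}}d\nu(w),\\
|\K^{t+1}_1((1+R)b\overline{\p f})(z)|+|Q^{t+1}((1+R)b\overline{Rf})(z)| &\lesssim \|b\|_{B^\infty_0}\int_\D \frac{|\p f(w)|(1-|w|^2)^{t}}{|1-z\overline w|^{t+2}}d\nu(w).
\end{align*}
Writing $(1-|w|^2)^{t+1}=(1-|w|^2)^{2-s}(1-|w|^2)^{t+s-1}$ and $(1-|w|^2)^{t}=(1-|w|^2)^{1-s}(1-|w|^2)^{t+s-1}$, I recognize the two bounds as exactly the two pieces (corresponding to the two summands of $\varphi_f$) of a constant multiple of $\|b\|_{B^\infty_0}\PP^{t+s,t+1}(\varphi_f)(z)$, which gives \eqref{eqn:KN1}.

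To pass to \eqref{eqn:KN2}, the key observation is the elementary inequality $(1-|u|^2)^\alpha \le 2^\alpha|1-z\overline w|^\alpha$, valid for $u\in\{z,w\}$ and $\alpha>0$. Applying it with $\alpha=1-s$ to $(1-|z|^2)^{1-s}$, and with $\alpha=s$ to the factor $(1-|w|^2)^s$ built into $(1-|w|^2)^{t+s-1}=(1-|w|^2)^{s}(1-|w|^2)^{t-1}$, the product contributes $|1-z\overline w|^{1-s+s}=|1-z\overline w|$ to the numerator, cancelling one power in the denominator of the kernel of $\PP^{t+s,t+1}$ and leaving precisely the kernel of $\PP^t$. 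Hence
$$
(1-|z|^2)^{1-s}\PP^{t+s,t+1}(\varphi_f)(z)\lesssim \PP^t(\varphi_f)(z).
$$

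To conclude, I would apply Theorem \ref{thm:opBt}: since $\theta\in \B_{p,t}$, the operator $\PP^t$ is bounded on $L^p(\mu_t)$. Finally, Proposition \ref{prop:Bpsk} applied with $k=2$ and $k=1$ gives
$$
\|\varphi_f\|_{L^p(\mu_t)}\le \bigl\||\p^2 f|(1-|\cdot|^2)^{2-s}\bigr\|_{L^p(\mu_t)}+\bigl\||\p f|(1-|\cdot|^2)^{1-s}\bigr\|_{L^p(\mu_t)}\lesssim \|f\|_{B^p_s(\mu_t)}.
$$
Chaining these three estimates yields \eqref{eqn:KN2}. I do not foresee any genuine obstacle: the whole argument is essentially book-keeping of exponents across the three terms of Lemma \ref{lem:KN} and in the reduction to $\PP^t$, with Theorem \ref{thm:opBt} supplying the only non-trivial analytic input.
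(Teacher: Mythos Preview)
Your argument is correct and follows the same route as the paper: apply Lemma~\ref{lem:KN}, use the Bloch bound on $(1+R)b$, recognize the resulting integrals as a $\PP^{N,M}$ operator acting on $\varphi_f$, and then reduce to a B\'ekoll\'e projection to invoke Theorem~\ref{thm:opBt}. The only differences are cosmetic: the paper takes $m\ge 3$ in Lemma~\ref{lem:KN} (getting several $\PP^{t+j+s-1,t+j}$ terms that it then majorizes by $\PP^{t+s,t+1}$), whereas your choice $m=2$ lands directly on $\PP^{t+s,t+1}$; and for \eqref{eqn:KN2} the paper absorbs only the $(1-|z|^2)^{1-s}$ factor to reach $\PP^{t+s}$, while you also absorb $(1-|w|^2)^{s}$ to reach $\PP^{t}$. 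Both reductions are valid, and yours has the mild advantage that the boundedness of $\PP^{t}$ on $L^p(\mu_t)$ is literally the statement of Theorem~\ref{thm:opBt} for $\theta\in\B_{p,t}$, with no need to reinterpret $\mu_t$ as a weighted $\nu_{t+s}$.
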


\begin{proof}
The pointwise estimate \eqref{eqn:KN1} follows from Lemma \ref{lem:KN}. Since $1-|w|^2\le 2|1-z\bar w|$ and $|z-w|\le |1-z\bar w|$,
then for $m\ge 3$, we have
\begin{align*}
|\K^{t+1}&((1+R)b\overline {\p f})(z)| \\
&\lesssim \|b\|_{B^\infty_0}\,\left(\PP^{t+m-2+s,t+m-1}\left( \varphi_f \right)(z) 
+\sum_{j=1}^{m-1} \PP^{t+j+s-1,t+j}\left( \varphi_f \right)(z) \right)\\
&\lesssim \|b\|_{B^\infty_0}\,\PP^{t+s,t+1}\left( \varphi_f \right)(z) .
\end{align*}

In order to prove the $L^p(\mu_t)$-norm estimate \eqref{eqn:KN2}, from \eqref{eqn:KN1} we have 
$$
(1-|z|^2)^{1-s}|\K^{t+1}((1+R)b\overline {\p f})(z)|\lesssim  \|b\|_{B^\infty_0}\,\PP^{t+s}\left( \varphi_f \right)(z) 
$$
and thus
$
\|\PP^{t+s}(\varphi_f)\|_{L^p(\mu_t)}\lesssim \|\varphi_f\|_{L^p(\mu_t)}
\lesssim \|f\|_{B^p_s}(\mu_t),
$
which is a consequence of Theorem \ref{thm:opBt} and Proposition \ref{prop:Bpsk}.
\end{proof}

Now we can prove  \eqref{item:BF2} $\Longrightarrow$ \eqref{item:BF1} in Theorem \ref{thm:BF}.

\begin{prop}
If $b$ satisfies condition \eqref{item:BF2} in Theorem \ref{thm:BF}, then $b\in CB^p_{s}(\mu_t)$.
\end{prop}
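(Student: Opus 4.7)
The plan is to verify the equivalent multiplier characterization of $CB^p_s(\mu_t)$ provided by Remark \ref{rem:CBindepk}. Since $0<s<1$ and $s-1<0$, that remark says $b\in CB^p_s(\mu_t)$ is equivalent to $(l+R)b$ being a bounded pointwise multiplier from $B^p_s(\mu_t)$ to $B^p_{s-1}(\mu_t)=H\cap L^p(\mu_{t+(1-s)p})$ for some (equivalently, any) $l>0$. I choose $l=t+1$, so $(l+R)b=(t+1)R^1_{t+1}b$. By density of $H(\overline\D)$ in $B^p_s(\mu_t)$, it then suffices to prove
\[
\|R^1_{t+1}b\cdot f\|_{L^p(\mu_{t+(1-s)p})}\lesssim \Gamma_2(b)\,\|f\|_{B^p_s(\mu_t)}
\qquad\text{for every }f\in H(\overline\D).
\]
This specific choice of $l$ is the heart of the argument: it aligns the Bergman-projection pairing at level $t+1$ with the hypothesis pairing at level $t$ via Lemma \ref{lem:ptspar}(i).

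I next apply the weighted Cauchy--Pompeiu formula (Theorem \ref{thm:CP}) with $N=t+1$ to the $C^1(\overline\D)$ function $R^1_{t+1}b\cdot\bar f$, obtaining
\[
R^1_{t+1}b(z)\,\overline{f(z)}=\PN^{t+1}\bigl(R^1_{t+1}b\cdot\bar f\bigr)(z)+\K^{t+1}\bigl(R^1_{t+1}b\cdot\overline{\p f}\bigr)(z)=:h_1(z)+h_2(z).
\]
Since $|R^1_{t+1}b\cdot f|=|R^1_{t+1}b\cdot\bar f|$, it is enough to control $\|h_1\|_{L^p(\mu_{t+(1-s)p})}$ and $\|h_2\|_{L^p(\mu_{t+(1-s)p})}$ separately. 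The residual piece $h_2$ is handled by Proposition \ref{prop:KN}: that proposition is stated with $(1+R)b$, but inspection of its proof (via Lemma \ref{lem:KN}) shows that the only property of the holomorphic factor that is used is the pointwise bound $|(1+R)b(w)|\lesssim\|b\|_{B^\infty_0}(1-|w|^2)^{-1}$; the analogous bound for $R^1_{t+1}b$ follows from Corollary \ref{cor:Bloch} ($b\in B^\infty_0$) and the identity $R^1_{t+1}b=\tfrac{1}{t+1}\bigl[(1+R)b+tb\bigr]$ together with the standard logarithmic growth of Bloch functions. Lemma \ref{lem:BinG3} gives $\|b\|_{B^\infty_0}\lesssim\Gamma_2(b)$, so $\|h_2\|_{L^p(\mu_{t+(1-s)p})}\lesssim\Gamma_2(b)\|f\|_{B^p_s(\mu_t)}$.

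For the holomorphic piece $h_1$, I use duality. Corollary \ref{cor:dualB} applied with $t'=t+1$ gives $(B^p_{s-1}(\mu_t))'\equiv B^{p'}_{-s}(\mu^\prime_t)$ under the pairing $\langle\cdot,\cdot\rangle_{t+1}$. For any test function $g\in H(\overline\D)$, Fubini together with the reproducing identity $\PN^{t+1}g=g$ yields
\[
\langle h_1,g\rangle_{t+1}=\int_\D R^1_{t+1}b(w)\,\overline{f(w)g(w)}\,d\nu_{t+1}(w)=\langle R^1_{t+1}b,\,fg\rangle_{t+1},
\]
and Lemma \ref{lem:ptspar}(i) collapses this to $\langle b,fg\rangle_t$. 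The hypothesis $\Gamma_2(b)<\infty$ then gives
\[
|\langle h_1,g\rangle_{t+1}|=|\langle fg,b\rangle_t|\le \Gamma_2(b)\,\|f\|_{B^p_s(\mu_t)}\,\|g\|_{B^{p'}_{-s}(\mu^\prime_t)},
\]
and taking the supremum over $g$ yields $\|h_1\|_{B^p_{s-1}(\mu_t)}\lesssim\Gamma_2(b)\|f\|_{B^p_s(\mu_t)}$, completing the proof and providing the quantitative bound $\|b\|_{CB^p_s(\mu_t)}\lesssim\Gamma_2(b)$.

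\emph{The main obstacle} is the simultaneous alignment of indices in the Cauchy--Pompeiu level, in the dual pairing, and in the multiplier operator $(l+R)b$. With the naive choice $l=1$, the dual pairing computation produces an extra term $\langle b,fg\rangle_{t+1}$ coming from $(1+R)b=(t+1)R^1_{t+1}b-tb$, and this term is \emph{not} directly bounded by the hypothesis $\Gamma_2(b)<\infty$ (which controls only $\langle fg,b\rangle_t$) and appears difficult to control independently. The shift to $l=t+1$, made possible by Remark \ref{rem:CBindepk}, removes this obstruction by producing a single pairing identity that matches the hypothesis via Lemma \ref{lem:ptspar}(i).
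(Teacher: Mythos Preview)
Your proof is correct and follows essentially the same route as the paper's: the same Cauchy--Pompeiu decomposition at level $t+1$ applied to $R^1_{t+1}b\cdot\bar f$, the holomorphic piece handled by duality (Corollary \ref{cor:dualB}) together with Lemma \ref{lem:ptspar}(i) to reduce to $\langle fg,b\rangle_t$, and the remainder piece by Proposition \ref{prop:KN}. You are in fact slightly more careful than the paper in two places---invoking Remark \ref{rem:CBindepk} to justify working with $R^1_{t+1}b$ rather than $(1+R)b$, and noting that Proposition \ref{prop:KN} (stated for $(1+R)b$) transfers to $R^1_{t+1}b$ via the Bloch bound---where the paper simply cites the proposition without comment.
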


\begin{proof}
We want to prove that 
$$
\int_\D |f(z)|^p|R^1_{t+1} b(z)|^p (1-|z|^2)^{(1-s)p}d\mu_t(z)\lesssim C_b\|f\|_{B^p_s(\mu_t)}^p.
$$
To do so, by  the Cauchy-Pompeiu's formula in Theorem \ref{thm:CP}, 
\begin{equation}\label{eqn:wcp}
R^1_{t+1}b(z)\overline {f}=\PN^{t+1}(R^1_{t+1} b\overline {f})+\K^{t+1}(R^1_{t+1} b\overline {\p f}),
\end{equation}
we will show that the two terms in the right hand side in  \eqref{eqn:wcp} are both in
 $L^p(\theta d\nu_{(1-s)p+t})$ and that these norms are bounded up to a constant by 
  $\|f\|_{B^p_s(\mu_t)}$.

Since $h=\PN^{t+1}(\overline{f}\,R^1_{t+1} b)$ is a holomorphic function on $\D$, the norm estimate of $h$ is similar to the one for the unweighted case. Indeed, for $g\in H(\overline\D)$ 
Lemma  \ref{lem:ptspar} gives
$$
|\langle h, g\rangle_{t+1}|=|\langle R^1_{t+1}b, fg\rangle_{t+1}|
=|\langle b, fg\rangle_{t}|\le \Gamma_2(b)\|f\|_{B^p_s(\mu_t)}\|g\|_{B^{p'}_{-s}(\mu^\prime_t)}
$$
which, by  Corollary \ref{cor:dualB}, proves that 
$
\|h\|_{L^p(\theta d\nu_{(1-s)p+t})}\le \Gamma_2(b)\|f\|_{B^p_s(\mu_t)}.
$

Using the $L^p(\theta d\nu_{(1-s)p+t})$-norm estimate of  $\K^{t+1}(R^1_tb\overline{\p f})$ given in Proposition \ref{prop:KN}, we conclude the proof.
\end{proof}

\subsection{Proof of Corollary \ref{cor:BFG}} \label{sec:proof BFG}

Using $ B^p_s(\mu_\delta)=B^p_{s+\tau/p}(\mu_{\delta+\tau})$, for $\tau>0$, 
we will deduce the result from Theorem \ref{thm:BF}.

Let $1<p<\infty$, $s_0,s_1\in\R$, $t_0\ge 0$ and $\theta\in\B_{p,t_0}$. Then, for  $t=t_0-s_0-s_1> t_0$ we have 
\begin{align*}
B^p_{s_0}(\mu_{t_0})&=B^p_{s_0+(-s_0-s_1)/p}(\mu_t)=B^p_{s_0/p'-s_1/p}(\mu_t),\quad\text{ and }\\
B^{p'}_{s_1}(\mu^\prime_{t_0})&=B^{p'}_{s_1/p-s_0/p'}(\mu^\prime_t).
\end{align*}

Moreover, since $\langle f g, b\rangle_{t_1}=\langle \PN^t(f g), b\rangle_{t_1}=\langle fg, \PN^{t_1,t}b\rangle_{t}$,
we have 
$$
\frac{|\langle fg, b\rangle_{t_1}|}
{\|f\|_{B^p_{s_0}(\mu_{t_0})}\,\|g\|_{B^{p'}_{s_1}(\mu^\prime_{t_0})}}=
\frac{|\langle fg, \PN^{t_1,t}b\rangle_{t}|}
{\|f\|_{B^p_{s_0/p'-s_1/p}(\mu_t)}\,\|g\|_{B^{p'}_{s_1/p-s_0/p'}(\mu^\prime_t)}}.
$$

Thus, Theorem \ref{thm:BF}, with $0<s:=s_0/p'-s_1/p<1$, gives
$$
\|\PN^{t_1,t} b\|_{CB^p_{s_0/p-s_1/p'}(\mu_t)}
\approx \sup_{0\ne f,g\in H(\overline \D)}\frac{|\langle fg,b\rangle_{t_1}|}{\|f\|_{B^p_{s_0}(\mu_{t_0})}\|g\|_{B^{p'}_{s_1}(\mu_{t_0})}},
$$
which concludes the proof.

\section{Proof of Theorem \ref{thm:predualCBps}}.\label{sec:duality}
\quad\par 

We will determine the predual of $CB^p_s(\mu_t)$ generalizing some results  
for the unweighted case (see for instance \cite{Ro-Wu}, \cite{Ar-Ro-Saw-W2}, 
\cite{Co-Ve} and the references therein).

\subsection{Weak products and the  predual of $CB^p_s(\mu_t)$}

\begin{defn} Given two Banach spaces $X$ and $Y$ of holomorphic functions on $\D$, let  
 $X\odot Y$ be the completion of finite sums $h=\sum_{j=1}^M f_j g_j$, 
 $f_j\in X$, $g_j\in Y$, using the norm
 $$
 \|h\|_{X\odot Y}:=\inf\left\{\sum_{k=1}^N \|\tilde f_k\|_X\|\tilde g_k\|_Y:\,
 \sum_{k=1}^N \tilde f_k\,\tilde g_k=h\right\}.
 $$
\end{defn}

The following well-known proposition, whose proof follows from the own definitions,
 will be used to prove our duality results.

\begin{prop}\label{prop:dualwproduct}
The norm of a linear form $\Lambda$ on $X\odot Y$ coincides with the norm of 
the bilinear form on $X\times Y$ on defined by $\tilde\Lambda(f,g)=\Lambda(fg)$.
\end{prop}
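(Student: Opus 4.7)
The plan is to establish the two inequalities $\|\tilde\Lambda\|_{X\times Y}\le \|\Lambda\|_{(X\odot Y)'}$ and $\|\Lambda\|_{(X\odot Y)'}\le \|\tilde\Lambda\|_{X\times Y}$, where the first norm is the bilinear norm $\sup_{f,g}|\tilde\Lambda(f,g)|/(\|f\|_X\|g\|_Y)$ and the second is the dual norm on the weak product. Both follow directly from the definition of $\|\cdot\|_{X\odot Y}$, so the proof is essentially bookkeeping; the only step requiring any care is the verification that the map $(f,g)\mapsto \Lambda(fg)$ is well-defined as a bilinear form, which is immediate since $fg\in X\odot Y$ by taking the single-term representation.

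For the first inequality, I would fix $f\in X$ and $g\in Y$ with $f,g\neq 0$. The single-term decomposition $fg=f\cdot g$ is admissible in the infimum defining $\|fg\|_{X\odot Y}$, so $\|fg\|_{X\odot Y}\le \|f\|_X\|g\|_Y$. Then
\[
|\tilde\Lambda(f,g)|=|\Lambda(fg)|\le \|\Lambda\|_{(X\odot Y)'}\|fg\|_{X\odot Y}\le \|\Lambda\|_{(X\odot Y)'}\|f\|_X\|g\|_Y,
\]
which gives $\|\tilde\Lambda\|_{X\times Y}\le \|\Lambda\|_{(X\odot Y)'}$.

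For the reverse inequality, I would take $h\in X\odot Y$ and any finite representation $h=\sum_{k=1}^N \tilde f_k\tilde g_k$ with $\tilde f_k\in X$ and $\tilde g_k\in Y$. Linearity of $\Lambda$ and the definition of $\tilde\Lambda$ yield
\[
|\Lambda(h)|=\Bigl|\sum_{k=1}^N \tilde\Lambda(\tilde f_k,\tilde g_k)\Bigr|\le \|\tilde\Lambda\|_{X\times Y}\sum_{k=1}^N \|\tilde f_k\|_X\|\tilde g_k\|_Y.
\]
Taking the infimum over all such representations gives $|\Lambda(h)|\le \|\tilde\Lambda\|_{X\times Y}\|h\|_{X\odot Y}$, and passing from finite sums to the completion by density yields $\|\Lambda\|_{(X\odot Y)'}\le \|\tilde\Lambda\|_{X\times Y}$.

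The only mild obstacle is the density step in the last paragraph: one must check that a bounded linear form on the dense subspace of finite sums extends uniquely to the completion $X\odot Y$, and that the two definitions of norm (one on the dense subspace, one on the completion) agree. This is standard functional analysis (any bounded linear functional on a dense subspace of a Banach space has a unique bounded extension of the same norm), so I would just invoke it. No use of the holomorphic structure or of any of the earlier machinery on Besov spaces is needed for this proposition.
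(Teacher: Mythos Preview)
Your proof is correct and is exactly the standard argument; the paper itself does not give a proof, merely remarking that the proposition ``follows from the own definitions,'' so you have simply written out the two-line bookkeeping that the authors chose to omit.
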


\subsection{Proof of Theorem \ref{thm:predualCBps}}
\begin{proof}
The embedding 
$
i:B^{p'}_{-s}(\mu^\prime_t)\to B^p_s(\mu_t)\odot B^{p'}_{-s}(\mu^\prime_t),
$
shows that any linear form $\Lambda \in (B^p_s(\mu_t)\odot B^{p'}_{-s}(\mu^\prime_t))'$ 
produces a linear form $\Lambda_i=\Lambda\circ i$ \, on $B^{p'}_{-s}(\mu^\prime_t)$, 
which by Proposition \ref{prop:dualB} can be expresed as  
$\Lambda_i(f)=\langle f,b\rangle_t,$ for some $\,b\in B^p_s(\mu_t).$

Consequently, $\Lambda(h)=\langle h,b\rangle_t$ for $h\in H(\overline{\D})$. 
Since $H(\overline{\D})$ is dense in both spaces $B^p_s(\mu_t)$ 
and  $B^{p'}_{-s}(\mu^\prime_t)$, 
then it is also dense in $B^p_s(\mu_t)\odot B^{p'}_{-s}(\mu^\prime_t)$, 
and thus the norm of $\Lambda$ 
coincides with the norm of the bilinear form $(f,g)\to \langle fg,b\rangle_t$ 
on $B^p_s(\mu_t)\times  B^{p'}_{-s}(\mu^\prime_t)$.
 Therefore, the equivalence between \eqref{item:BF1} and \eqref{item:BF2} 
 in Theorem \ref{thm:BF} concludes the proof.

The same arguments used in the first part show that the norm of a linear 
form $\Lambda$ on $B^p_{s_0}(\mu_t)\odot B^{p'}_{s_1}(\mu^\prime_t)$ 
is equivalent to the norm of the bilinear form 
$(f,g)\to \langle fg,b\rangle_t$, where $b\in B^p_{-s_1}(\mu_t)$.
By Theorem \ref{thm:normG3} this norm is equivalent to $\|b\|_{B^\infty_{-s_0-s_1}}$ 
which proves the first statement.

The second statement follows from the computation by duality of the norms  
$\|h\|_{B^1_{s_0+s_1-t}}$ and  $\|h\|_{B^p_{s_0}(\mu_t)\odot B^{p'}_{s_1}(\mu^\prime_t)}$.   
Indeed, if $h\in H(\overline \D)$, then 
\begin{align*}
\|h\|_{B^1_{s_0+s_1-t}}
&\approx \sup_{0\ne b\in B^\infty_{-s_0-s_1}}
\frac{|\langle h,b\rangle_t|}{\|b\|_{B^\infty_{-s_0-s_1}}}
\approx \|h\|_{B^p_{s_0}(\mu_t)\odot B^{p'}_{s_1}(\mu^\prime_t)}.
\end{align*}
Since $h\in H(\overline \D)$ is dense in both spaces, we obtain the result.
\end{proof}

\subsection{Further remarks} 

Combining Theorem \ref{thm:predualCBps} with \eqref{eqn:correind} we can obtain 
characterizations of weak products of type $B^{p}_{s_0}(\mu_t)\odot B^{p'}_{s_1}(\mu_t)$ which generalize some of the results stated in Section 5 in \cite{Co-Ve}.

For instance, if $0<s<p$, then
\begin{align*}
\left(B^{p}_{0}(\mu_t)\odot B^{p'}_{-s}(\mu_t)\right)'
=\left(B^{p}_{s/p}(\mu_{t+s})\odot B^{p'}_{-s/p}(\mu_{t+s})\right)'\equiv CB^p_{s/p}(\mu_{t+s})=CB^p_0(\mu_t),
\end{align*} 
with respect to the pairing $\langle\cdot,\cdot\rangle_{t+s}$.

Observe that in the particular case $p=2$ and $t=0$, we have $CB^2_0=BMOA\equiv (H^1_{-s})'$, 
with respect to the pairing $\langle\cdot,\cdot\rangle_{s}$. 
Therefore, the above duality result and the fact that $B^2_s=H^2_s$ give 
$H^2\odot H^{2}_{-s}=H^1_{-s}$.

This unweighted weak factorization result can be generalized to the case $1<p<2$. In this case $B^p_0\subset H^p$, 
and we have that $CB^p_0=F^{\infty,p}_0$, where  $F^{\infty,p}_0$ denotes 
the Triebel-Lizorkin space 
of holomorphic functions on $\D$ such that the measure $d\mu_g(z)=|\p g(z)|^p(1-|z|^2)^{p-1}$ 
is a Carleson measure for $H^p$, that is $\mu_g(T_z)\lesssim (1-|z|^2)$ for any $z\in \D$ 
(see \cite{Ma-Sha}, p.178).
Since $F^{\infty,p}\equiv (F^{1,p'}_{-s})'$, 
with respect to the pairing $\langle\cdot,\cdot\rangle_{s}$, we have 
$
B^{p}_{0}\odot B^{p'}_{-s}=F^{1,p'}_{-s}.
$
Here, $F^{1,p'}_{-s}$ is the Triebel-Lizorkin space of holomorphic functions $g$ on $\D$ satisfying 
$$
\int_\bD \left(\int_{|1-\z \overline w|<1-|w|^2}| g(w)|^{p'}(1-|w|^2)^{sp'-2}d\nu(w)\right)^{1/p'}d\sigma(\z)<\infty.
$$

\end{document}